\documentclass[letterpaper,11pt]{article}

\usepackage[margin=1in]{geometry}  
\usepackage{xspace,exscale,relsize}
\usepackage{fancybox,shadow}
\usepackage{graphicx}
\usepackage{color}
\usepackage{amsthm,amsfonts}
\usepackage{amssymb}
\usepackage{authblk}
\usepackage[title]{appendix}

\usepackage{extarrows}
\usepackage{comment}
\usepackage[noadjust]{cite}

\usepackage{amsmath}
	\makeatletter
	\let\over=\@@over \let\overwithdelims=\@@overwithdelims
	\let\atop=\@@atop \let\atopwithdelims=\@@atopwithdelims
  	\let\above=\@@above \let\abovewithdelims=\@@abovewithdelims
  	\makeatother
\usepackage{rotating}

\usepackage{ifpdf}

\usepackage{subfigure}
\usepackage{psfrag}

\usepackage{prettyref}
\usepackage{enumitem}

\usepackage{tikz}
\usetikzlibrary{arrows}
\tikzstyle{int}=[draw, fill=blue!20, minimum size=2em]
\tikzstyle{dot}=[circle, draw, fill=blue!20, minimum size=2em]
\tikzstyle{init} = [pin edge={to-,thin,black}]

\usepackage[
            CJKbookmarks=true,
            bookmarksnumbered=true,
            bookmarksopen=true,
            colorlinks=true,
            citecolor=red,
            linkcolor=blue,
            anchorcolor=red,
            urlcolor=blue,
            pdfauthor={Hang Du, Cheng Mao, Nike Sun, Yihong Wu, and Jiaming Xu}
            ]{hyperref}

\usepackage[all]{xy}

\usepackage{mathtools}

\usepackage{ifthen}
\newboolean{aos}
\setboolean{aos}{FALSE}

\newcommand{\KL}{\operatorname{KL}}
\newcommand{\Law}{\operatorname{Law}}

\ifx\eqref\undefined
	\newcommand{\eqref}[1]{~(\ref{#1})}
\fi
\ifx\mod\undefined
	\def\mod{\mathop{\rm mod}}
\fi

\usepackage{bm}

\newcommand{\norm}[1]{{\left\Vert #1 \right\Vert}}

\def\exp{\mathop{\rm exp}}

\def\tr{\mathop{\rm tr}}

\def\Var{\mathrm{Var}}

\def\diag{\mathop{\rm diag}}

\newcommand{\dd}{\mathrm d}

\newcommand{\Pbb}{\mathbb P}
\newcommand{\R}{\mathbb R}
\newcommand{\one}{\mathbf 1}

\newcommand{\ip}[2]{\left\langle #1,#2\right\rangle}

\newcommand{\Unif}{\mathrm{Uniform}}

\newcommand{\Expect}{\mathbb{E}}

\newcommand{\E}{\mathbb E}
\newcommand{\Prob}{\mathbb{P}}

\newcommand{\TV}{{\rm TV}}

\newcommand{\diff}{{\rm d}}

\newcommand{\Bern}{\text{Bern}}

\newcommand{\iprod}[2]{\left \langle #1, #2 \right\rangle}
\newcommand{\Iprod}[2]{\langle #1, #2 \rangle}

\definecolor{myblue}{rgb}{.8, .8, 1}
\definecolor{mathblue}{rgb}{0.2472, 0.24, 0.6} 
\definecolor{mathred}{rgb}{0.6, 0.24, 0.442893}
\definecolor{mathyellow}{rgb}{0.6, 0.547014, 0.24}

\newcommand{\red}{\color{red}}
\newcommand{\blue}{\color{blue}}
\newcommand{\nb}[1]{{\sf\blue[#1]}}
\newcommand{\nbr}[1]{{\sf\red[#1]}}
\newcommand{\JX}[1]{{\color{blue}[\textbf{JX:} #1]}}

\newcommand{\tW}{{\tilde{W}}}

\newcommand{\calG}{{\mathcal{G}}}
\newcommand{\calH}{{\mathcal{H}}}

\newrefformat{eq}{(\ref{#1})}
\newrefformat{thm}{Theorem~\ref{#1}}
\newrefformat{th}{Theorem~\ref{#1}}
\newrefformat{chap}{Chapter~\ref{#1}}
\newrefformat{sec}{Section~\ref{#1}}
\newrefformat{seca}{Section~\ref{#1}}
\newrefformat{algo}{Algorithm~\ref{#1}}
\newrefformat{fig}{Fig.~\ref{#1}}
\newrefformat{tab}{Table~\ref{#1}}
\newrefformat{rmk}{Remark~\ref{#1}}
\newrefformat{clm}{Claim~\ref{#1}}
\newrefformat{def}{Definition~\ref{#1}}
\newrefformat{cor}{Corollary~\ref{#1}}
\newrefformat{lmm}{Lemma~\ref{#1}}
\newrefformat{prop}{Proposition~\ref{#1}}
\newrefformat{pr}{Proposition~\ref{#1}}
\newrefformat{app}{Appendix~\ref{#1}}
\newrefformat{apx}{Appendix~\ref{#1}}
\newrefformat{ex}{Example~\ref{#1}}
\newrefformat{exer}{Exercise~\ref{#1}}
\newrefformat{soln}{Solution~\ref{#1}}

\def\unifto{\mathop{{\mskip 3mu plus 2mu minus 1mu%
	\setbox0=\hbox{$\mathchar"3221$}%
	\raise.6ex\copy0\kern-\wd0%
	\lower0.5ex\hbox{$\mathchar"3221$}}\mskip 3mu plus 2mu minus 1mu}}

\ifx\lesssim\undefined
\def\simleq{{{\mskip 3mu plus 2mu minus 1mu%
	\setbox0=\hbox{$\mathchar"013C$}%
	\raise.2ex\copy0\kern-\wd0%
	\lower0.9ex\hbox{$\mathchar"0218$}}\mskip 3mu plus 2mu minus 1mu}}
\else
\def\simleq{\lesssim}
\fi

\ifx\gtrsim\undefined
\def\simgeq{{{\mskip 3mu plus 2mu minus 1mu%
	\setbox0=\hbox{$\mathchar"013E$}%
	\raise.2ex\copy0\kern-\wd0%
	\lower0.9ex\hbox{$\mathchar"0218$}}\mskip 3mu plus 2mu minus 1mu}}
\else
\def\simgeq{\gtrsim}
\fi

\newtheorem{theorem}{Theorem}
\newtheorem{lemma}[theorem]{Lemma}
\newtheorem{corollary}[theorem]{Corollary}

\newtheorem{proposition}[theorem]{Proposition}

\theoremstyle{definition}

\newtheorem{remark}{Remark}

\newif\ifmapx
{\catcode`/=0 \catcode`\\=12/gdef/mkillslash\#1{#1}}
\edef\jobnametmp{\expandafter\string\csname embayes2_apx\endcsname}
\edef\jobnameapx{\expandafter\mkillslash\jobnametmp}
\edef\jobnameexpand{\jobname}
\ifx\jobnameexpand\jobnameapx
\mapxtrue
\else
\mapxfalse
\fi

\newcommand{\polylog}{\mathsf{polylog}}

\renewcommand{\tilde}{\widetilde}

\hypersetup{
	colorlinks   = true, 
	urlcolor     = blue, 
	linkcolor    = blue, 
	citecolor   = red 
}

\newcommand{\HD}[1]{{\color{red}[\textbf{HD:} #1]}}

\newcommand{\ER}{Erd\H{o}s-R\'enyi\ }

\newcommand{\Pw}{\mathbb{P}_{0}}

\newcommand{\PRGG}{\mathbb{P}_{\operatorname{RGG}}}
\newcommand{\PER}{\mathbb{P}_{\operatorname{ER}}}

\begin{document}

	\title{Resolution of the Detection Threshold Conjecture for Random Geometric Graphs in the $d>n$ Regime}
	
	\author{Hang Du, Cheng Mao, Nike Sun, Yihong Wu, and Jiaming Xu\thanks{
	H.\ Du and N.\ Sun are with the Department of Mathematics, Massachusetts Institute of Technology, Cambridge, Massachusetts, USA, \texttt{\{hangdu,nsun\}@mit.edu}.
	C.\ Mao is with the School of Mathematics, Georgia Institute of Technology, Atlanta, Georgia, USA, \texttt{cheng.mao@math.gatech.edu}.
	Y.\ Wu is with the Department of Statistics and Data Science, Yale University, New Haven, Connecticut, USA, \texttt{yihong.wu@yale.edu}.
	J.\ Xu is with The Fuqua School of Business, Duke University, Durham, North Carolina, USA, \texttt{jx77@duke.edu}.
	}}
	
	\date{\today}
	
	\maketitle


\begin{abstract}

A random geometric graph (RGG) is generated by first sampling latent points
$x_1,\ldots,x_n$ independently and uniformly from the unit
sphere in $\R^d$, and then connecting each pair $(i,j)$ if
$\langle x_i,x_j\rangle$ exceeds some threshold $\tau$. We study the sharp detection threshold---the
largest dimension at which the RGG can be statistically distinguished from the
Erd\H{o}s--R\'enyi graph with the same edge density $p$. This threshold is
conjectured to be $d \asymp (nh(p))^3$, where $h(p)=p \log \frac{1}{p} + (1-p) \log \frac{1}{1-p}$ is the binary entropy
function. Previous works proved this conjecture for dense graphs with constant $p$ and, up to
polylogarithmic factors, very sparse graphs with $p=\Theta(1/n)$. In this paper, we prove that
detection is impossible when $d\gg (nh(p))^3$ and $d\ge (1+\epsilon) n$ for any constant $\epsilon>0$, thereby resolving the conjecture in the regime $p\gtrsim n^{-2/3}/\log n$ and improving upon the state of the art in the regime $1/n \ll p \ll n^{-2/3}/\log n$.

The key to our proof is a sharp analysis of the posterior distribution of the
latent points given the observed graph, obtained through an information-theoretic
comparison argument combined with strong log-concavity.
\end{abstract}

\tableofcontents

\section{Introduction}

Random geometric graphs (RGGs) are network models in which edges are generated from latent geometric positions~\cite{penrose2003random,duchemin2023random}. They arise naturally in wireless network models, beginning with the Gilbert disk model~\cite{Gilbert61},
and belong to the broader class of latent space models for relational data~\cite{hoff2002latent,KaurRastelliFrielRaftery2023LPM}. In this paper, we study the high-dimensional hard-thresholded RGG model.

Fix $n,d\in\mathbb N$ and $p\in(0,1)$. Let $x_1,\ldots,x_n$ be independently and uniformly from the unit sphere
from $\mathbb S^{d-1}$, and 
let  $X$ be the
$n\times d$ matrix whose $i$th row is $x_i^\top$,
Choose the threshold
$\tau\equiv\tau(d,p)$ so that
$
\mathbb P(\langle x_1,x_2\rangle\ge \tau)=p
$.
The hard RGG $G(X)$ is the graph on $[n]$ in which distinct vertices $i,j$ are
adjacent if and only if $\langle x_i,x_j\rangle\ge \tau$. We denote its law by $\PRGG$.

A central question for high-dimensional RGGs is whether the latent geometry can
be detected from the observed graph itself. Let $\PER$ denote the law of the
Erd\H{o}s--R\'enyi graph $G(n,p)$. The two models $\PRGG$ and $\PER$ have the
same single-edge marginal distribution, but edges in an RGG are dependent
through the latent positions. Detecting geometry is therefore naturally
formulated as the hypothesis testing problem of distinguishing $\PRGG$ from
$\PER$, measured by the total variation distance
$\operatorname{TV}(\PRGG,\PER)$: As $n\to \infty$, if it tends to zero, no test can asymptotically outperform random guess, whereas if it tends to one, the two models can be distinguished with vanishing error probability.\footnote{Recall that for discrete distributions $P$ and $Q$, the total variation is 
$\TV(P,Q) \equiv \frac{1}{2} \sum_a |P(a)-Q(a)|$ and $1-\TV(P,Q)$ is the minimal total error probability for distinguishing $P$ and $Q$, attained by the likelihood ratio test. In addition, the Kullback-Leibler divergence is defined as 
$\KL(P\|Q) \equiv \sum_a P(a)\log\frac{P(a)}{Q(a)}$ if $P\ll Q$ and $\infty$ otherwise.} The goal is to identify the
detection threshold, the highest dimension, at which the
latent geometry can be statistically detectable from the observed graph.

This detection problem for hard RGG was studied by the seminal work~\cite{bubeck2016testing}, which proved the detection threshold $d \asymp n^3$
for dense graphs with constant $p$. Since then, a substantial body of work has
studied the detection for hard RGGs~\cite{brennan2020phase,liu2022testing,bangachev2024fourier,bangachev2025sandwiching}
and for the so-called soft RGGs with smooth kernels~\cite{liu2023probabilistic,mao2026random}. A major open problem is to determine the detection threshold for hard RGGs in the sparse regime, where the edge density $p=p(n)$ vanishes as $n\to\infty$. 
 This is the focus of the present paper.

The positive direction of the problem is well-understood: the signed triangle count
distinguishes $\PRGG$ from $\PER$ when
$
d\ll n^3 h(p)^3,
$
where $h(p)=p \log \frac{1}{p} + (1-p) \log \frac{1}{1-p}$ is the binary entropy function~\cite{bubeck2016testing,liu2022testing}. Remarkably, this
threshold is conjectured to be information-theoretically optimal. However, on the negative side, the picture is much less complete. It is known that 
$\operatorname{TV}(\PRGG,\PER)\to0$ when
\begin{itemize}
\item $d\gg n^3$ for constant
$p\in(0,1)$~\cite{bubeck2016testing};
\item $d\gg n^3p^2\polylog(n)$ for $1/n\ll p\ll1$~\cite{liu2022testing};
\item $d\gg (\log(n))^{36}$ for
$p=\Theta(1/n)$~\cite{liu2022testing}.
\end{itemize} 
In terms of computational limit, \cite{bangachev2024fourier} proved that  
no test statistic that is a degree-$(\log(n))^{1.1}$ polynomial 
can distinguish the two models with  constant advantage
when $d\ge (np)^{3+\epsilon}$ for any
fixed $\epsilon>0$. 
Nevertheless, whether detection is information-theoretically impossible when
$d\gg (nh(p))^3$ has remained open for $p=o(1)$.

In this paper, we prove this conjectured threshold for $h(p) \gtrsim n^{-2/3}$, i.e., $p\gtrsim n^{-2/3}/\log n$.  

\begin{theorem}[Main result] \label{thm:main-detection}
For any constant $\epsilon>0$, if $\epsilon/n \le p \le 1-\epsilon$, $d \ge (1+\epsilon) n$, and $d\gg (n h(p))^3$,
then 
$$\operatorname{KL}(\PRGG\|\PER)=o(1) .
$$
It then follows from Pinsker's inequality that $\operatorname{TV}(\PRGG,\PER) =o(1)$.
\end{theorem}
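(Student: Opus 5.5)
We will bound $\KL(\PRGG\|\PER)$ through the chain rule over vertices, together with a posterior analysis of the latent points. Reveal the vertices one at a time. Let $G_k$ be the induced graph on $[k]$ and $A_k\in\{0,1\}^{k-1}$ the edges from $k$ to $[k-1]$. Then
\[
\KL(\PRGG\|\PER)=\sum_{k=2}^n \E_{G_{k-1}\sim\PRGG}\,\KL\bigl(\Law(A_k\mid G_{k-1})\,\big\|\,\Bern(p)^{\otimes (k-1)}\bigr).
\]
Conditionally on $X_{k-1}=(x_1,\dots,x_{k-1})$, the vector $A_k$ has the law of the threshold pattern $(\one\{\langle x_i,x_k\rangle\ge\tau\})_{i<k}$. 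By the rotational symmetry of $x_k$, this law depends only on the Gram matrix $X_{k-1}X_{k-1}^\top$. We therefore need the conditional law of this Gram matrix given the graph $G_{k-1}$. The goal is to show that each summand is $o(1/n)$ on average, or more precisely that the sum is $O\bigl(n^3h(p)^3/d\bigr)+o(1)$.

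\textbf{Step 1 (one-vertex KL for a fixed configuration).} For a deterministic near-orthogonal configuration with Gram matrix $I+\Delta$, write $\langle x_i,x_k\rangle\approx g_i/\sqrt d$ with $g\sim N(0,I+\Delta)$. The pattern law is then the law of orthant events of a correlated Gaussian. A second-order expansion in $\Delta$, via the Gaussian chaos/Hermite expansion of the threshold indicator, gives $\KL\lesssim \sum_{i\neq j}\Delta_{ij}^2\,\phi(t)^4/(p(1-p))^2$, where $t$ is the Gaussian threshold with $\bar\Phi(t)=p$. Since $\phi(t)^2/(p(1-p))\asymp h(p)$, this bound is of the order $h(p)^2\sum\Delta_{ij}^2$. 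The linear term must cancel in expectation, which uses $\E\Delta=0$ under the prior. The neglected terms are handled using $d\ge(1+\epsilon)n$: this keeps $X_{k-1}$ well-conditioned, so the Gram matrix is invertible with spectrum bounded away from $0$.

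\textbf{Step 2 (posterior of the Gram matrix).} The key input is that, under the posterior given $G_{k-1}$, the off-diagonal entries satisfy $\E\,\Delta_{ij}^2\approx 1/d$ plus a small graph-driven shift. In addition, the posterior mean of $\Delta_{ij}$ is only of order $h(p)\cdot(\text{graph signal})/\sqrt d$. Summing over $i,j<k$ and $k\le n$ then gives a total of order $n^3h(p)^2\cdot h(p)/d$.

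\textbf{Step 3 (posterior analysis).} We parametrize the points as $x_i=z_i/\|z_i\|$ with $z_i$ Gaussian. The posterior given the graph is the Gaussian prior tilted by the hard constraints. We compare it, via an information-theoretic argument, to a smoothed posterior in which each hard edge indicator is replaced by a soft likelihood. The constraint $d\ge(1+\epsilon)n$ makes the Gaussian part strongly log-concave in the relevant coordinates. Strong log-concavity then yields Brascamp--Lieb/Poincaré bounds, which control posterior fluctuations of $\Delta$ at scale $1/\sqrt d$. The same bounds control the covariance between the edge functionals and $\Delta$, giving the mean shift of Step 2. Finally, a data-processing/KL comparison bounds the cost of replacing the hard model by the soft one.

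\textbf{Main obstacle.} We expect the posterior analysis of Step 3 to be the hardest step. The hard threshold destroys log-concavity, so the comparison with a log-concave surrogate has to be sharp to within $h(p)$ factors, particularly when $p\to0$. The plan is to tilt only along a one-dimensional projection per pair, and to use the Gaussian-tail structure to express the edge likelihood as a log-concave function of $\langle z_i,z_j\rangle$. The indicator $\one\{s\ge\tau\}$ is itself log-concave, so the posterior is log-concave in each inner product. The difficulty is that this fails jointly, and we would first try to handle that by conditioning on all but one point at a time.
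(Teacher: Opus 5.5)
Your outline has the right architecture: a chain rule over vertices, then posterior analysis, with $d\ge(1+\epsilon)n$ supplying good conditioning and strong log-concavity. However, the step that carries the whole difficulty is left open, and the mechanisms you propose for it would not give the $h(p)^3$ scaling.

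As written, Steps 1--2 do not combine to give $n^3h(p)^3/d$. Take a per-configuration bound $\KL\lesssim h(p)^2\sum\Delta_{ij}^2$ and average it over the posterior using convexity of KL. This produces $\E[\Delta_{ij}^2\mid G]\approx 1/d$, hence only $n^3h(p)^2/d$, which is the old $d\gg n^3p^2\polylog(n)$-type bound. To gain the extra $h(p)$ you must use that $\Law(A_k\mid G_{k-1})$ is a mixture. Bounding its KL by $\chi^2$ and expanding in $p$-biased characters gives $\sum_{S}(\E[\eta_{|S|}(x_S)\mid G_{k-1}])^2$, so squared \emph{posterior means} of local interaction functions appear; this is how \eqref{eq:KL} arises.

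Your remark that the linear term cancels because $\E\Delta=0$ under the prior does not apply here. The mixing measure is the posterior, under which $\E[W_{ij}\mid G]\ge\tau=a/\sqrt d$ for every edge. What is needed is the quantitative bound $\E_G\|\E[W\mid G]\|^2\lesssim n^2h(p)/d$ (Theorem~\ref{thm:posterior_mean}). Brascamp--Lieb or Poincar\'e bounds do not by themselves give this: they control fluctuations around the posterior mean, not how far the mean moves. The trivial bound $\E_G\|\E[W\mid G]\|^2\le\E\|W\|^2\approx n^2/(2d)$ misses exactly the factor $h(p)$.

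The paper gets this factor from an information-theoretic transportation inequality. After truncating to $\calH=\{\|I+W\|_{\operatorname{op}}\le4\}$, the matrix $W\mathbf 1_\calH$ is $O(1/d)$-subgaussian, so
$\E\|\E[W\mathbf 1_\calH\mid G]\|^2\lesssim I(W;G)/d\le H(G)/d\le\binom n2 h(p)/d$.
Nothing in your sketch plays the role of this entropy bound, which is where $h(p)$ enters.

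Your ``main obstacle'' rests on a misdiagnosis: the hard threshold does not destroy log-concavity if you work in Gram-matrix coordinates rather than with the $z_i$. For $d>n$ the prior density of $W$ is $\propto\det(I+W)^{(d-n-1)/2}$ on the positive-definite cone. This is log-concave, and $\Theta(d)$-strongly log-concave on the convex set $\calH$ when $d\ge(1+\epsilon)n$. Each edge or non-edge constraint $W_{ij}\ge\tau$ or $W_{ij}<\tau$ is a half-space in these coordinates. Hence the truncated posterior is exactly $c_0d$-strongly log-concave, and no smoothed surrogate, KL comparison, or one-point-at-a-time conditioning is needed.

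This exact log-concavity is what the $|S|=3,4$ terms require. Their main parts $a\zeta^3\mathsf S_{3,2}$ and $\zeta^4\mathsf S_{4,2}$ are controlled by the second moment of the overlap $\langle W,W^*\rangle$. The paper bounds this by $(n^2h(p)/d)^2$ using concentration at scale $n/d$ from strong log-concavity. The crude bound $\E_G(\E[\mathsf S_{4,2}\mid G])^2\lesssim d^{-2}$ gives $n^5h(p)^4/d^2$, which is not $o(1)$ in general; for example, $h(p)=n^{-2/3}$ and $d=n\log n$ give $n^{1/3}/\log^2 n$.

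Your proposal also omits two further pieces. One is the expansions of $\eta_3,\eta_4,\eta_5$ with residual control. The other is the separate crude treatment of $|S|\ge6$, where nearly collinear pairs with $|\langle x,y\rangle|>1-\eta$ must be handled using $d\ge cn$.
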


The \hyperlink{proof.main.thm}{proof of \prettyref{thm:main-detection}} is presented at the end of Section~\ref{s:proof.main.thm}.
In the regime $h(p)\gtrsim n^{-2/3}$, equivalently
$p\gtrsim n^{-2/3}/\log n$, we have $(nh(p))^3\gtrsim n$, so
\prettyref{thm:main-detection} implies the impossibility of detection whenever $
    d\gg (nh(p))^3 $.
Combined with the existing achievability result based on the signed triangle
count~\cite{bubeck2016testing,liu2022testing}, this establishes the conjectured detection threshold in the regime
$h(p)\gtrsim n^{-2/3}$. Extending the proof to $d \le n$, and thereby
establishing the conjectured threshold for sparser graphs with
$h(p)\ll n^{-2/3}$, remains open. See~\prettyref{sec:conclusion} for
discussion of the challenges. 
Nevertheless, in the regime $1/n\ll p\ll n^{-2/3}/\log n$, we have $(nh(p))^3 \ll n$ and thus our impossibility condition becomes $d\ge (1+\epsilon) n$, which still improves the 
state-of-the-art condition $d\gg n^3p^2\polylog(n)$~\cite{liu2022testing}.

\subsection{Notations}
We collect some notations used throughout this paper.  For $x_1,\dots,x_n \in \mathbb{S}^{d-1}$, recall that $X$ denotes the $n\times d$ matrix with rows $x_i^\top$, and that $G=G(X)$ denotes the random geometric graph with edges $(i,j)$ such that $\langle x_i,x_j\rangle\ge\tau$.
Let $\Pw$ denote the prior distribution of $X$. For a graph $G$, denote $\mu_{G}$ as the posterior distribution of $X$ given that $G(X)=G$.
Define the $n\times n$ matrix 
   $W=XX^\top-I_n$, with
     zero diagonal and off-diagonal entries 
     $W_{ij}=\langle x_i,x_j\rangle$ for $i\ne j$.
    For ease of notation, for any two symmetric, zero-diagonal matrices   $W,W^*\in \mathbb{R}^{n\times n}$, we write
 \[
 \langle W,W^*\rangle:=\sum_{1\le i<j\le n} W_{ij} W^*_{ij},
\qquad
\|W \|^2:=\langle W,W\rangle.
\]
For a vector $x$, we also use $\|x \|$ to denote its Euclidean norm.

\section{Proof of Theorem~\ref{thm:main-detection}}
\label{s:proof.main.thm}

The main goal of this section is to present the \hyperlink{proof.main.thm}{proof of \prettyref{thm:main-detection}}. The proof proceeds in three steps.
\begin{itemize}
    \item 
In \prettyref{sec:KL}, we review a KL expansion from \cite{mao2026random}: using the chain rule for conditional entropies, the KL divergence from
 the RGG to the \ER graph can be expanded in terms of the posterior moments of certain \textbf{local interaction functions} of the latent points given the RGG. 
    \item 
    In \prettyref{sec:etak}, we derive expansions of these local interaction functions as polynomials with respect to inner products of the input vectors; see \prettyref{thm:taylor-eta2345}. 
    This reduces the problem to analyzing moments of the posterior overlap 
$\Iprod{W}{W^*}$, where $W,W^*$ are two independent replicas drawn from the posterior law $\mu_G$.

    \item In 
    \prettyref{sec:posterior-main} we control the posterior moments.     
    The first moment of the posterior overlap is bounded using an \textbf{information-theoretic} argument in terms of the entropy of the RGG graph; see \prettyref{thm:posterior_mean}.
 To bound higher moments, 
we crucially exploit the \textbf{strong log-concavity} of the prior and the posterior; see \prettyref{thm:posterior-overlap-moments}.
Specifically, the posterior of $W$ given $G$, after appropriate truncation, is strongly log-concave with curvature $\Theta(d)$. The concentration inequality for strongly log-concave distributions then allows us to suitably control the higher moments. We remark that this is the part where $d>(1+\epsilon)n$ becomes essential. 

\end{itemize}



\subsection{KL expansion}
\label{sec:KL}
To bound the joint KL divergence between the RGG and $G(n,p)$, applying the chain rule one node at a time yields
\[
\KL(\PRGG\|\PER) = \sum_{k=2}^{n}
\Expect_{\text{RGG}}[\KL(\Prob_{A_{k1},\ldots,A_{k,k-1}|A^{k-1}}\|\Bern(p)^{\otimes (k-1)})]
\]
where $A=(A_{ij})$ is the adjacency matrix of the RGG and $A^{k-1}$ is that of the subgraph induced by the first $k-1$ nodes.
Recognizing the conditional law \[\Prob_{A_{k1},\ldots,A_{k,k-1}|A^{k-1}}\] as a mixture of Bernoulli products and bounding each conditional KL by the $\chi^2$-divergence, \cite[Lemma~5]{mao2026random} showed the upper bound
\begin{equation}\label{eq:KL}
\KL(\PRGG\|\PER)\le \sum_{k=2}^{n-1}\binom{n}{k+1}g(k)\,,
\end{equation}
where $g(k)$ is defined by
\begin{equation}
g(k) \triangleq \mathbb{E}_{G\sim \PRGG}\left[\left(\mathbb{E}_{X\sim \mu_G}\big[\eta_k(x_1,\dots,x_k)\big]\right)^2\right]\,,
\label{eq:gk}
\end{equation}
for $\eta_k:(\mathbb{S}^{d-1})^k\to\mathbb{R}$ defined by
\begin{equation}\label{eq:eta-def}
\eta_k(x_1,\dots,x_k)\triangleq\mathbb{E}_{x\sim \operatorname{Unif}(\mathbb{S}^{d-1})}\left[\prod_{i=1}^k\kappa(\langle x,x_i\rangle)\right]\,, 
\quad \kappa(t) \triangleq \frac{\mathbf{1}\{t\ge \tau\}-p}{\sqrt{p(1-p)}}\,.
\end{equation}
We refer to the $\eta_k$ as \textbf{local interaction functions}, since each $\eta_k$ depends on the Gram matrix $(W_{ij})_{1\le i<j\le k}$ of $x_1,\ldots,x_k$; and we will see that the main contribution turns out to come from small $k$. Nevertheless, we note that this does not reduce the calculation to a RGG with $k$ nodes since each $g(k)$ still requires averaging over the posterior given the full RGG.

\subsection{Estimates on local interaction function}
\label{sec:etak}

It turns out that to yield the desired threshold in \prettyref{thm:main-detection} for $d\gtrsim n$, we only need precise control of the local interaction functions $\eta_k$ for $k \leq 5$, while a crude bound suffices for higher-order terms. To this end, define
\begin{equation}\label{eq:delta-def}
\delta_k = \delta_k(x_1, \dots, x_k)
\triangleq
\frac{1}{d^{1/2}}+\max_{1 \le i<j \le k}|W_{ij}|\,;
\end{equation}
this is a quantity we use to control the remainder terms in the Taylor expansion of $\eta_k$. 
For $x,x'$ drawn independently and uniformly from $S^{d-1}$, define
\begin{align}
T=\sqrt{d} \iprod{x}{x'}, \quad a= \tau \sqrt{d}, \quad \text{ and } \quad \zeta= \bigg( \frac{p}{1-p}\bigg)^{1/2}   \Expect[T | T\geq a]\,.
\label{eq:zeta-def}
\end{align}
We have 
$1\le a^2\lesssim \log (1/p)$ and $\zeta^2 \asymp  pa^2
\lesssim h(p)$
(cf.~\prettyref{lmm:m1m2}).


\begin{theorem}
\label{thm:taylor-eta2345}
There exist numerical constants $c,C>0$ such that the following holds. For $2\le k\le 5$, define the functions
\begin{align*}
E_2(x_1,x_2) 
&\triangleq \zeta^2\mathsf{S}_{2,1}\,,\\
E_3(x_1,x_2,x_3)
&\triangleq (1-1/d) a\zeta^3 \mathsf{S}_{3,2}\,,\\
E_4(x_1,\ldots,x_4)
&\triangleq \zeta^4 \mathsf{S}_{4,2}\,,\\
E_5(x_1,\ldots,x_5)
&\triangleq (1-1/d)a\zeta^5 \mathsf{S}_{5,3}\,,
\end{align*}
where $\mathsf{S}_{v,e}$ is the weighted sum over all graphs on vertex set $[v]$ with $e$ edges, such that each vertex is covered by at least one edge:
\begin{align*}
\mathsf{S}_{2,1} &= W_{12}\,,\\
\mathsf{S}_{3,2} &= W_{12} W_{13}
    +W_{12} W_{23}+W_{13} W_{23}\,,\\
\mathsf{S}_{4,2} &= W_{12} W_{34}
+
W_{13} W_{24}
+
W_{14} W_{23}\,,\\
\mathsf{S}_{5,3} &= \sum_{h=1}^5
\sum_{\{i,j\}\subset [5]\setminus\{h\}}
W_{hi}W_{hj}W_{uv},
\qquad
\{u,v\}=[5]\setminus\{h,i,j\}\,.
\end{align*}
Define the residuals $R_k\equiv \eta_k-E_k$. On the event $a^2\delta_k \le c$, 
    \[|R_k| \le C \left( \frac{p}{1-p}\right)^{k/2} (a^2\delta_k)^{\lceil k/2\rceil +1}\]
for all $2\le k\le 5$.
Additionally, we have
$$
|\eta_5(x_1,\ldots,x_5)|
\le C \left(\frac{p}{1-p}\right)^{5/2}(a^2\delta_5)^3
$$ as an overall bound for $k=5$.
\end{theorem}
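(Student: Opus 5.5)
Condition on the Gram matrix of $x_1,\dots,x_k$. Write the uniform point $x$ as $x=\sum_i y_i u_i+\sqrt{1-\|y\|^2}\,z$, where $(u_i)$ is an orthonormal basis of $\mathrm{span}(x_1,\dots,x_k)$ obtained by Gram--Schmidt and $z$ is uniform on the orthogonal sphere. The vector $\sqrt d\,(\langle x,x_1\rangle,\dots,\langle x,x_k\rangle)$ is then a linear image $L\,(\sqrt d\,y)$ of the first $k$ coordinates of a uniform point, and the covariance of the image is $\Sigma=I_k+W^{(k)}$ with $W^{(k)}=(W_{ij})_{i,j\le k}$. Hence $\eta_k$ is the integral of $\prod_i\kappa(\cdot)$ against an explicit density $f_\Sigma$ on $\R^k$. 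This density is proportional to $(1-\|\Sigma^{-1/2}t\|^2/d)_+^{(d-k-2)/2}\det\Sigma^{-1/2}$, and it is a perturbation of the product of the one-dimensional marginal densities $\phi_d$ of $T$.

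The strategy is to expand $f_\Sigma(t)/\prod_i\phi_d(t_i)$ in the off-diagonal entries $W_{ij}$. This is the multivariate analogue of the Mehler/Hermite expansion:
\[
\frac{f_\Sigma(t)}{\prod_i\phi_d(t_i)}=\sum_{m}\ \prod_{i<j}\frac{W_{ij}^{m_{ij}}}{m_{ij}!}\,\prod_i H^{(d)}_{\deg_i(m)}(t_i)+\text{(remainder)},
\]
where the $H^{(d)}_\ell$ are the Gegenbauer-type orthogonal polynomials for $\phi_d$, normalized to agree with Hermite polynomials up to $O(1/d)$ corrections. Since $\E\kappa(T)=0$, any multigraph $m$ that leaves some vertex isolated contributes zero. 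Integrating $\kappa(t_i)H_\ell(t_i)$ gives $\E[\kappa(T)H_\ell(T)]$. For $\ell=1$ this is exactly $\zeta$; for $\ell=2$ it is $(p/(1-p))^{1/2}\,\E[T^2-1\mid T\ge a]\cdot(1+O(1/d))$, which is $\zeta a(1+O(a^{-2}))$ in the relevant form. This is where the factor $(1-1/d)a\zeta^3$ should come from, using the exact Gegenbauer normalization and the paper's identity for $\E[T^2\mid T\ge a]$ in \prettyref{lmm:m1m2}. In general I expect $|\E[\kappa H_\ell]|\lesssim (p/(1-p))^{1/2}a^\ell$, which I will prove via the tail bound $\E[|T|^\ell\mid T\ge a]\lesssim a^\ell$.

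Grouping terms by the number of edges $e=|m|$: each multigraph contributes $\prod_{ij}|W_{ij}|^{m_{ij}}\prod_i a^{\deg_i}\cdot (p/(1-p))^{k/2}\le (p/(1-p))^{k/2}(a^2\delta_k)^{e}$, since $\sum_i\deg_i=2e$. Covering all $k$ vertices requires $e\ge\lceil k/2\rceil$. The minimal terms $e=\lceil k/2\rceil$ are exactly the simple graphs listed in $\mathsf S_{k,e}$, with degrees all $1$ for even $k$ and one vertex of degree $2$ for odd $k$, reproducing $E_k$. Every remaining multigraph has $e\ge\lceil k/2\rceil+1$, and the number of multigraphs with $e$ edges grows only exponentially. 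So on $a^2\delta_k\le c$ the geometric series sums to the claimed bound on $R_k$. The additive $d^{-1/2}$ in $\delta_k$ absorbs the $O(1/d)$ discrepancies between the finite-$d$ polynomials, or the $\det\Sigma$ and $(1-\cdot/d)$ factors, and their Gaussian counterparts, because $1/d\le(d^{-1/2})^2$. For the overall bound on $\eta_5$ when $a^2\delta_5>c$, I will use $|\eta_5|\le \E\prod|\kappa|$ together with Hölder/Cauchy--Schwarz: $\|\kappa\|_{L^5}^5\lesssim (p/(1-p))^{5/2}p^{-3/2}$, and since $(a^2\delta_5)^3\ge c^3$ we need a bound of order $(p/(1-p))^{5/2}$. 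I would instead bound by partial integration: conditionally on the event that at most three of the $\langle x,x_i\rangle$ exceed $\tau$, and using positivity of the density, get $\E\prod|\kappa|\lesssim (p/(1-p))^{5/2}$ up to constants. For $p\le1-\epsilon$, large $\delta$ is handled crudely.

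The main obstacle is the rigorous control of the remainder in the expansion. The density ratio is not an absolutely convergent Hermite series uniformly in $t$; $\kappa$ is an indicator, and the relevant $t_i$ sit near $a\sim\sqrt{\log(1/p)}$. I would first try a direct Taylor expansion in $W$ with exact integral remainder. Write $f_\Sigma(t)=\prod_i\phi_d(t_i)\exp(\Psi_W(t))$, where $\Psi_W$ is explicit. Then bound the remainder pointwise on the region $|t_i|\lesssim a+O(1)$, using $|\Psi_W|\lesssim a^2\delta_k$ there. The tails are handled via the Gaussian-like decay of $\phi_d$ conditioned on $T\ge a$, which concentrates within $O(1/a)$ of $a$.
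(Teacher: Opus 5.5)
Your identification of the leading terms is correct and matches the paper. Centering of $\kappa$ kills every monomial that misses a coordinate, and the minimal covering graphs give $\mathsf S_{k,\lceil k/2\rceil}$. One has exactly $\E[\kappa(T)T]=\zeta$ and $\E[\kappa(T)(T^2-1)]=m_2/\sqrt{p(1-p)}=(1-1/d)a\zeta$, with no $(1+O(1/d))$ or $(1+O(a^{-2}))$ fudge factor.

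The first genuine gap is the bookkeeping of finite-$d$ effects. The product-form expansion you write is exact only for Gaussians. On the sphere, and equally in your fallback Taylor expansion in $W$ at fixed $d$, the base point $W=0$ is not a product measure: orthonormal $x_i$ give uncorrelated but dependent $T_i$ (e.g.\ $\E[(T_1^2-1)(T_2^2-1)]=-2/(d+2)$). You justify absorbing these corrections ``because $1/d\le(d^{-1/2})^2$''. That buys only two factors of $a^2\delta_k$: an order-$1/d$ term has $t$-degree at most $4$, so it costs $a^4/d\le(a^2\delta_k)^2$. But $R_k$ must be of order $(a^2\delta_k)^3$ for $k=3,4$ and $(a^2\delta_k)^4$ for $k=5$. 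For instance, an order-$1/d$ term surviving centering in $\eta_3$ would have size $p^{3/2}a^4/d$, exceeding $p^{3/2}(a^2\delta_3)^3$ by the factor $\sqrt d/a^2\ge 1/c$ when $\delta_3\asymp d^{-1/2}$. What is needed, and missing, is a structural fact with two parts:
(i) the order-$1/d$ part of the log density ratio, $-\frac{1}{4d}(\|t\|^4-\sum_i t_i^4)+\frac{k-1}{2d}\|t\|^2-\frac{k(k-1)}{4d}$, involves at most two coordinates;
(ii) for $k=5$, every correction of combined order at most three in $(W,d^{-1/2})$, other than $B(t)^3/6$ with $B(t)=\sum_{i<j}W_{ij}t_it_j$, involves at most four coordinates.
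Together these mean centering annihilates all such terms. The paper obtains this by interpolating $H_\theta=I+\theta W$ and $D_\theta=d/\theta^2$ simultaneously. Then $L_0\equiv 1$, each $1/d$ enters as $\theta^2/d$, and the $\theta^j$-coefficient of $\log L_\theta$ is a polynomial of $t$-degree at most $2j$. The explicit coefficients $B_2,B_3$ then show exactly which terms can involve all $k$ coordinates.

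The second gap is the remainder region. You bound the Taylor remainder pointwise on $\{|t_i|\lesssim a+O(1)\}$ and discard the complement using concentration of $T$ given $T\ge a$. This gains only a factor $e^{-O(a)}$: the complement has $\prod_i|K(t_i)|$-weighted $\nu_H$-mass of order $p^k e^{-O(a)}$. The target $p^k(a^2\delta_k)^{\lceil k/2\rceil+1}$ can be far smaller, e.g.\ when $\delta_k\asymp d^{-1/2}$ with $d$ large. Widening the window does not rescue this, because the remainder is then no longer uniformly $O((a^2\delta_k)^{\lceil k/2\rceil+1})$ on it. The paper instead proceeds in three steps:
(a) prove $|\partial_\theta^j\log L_\theta(t)|\le C(\delta U(t))^j$, with $U(t)=1+\|t\|^2$, on the large region $U\le c_0/\delta$;
(b) integrate the resulting pointwise remainder against $\prod_i|K(t_i)|$ using $\E[|K(T)|(1+T^2)^m]\le C_m p a^{2m}$;
(c) use crude mass bounds only on $U>c_0/\delta$, where the mass is $O(e^{-c/\delta})$. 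This is negligible because $1/\delta\gtrsim a^2/c\gtrsim \log(1/p)/c$, and there the Taylor polynomial is controlled by $|L_M|\lesssim U^M$.

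Finally, you have misread the last assertion. The overall bound on $\eta_5$ is meant on the event $a^2\delta_5\le c$, and it is false off that event. For small $p$ and $x_1=\dots=x_5$, one has $\eta_5=\E[\kappa(T)^5]=((1-p)^4-p^4)/(p(1-p))^{3/2}\asymp p^{-3/2}$. The right-hand side is $O(p^{5/2}\log^3(1/p))$, so your proposed estimate $\E\prod_i|\kappa(T_i)|\lesssim (p/(1-p))^{5/2}$ cannot hold. On the event, the bound is immediate: $|E_5|\lesssim a\zeta^5\delta_5^3\lesssim p^{5/2}(a^2\delta_5)^3$ since $\zeta\lesssim a\sqrt p$, and $|R_5|$ is smaller still. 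The paper equivalently takes $M=2$, for which the main term vanishes.
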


The \hyperlink{proof:t.taylor-eta2345}{proof of \prettyref{thm:taylor-eta2345}} relies on calculations specific to the hard RGG model, and is postponed to \prettyref{app:etak}. To provide some intuition, we give here a \textbf{heuristic} derivation of the expansion for $\eta_2$: for $T_i=d^{1/2}\langle x,x_i\rangle$, we will use the approximation that $(T_1,T_2)$ is bivariate normal with mean zero, $\Var(T_i)=1$, and covariance $\E(T_1 T_2)=W_{12}$. As a result, the distribution of $T_2$ conditional on $T_1$ is roughly $W_{12} T_1 + (1-(W_{12})^2)^{1/2} Z$, where $Z$ is a standard gaussian independent of $T_1$. Thus, if $\Psi(x) \equiv \mathbb{P}(Z\ge x)$ denotes the complementary gaussian cumulative distribution, we can use a Taylor expansion to approximate
    \[
    \mathbb{P}(T_2\ge a \,|\, T_1)-p
    \approx \Psi\bigg(
    \frac{a-W_{12}T_1}{(1-(W_{12})^2)^{1/2}}\bigg)
    -p
    \approx \varphi(a)W_{12} T_1
    \asymp p a W_{12} T_1\,.
    \]
Taking expectation over $T_1$ gives
    \[
    \mathbb{P}(T_2\ge a \,|\, T_1\ge a)-p
    \asymp pa W_{12} \E(T_1\,|\,T_1\ge a)
    \asymp p^{1/2} a W_{12} \zeta
    \asymp \zeta^2 W_{12}\,,
    \]
recalling that $\zeta\asymp p^{1/2}a$. Altogether this heuristic derivation gives
    \[
    \eta_2(x_1,x_2)
    =\frac{\mathbb{P}(T_1\ge a,T_2\ge a)-p^2}{p(1-p)}
    = \frac{\mathbb{P}(T_2\ge a \,|\, T_1\ge a)-p}{1-p}
    \asymp \zeta^2 W_{12}\,,
    \]
which is consistent with the assertion of Theorem~\ref{thm:taylor-eta2345}. For the rigorous proof we work with the spherical distributions rather than the gaussian approximation, and use an interpolation argument to control the residual term; these arguments are deferred to \prettyref{app:etak}.


\subsection{Posterior overlap and concentration}
\label{sec:posterior-main}

Thanks to  \prettyref{thm:taylor-eta2345}, each $g(k)$ in \prettyref{eq:gk} 
can be approximated by certain posterior moments of $W$ under $\mu_G$. For example, the expansion of $\eta_2$ from \prettyref{thm:taylor-eta2345} tells us that
    \begin{align*}
    g(2) &\approx \zeta^4
    \Expect_{G\sim \PRGG}
    \Big[(\Expect[W_{12}|G])^2\Big] 
    = \frac{\zeta^4}{\binom{n}{2}}
    \Expect_{G\sim \PRGG}
    \Big[\|\Expect[W|G]\|^2\Big] \\
    &= \frac{\zeta^4}{\binom{n}{2}}
    \Expect_{G\sim \PRGG}\Big[
    \Expect_{(W,W^*)\sim(\mu_G)^{\otimes2}}
    [\langle W,W^* \rangle]
    \Big]
    \,,
    \end{align*}
where the first equality holds by symmetry.
This quantity is bounded by the following result.

\begin{theorem}\label{thm:posterior_mean}
If $d\ge (1+\epsilon)n$ for any constant $\epsilon>0$ and 
    $nh(p) \geq 1$, then  
    \[\Expect_{G\sim \PRGG}\Big[
        \|\mathbb{E}_{X \sim \mu_G}[W]\|^2\Big]
    \leq C \frac{n^2h(p)}{d} 
     +\frac1{n^{\omega(1)}} \,,
    \]
    where $C$ is a constant depending only on $\epsilon$.
\end{theorem}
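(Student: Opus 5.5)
Our plan is an information-theoretic comparison: we bound the posterior mean overlap by the mutual information between the latent points and the graph. That mutual information is at most the entropy of $G$, which is at most $\binom n2 h(p)$. Write $\widehat W=\mathbb{E}_{\mu_G}[W]$ and note that $\mathbb{E}_{\PRGG}\|\widehat W\|^2=\mathbb{E}\|W\|^2-\mathbb{E}\|W-\widehat W\|^2$, where $\mathbb{E}\|W\|^2=\binom n2/d$. It therefore suffices to show that the posterior MMSE of $W$ given $G$ is close to the prior variance, with deficit $O(n^2h(p)/d)$.

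The route goes through a Gaussian-channel / I-MMSE style comparison, implemented via a change of measure rather than through an actual channel. We introduce the tilted family $\mathbb{Q}_\lambda(dX)\propto \exp(\lambda d\langle W,V\rangle)\,\Pw(dX)$ with $V=\widehat W$ (or a normalized version). The Gibbs variational principle, i.e. the Donsker--Varadhan inequality applied conditionally on $G$, then gives
\[
\lambda d\,\mathbb{E}\langle W,\widehat W\rangle\le I(X;G)+\mathbb{E}\log \mathbb{E}_{\Pw}\exp(\lambda d\langle W,\widehat W\rangle),
\]
with $I(X;G)=H(G)\le\binom n2 h(p)$. Here $\widehat W$ is $G$-measurable, and we average the tilt over $G$ as follows. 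Apply DV to the joint law of $(X,G)$ versus $\Pw\otimes \mathrm{Law}(G)$, with test function $f(X,G)=\lambda d\langle W,\widehat W(G)\rangle$. This yields exactly the display above, whose left side equals $\lambda d\,\mathbb{E}\|\widehat W\|^2$.

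The key remaining step is a prior log-moment-generating bound: for deterministic symmetric zero-diagonal $V$ with $\|V\|$ in the relevant range, we need $\log\mathbb{E}_{\Pw}\exp(\lambda d\langle W,V\rangle)\le C\lambda^2 d\|V\|^2$ for $\lambda\le c$. This is where $d\ge(1+\epsilon)n$ enters. We write $\langle W,V\rangle=\frac12\operatorname{tr}(X^\top V X)$, a quadratic form in the sphere-uniform rows. Approximating $X\approx Z/\sqrt d$ with $Z$ Gaussian, the MGF is $\det(I-\lambda V)^{-d/2}$ per column, which is finite with the quadratic bound whenever $\lambda\|V\|_{op}\le 1/2$. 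We must control the operator norm, not just the Frobenius norm. Alternatively we use strong log-concavity of the law of $W$ (or of $X$ with the spherical constraint handled via Gaussian radial decoupling). The condition $d>(1+\epsilon)n$ makes the Wishart-type density of $XX^\top$ strongly log-concave with curvature $\asymp d$, giving a sub-Gaussian bound $\log\mathbb{E}e^{t\langle W,V\rangle}\le Ct^2\|V\|^2/d$ for linear functionals of $W$.

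With that in hand we set $V=\widehat W$ and optimize, obtaining $\lambda d\,\mathbb{E}\|\widehat W\|^2\le \binom n2 h(p)+C\lambda^2 d\,\mathbb{E}\|\widehat W\|^2$. Taking $\lambda=1/(2C)$ gives $\mathbb{E}\|\widehat W\|^2\le C' n^2h(p)/d$. The $n^{-\omega(1)}$ term accounts for a truncation: the log-concavity/MGF bound holds only after restricting to a high-probability event, such as $\|W\|_{op}\lesssim\sqrt{n/d}$, or entries $|W_{ij}|\lesssim\sqrt{\log n/d}$. The complement contributes superpolynomially small amounts, since $\|\widehat W\|^2\le n^2$ trivially. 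The assumption $nh(p)\ge1$ ensures that the main term dominates any lower-order constants from the truncation. I expect the main obstacle to be the rigorous MGF/strong log-concavity bound for $\langle W,V\rangle$ under the spherical prior with constant depending only on $\epsilon$, uniformly over $G$-measurable $V$. This requires handling the operator-norm constraint $\lambda\|V\|_{op}\lesssim 1$, likely via the truncation and the fact that $\|\widehat W\|_{op}\le \mathbb{E}_{\mu_G}\|W\|_{op}$.
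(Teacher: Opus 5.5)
Your plan is the paper's proof with the transportation lemma unpacked, and it works. The paper also truncates to the convex event $\mathcal H=\{\|I_n+W\|_{\operatorname{op}}\le 4\}$, shows $W\mathbf 1_{\mathcal H}$ is $O(1/d)$-subgaussian under $\Pw$, and applies Corollary~\ref{c:transportation} (Bobkov--G\"otze, i.e.\ Donsker--Varadhan optimized over $\lambda$ for each fixed $G$) with $I(W\mathbf 1_{\mathcal H};G)\le H(G)\le\binom n2 h(p)$.

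The real difference is how the subgaussian lemma is proved. The paper gets $\E_{\Pw}\exp\langle U,W\mathbf 1_{\mathcal H}\rangle\le 2\exp(2\|U\|^2/d)$ by peeling off one vertex at a time. Given $x_1,\dots,x_{n-1}$, the new term is a linear form in the spherical vector $x_n$, with proxy $u_n^\top(I+W_{n-1})u_n/d\le 4\|u_n\|^2/d$ on $\mathcal H_{n-1}$. It then needs Lemma~\ref{l:subgaus.with.constant} and sign-flip symmetry to obtain a genuine mean-zero subgaussian vector.

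Your log-concavity route also works. On $\mathcal H$ the density $\propto\det(I+W)^{(d-n-1)/2}$ has curvature at least $(d-n-1)/16\gtrsim_\epsilon d$ (Lemma~\ref{l:prior.W.logconcave}). Herbst's argument then gives $\E_{\Pw}e^{t\langle W\mathbf 1_{\mathcal H},V\rangle}\le 1+e^{Ct^2\|V\|^2/d}$ for all $t$ and all $V$, with no condition on $\|V\|_{\operatorname{op}}$. This uses $\E_{\Pw}[W\mid\mathcal H]=0$, which holds by the sign flips $X\mapsto D_{\mathfrak s}X$; you should state it. Because you run DV with one fixed $\lambda$, the additive $\log 2$ is absorbed using $nh(p)\ge 1$, so Lemma~\ref{l:subgaus.with.constant} is unnecessary. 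Your version is more economical, since it reuses the log-concavity the paper needs anyway for Theorem~\ref{thm:posterior-overlap-moments}. The peeling argument is more elementary and avoids the Wishart-type density.

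Two points in your write-up need correcting.

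First, your display comes from DV applied to $\mu_G$ versus $\Pw$ for each fixed $G$, then averaged. Applying DV to the joint law versus $\Pw\otimes\Law(G)$ instead puts $\log\E_G\exp(C\lambda^2 d\|\widehat W\|^2)$ on the right. Jensen goes the wrong way there, so it cannot be reduced to $C\lambda^2 d\,\E\|\widehat W\|^2$ and the argument does not close. The joint version is therefore not ``exactly the display''.

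Second, the truncation must sit on $W$ inside the test function, not on $V$. Take $V_G=\E_{\mu_G}[W\mathbf 1_{\mathcal H}]$ and $f=\lambda d\langle W\mathbf 1_{\mathcal H},V_G\rangle$. Bound the remainder by $\E_G\|\E_{\mu_G}[W\mathbf 1_{\mathcal H^c}]\|^2\le \E_{\Pw}[\|W\|^2\mathbf 1_{\mathcal H^c}]\le n^2\Pw(\mathcal H^c)$, with $\Pw(\mathcal H^c)\le e^{-cn}$ from Gaussian matrix concentration via $X=D^{-1}Z$. The constraint on $\lambda\|\widehat W\|_{\operatorname{op}}$ is an artifact of the Gaussian heuristic, which does not transfer directly to spherical rows anyway. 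An entrywise truncation $|W_{ij}|\lesssim\sqrt{\log n/d}$ would not work: it only bounds $\|W\|_{\operatorname{op}}$ by about $n\sqrt{\log n/d}$, giving curvature far below $d$.
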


As noted above, the result of
\prettyref{thm:posterior_mean} is equivalent to a bound on the first moment of the posterior overlap $\Iprod{W}{W^*}$, where $W,W^*$ are i.i.d.\ samples from the posterior distribution $\mu_G$. The next result bounds the higher moments of this overlap, which is useful for bounding $g(k)$ for $k\ge3$. For example, the second moment of the posterior overlap is
    \begin{align*}
    &\Expect_{G\sim \PRGG}\Big[\mathbb{E}_{(W,W^*)\sim (\mu_G)^{\otimes 2}} [\langle W,W^*\rangle^2]\Big]\\
    &\qquad= 
    \sum_{i<j}\sum_{k<\ell}
    \Expect_{G\sim \PRGG}\Big[
    \Expect_{(W,W^*)\sim (\mu_G)^{\otimes 2}}
    [
    W_{ij}(W^*)_{ij} W_{kl} (W^*)_{kl}]\Big]\\
    &\qquad= \sum_{i<j}\sum_{k<\ell}\Big[
    \Big(\mathbb{E}_{G\sim \PRGG} [W_{ij}W_{kl}]\Big)^2
    \Big]\,.
    \end{align*}
It is easy to see that the last expression is related to the posterior second moment of $\mathsf{S}_{3,2}$ and $\mathsf{S}_{4,2}$, which is related to $g(3)$ and $g(4)$. (For details see the proof of \prettyref{lem:partial-sum-2345}.)

\begin{theorem}\label{thm:posterior-overlap-moments}
        For $d\ge (1+\epsilon)n$, $n h(p) \ge 1$, and any integer $k\ge 1$, we have 
    \[
    \mathbb{E}_{G\sim \PRGG}\Big[\mathbb{E}_{(W,W^*)\sim (\mu_G)^{\otimes 2}} [\langle W,W^*\rangle^{2k}]\Big]
    \le C
    \bigg(\frac{n^2h(p)}{d}\bigg)^{2k} + \frac1{n^{\omega(1)}}\,,
    \]
where $C$ denotes a constant depending only on $k$ and $\epsilon$.
\end{theorem}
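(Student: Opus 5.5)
We reduce Theorem~\ref{thm:posterior-overlap-moments} to the first-moment bound of Theorem~\ref{thm:posterior_mean} together with a concentration estimate for the posterior. Throughout, write $\bar W = \E_{\mu_G}[W]$, and decompose $W=\bar W+\tilde W$ and $W^*=\bar W+\tilde W^*$ with $\tilde W,\tilde W^*$ i.i.d.\ centered fluctuations under $\mu_G$. Then
\[
\langle W,W^*\rangle = \|\bar W\|^2 + \langle \bar W,\tilde W\rangle+\langle \bar W,\tilde W^*\rangle + \langle \tilde W,\tilde W^*\rangle ,
\]
and by $(a+b+c+e)^{2k}\lesssim_k a^{2k}+b^{2k}+c^{2k}+e^{2k}$ it suffices to bound the $2k$-th moment of each of the four terms.

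\textbf{Step 1 (posterior concentration).} The key input is a sub-Gaussian concentration statement for the posterior: with $\PRGG$-probability $1-n^{-\omega(1)}$ over $G$, every $1$-Lipschitz function $f$ of $X$ satisfies $\mu_G(|f-\E_{\mu_G} f|\ge t)\le 2e^{-c d t^2}$. To obtain this, reparametrize the latent points as Gaussians $x_i=g_i/\|g_i\|$ (or work directly with $X$), so that the prior density is $\propto \exp(-d\|X\|_F^2/2)$, which is $d$-strongly log-concave. Conditioning on $G$ multiplies this density by indicators of the sets $\{\langle x_i,x_j\rangle\ge\tau\}$ or $\{<\tau\}$. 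These sets are not convex, so the truncation has to be arranged to make them so, e.g.\ by restricting to a high-probability region where the constraints can be replaced by convex ones, or by a change of variables in $W$ on the region where $XX^\top$ is near $I_n$. The condition $d\ge(1+\epsilon)n$ enters here: it guarantees that $XX^\top$ is well-conditioned, with $\|W\|_{op}\le 1-c_\epsilon$ with overwhelming probability, so the change of variables has bounded Jacobian distortion and the curvature $\Theta(d)$ survives. Bakry--\'Emery/Brascamp--Lieb then gives the concentration, and the excluded set contributes the $n^{-\omega(1)}$ term, using the crude bound $|\langle W,W^*\rangle|\le n^2$.

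\textbf{Step 2 (moment bounds).} Conditionally on $\tilde W^*$, the map $X\mapsto\langle \tilde W,\tilde W^*\rangle$ is Lipschitz in $X$ with constant $\lesssim \|\tilde W^*\|\cdot\|X\|_{op}\lesssim\|\tilde W^*\|$. Hence
\[
\E_{\mu_G}\langle\tilde W,\tilde W^*\rangle^{2k}\lesssim_k d^{-k}\,\E_{\mu_G}\|\tilde W\|^{2k}.
\]
Moreover, $\|\tilde W\|^2$ concentrates around its mean. Applying Poincar\'e coordinatewise bounds $\E_{\mu_G}\|\tilde W\|^2$ by $\sum_{i<j}\mathrm{Var}(W_{ij})\lesssim n^2/d$, which gives a contribution of $(n^2/d)^{2k}\cdot d^{k}/n^{2k}\cdot n^{-2k}$. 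This is too crude. A sharper route is to note that $\langle\tilde W,\tilde W^*\rangle$ has variance at most $\E\|\tilde W\|^2/d\lesssim n^2/d^2$, which is $\le (n^2h/d)^2$ exactly when $nh\ge1$; this is where the assumption $nh(p)\ge1$ enters. The cross terms $\langle\bar W,\tilde W\rangle$ are $\|\bar W\|$-Lipschitz, so their $2k$-th moment is $\lesssim (\|\bar W\|^2/d)^k$. For the leading term, we need $\E_G\|\bar W\|^{4k}\lesssim (n^2h/d)^{2k}$, which upgrades Theorem~\ref{thm:posterior_mean}.

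\textbf{Step 3 (main obstacle).} The hardest step is the upgrade of Theorem~\ref{thm:posterior_mean} to higher moments of $\|\bar W\|^2$ over $G$. The plan is to exploit concentration over $G$ as well: $\|\bar W\|$ is a function of $G$, and under the joint law the pair $(X,G)$ has $X$ drawn from the prior. Write $\|\bar W\|=\sup_{\|U\|\le1}\E_{\mu_G}\langle U,W\rangle$ and bound $\E_G\|\bar W\|^{4k}$ by $(\E\|\bar W\|^2)^{2k}$ plus fluctuation terms. The fluctuations are controlled using that $\|\bar W\|\le\E_{\mu_G}\|W\|$ and that $\|W\|^2$ under the prior is $\approx n^2/(2d)$ with sub-Gaussian fluctuations of order $n/d$. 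If this direct argument fails, the alternative is to rerun the information-theoretic argument of Theorem~\ref{thm:posterior_mean} with a tilted test function $\langle\bar W,W\rangle^{k}$ and use the entropy bound $H(G)\le\binom n2h(p)$ in the same way.
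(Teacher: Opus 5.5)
Your Step 2 reduction is sound, but there is a genuine gap at Step 3, which you flag as the crux and do not close. You need $\E_G\|\bar W\|^{4k}\lesssim (n^2h(p)/d)^{2k}$, and route (a) cannot deliver it. The bound $\|\bar W\|\le\E_{\mu_G}\|W\|$ together with $\|W\|^2\approx n^2/(2d)$ only gives $\|\bar W\|^2\lesssim n^2/d$. That loses exactly the factor $h(p)\to0$ the theorem is about, and nothing in your sketch controls fluctuations of $\|\bar W\|^2$ over $G$ at scale $n^2h(p)/d$. Route (b) is unspecified. Moreover, the averaged bound $I(W;G)\le H(G)\le\binom n2 h(p)$ only controls $\E_G\|\bar W\|^2$.

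The missing idea is to use the transportation inequality pointwise in $G$. Apply \eqref{eq:condmean-pointwise} with $A=W\mathbf 1_{\mathcal H}$, which is centered and $O(1/d)$-subgaussian by Lemma~\ref{l:W.is.subgaussian}. Together with data processing and the fact that $G$ is a deterministic function of $W$, this gives
\[
\|\E_{\mu_G}[W\mathbf 1_{\mathcal H}]\|^2\lesssim\frac1d\,\KL\big(\Law(W\mid G)\,\big\|\,\Law(W)\big)=\frac1d\log\frac{1}{\PRGG(G)}\,.
\]
So what you need is control of the self-information $\log(1/\PRGG(G))$, not just its mean $H(G)$. The paper supplies this via typicality: $\PRGG(G)\ge\exp(-10n^2h(p))$ with probability $1-n^{-\omega(1)}$, from edge-count concentration plus counting graphs with at most $2p\binom n2$ edges. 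Hence $\|\bar W\|^2\lesssim n^2h(p)/d$ holds deterministically for every typical $G$, so all moments over $G$ are immediate. Atypical $G$ go into the $n^{-\omega(1)}$ term.

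Step 1 also needs repair, though your second option (working in $W$) is the right one. In the Gaussian parametrization the constraints $\langle x_i,x_j\rangle\ge\tau$ are non-convex, and you give no mechanism that convexifies them, so Lipschitz concentration in $X$ is not established. Also, the claim $\|W\|_{\operatorname{op}}\le 1-c_\epsilon$ is false for $d$ near $n$: by Marchenko--Pastur, $\|W\|_{\operatorname{op}}\approx 2\sqrt{n/d}+n/d$, which is close to $3$.

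What works is to pass entirely to the Gram matrix. $W$ has density $\propto\det(I+W)^{(d-n-1)/2}$ on $\{I+W\succ0\}$, and $-\nabla^2\log\det(I+W)\succeq\|I+W\|_{\operatorname{op}}^{-2}\,I$. So on the convex set $\{\|I+W\|_{\operatorname{op}}\le4\}$, which has probability $1-e^{-cn}$ when $d\ge(1+\epsilon)n$, the prior is $\frac{d-n-1}{16}$-strongly log-concave, with no Jacobian to control. The graph constraints are half-spaces in $W$. So the posterior, further restricted to the convex set $\{\|W\|^2\le 2n^2/d\}$, keeps curvature $\Theta(d)$. On that support $\langle W,W^*\rangle$ is $\sqrt{2n^2/d}$-Lipschitz, which gives the $O(n/d)$ fluctuation directly and recovers your Step 2 conclusion. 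Your aside that $(n/d)^{2k}$ is ``too crude'' is mistaken: it is exactly what is needed, since $n/d\le n^2h(p)/d$ iff $nh(p)\ge1$.
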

Unlike \prettyref{thm:taylor-eta2345} which is specific to the hard RGG model, the analysis for the posterior distribution is far more general. In fact, \prettyref{thm:posterior_mean} holds for any model on $G$ with marginal edge density $p$, and \prettyref{thm:posterior-overlap-moments} only relies on the strong log-concavity of the posterior distribution $\mu_G$. The \hyperlink{proof:t.posterior_mean}{proof of Theorem~\ref{thm:posterior_mean}} is given in \prettyref{sec:posterior_mean}. 
The \hyperlink{proof:t.posterior-overlap-moments}{proof of Theorem~\ref{thm:posterior-overlap-moments}} is given in \prettyref{sec:concentration}.

We remark that \prettyref{thm:posterior_mean} has a natural interpretation from the perspective of estimation. Due to symmetry, given the RGG, the latent vectors $x_i$ are only identified up to a common rotation, so it is natural to consider estimation of the  Gram matrix $W$. Note that 
\[\mathbb{E}_{G\sim \PRGG}\Big[\|\mathbb{E}_{X \sim \mu_G}[W]\|^2\Big]
= \sum_{1 \leq i < j \leq n}
\mathbb{E}_{G\sim \PRGG}\Big[(\mathbb{E}[W_{ij} |G])^2\Big]
\]
precisely captures the variance reduction in $W$ resulting from the observation of the RGG. It is straightforward to check that observing only the edge $A_{ij}$ already yields
\[\Expect\Big[\Expect[W_{ij}|A_{ij}])^2\Big]
= \frac{\zeta^2}{d} \asymp \frac{h(p)}{d}\,.\]
Thus, \prettyref{thm:posterior_mean} states that observing the full RGG can reduce the variance further by at most a constant factor. 

\subsection{Completing the proof of~\prettyref{thm:main-detection}}
For $p$ bounded away from $0$ and $1$, the TV bound is a seminal result of \cite{bubeck2016testing}, and the KL bound follows from \cite{brennan2020phase}
(see also 
\cite[Equation~(31)]{mao2026random}). 
Thus, we may assume $p \ll 1$ in the following.

To prove Theorem~\ref{thm:main-detection}, we use the KL expansion \prettyref{eq:KL}, where each $g(k)$ in \prettyref{eq:gk} involves the posterior mean of the local interaction function $\eta_k$ defined in \prettyref{eq:eta-def}. 
We then apply \prettyref{thm:taylor-eta2345} to approximate each $\eta_k$. The contributions of $g(k)$ for $2\le k\le 5$ are controlled using the posterior moment bounds in Theorems~\ref{thm:posterior_mean} and~\ref{thm:posterior-overlap-moments}; this is carried out in Lemma~\ref{lem:partial-sum-2345}. For $k\ge 6$, we apply a crude bound using Jensen's inequality, as shown in Lemma~\ref{lem:partial-sum-ge-6}. Combining these two lemmas then proves \prettyref{thm:main-detection}.

\begin{lemma} \label{lem:partial-sum-2345}
Assume that $d\gg (nh(p))^3$, $d \ge (1+\epsilon) n$, and $1/n \lesssim p \ll 1$. 
Then we have 
$$
\sum_{k = 2}^{5} \binom{n}{k+1} g(k) = o(1) .
$$
\end{lemma}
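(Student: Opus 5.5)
We treat each $k\in\{2,3,4,5\}$ separately. On the event $\mathcal E=\{a^2\delta_n\le c\}$ we split $\eta_k=E_k+R_k$ via \prettyref{thm:taylor-eta2345}, where $\delta_n$ is the maximal off-diagonal $|W_{ij}|$ over all of $[n]$ plus $d^{-1/2}$. Under the prior, $|W_{ij}|\lesssim\sqrt{\log n/d}$ for all pairs with probability $1-n^{-\omega(1)}$. Since $d\gg (nh(p))^3$ and $d\ge(1+\epsilon)n$, together with $a^2\lesssim\log(1/p)\lesssim\log n$, this gives $a^2\delta_n\lesssim \log^{3/2}n/\sqrt d$, which is small provided $d\gg\log^3 n$. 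We handle the complement by $|\eta_k|\le ((1-p)/p)^{k/2}$, Jensen, and the fact that $\mu_G$ averaged over $G\sim\PRGG$ is the prior; this bad event then costs $n^{k+1}p^{-k}n^{-\omega(1)}=o(1)$. We will therefore bound
\[
g(k)\lesssim \E_G\big[(\E_{\mu_G}E_k)^2\big]+\E_G\big[(\E_{\mu_G}[R_k\mathbf 1_{\mathcal E}])^2\big]+o(n^{-k-1}).
\]

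\emph{Residual terms.} By Jensen and the Nishimori-type identity, $\E_G[(\E_{\mu_G}R_k)^2]\le \E_{\Pw}[R_k^2]\lesssim p^k a^{4(\lceil k/2\rceil+1)}\,\E\delta_k^{2(\lceil k/2\rceil+1)}$. Using $\E\delta^{2m}\lesssim d^{-m}$ up to constants, this is at most $p^k a^{4m}d^{-m}$ with $m=\lceil k/2\rceil+1$. Multiplying by $n^{k+1}$ and using $\zeta^2\asymp pa^2\lesssim h(p)$ gives $n^{k+1}h(p)^k a^{2(2m-k)}d^{-m}$.

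\begin{itemize}
\item For $k=2$, $m=2$: the bound is $n^3h^2a^4/d^2=(n^3h^3/d)\cdot a^4/(hd)$. Since $hd\ge h\,n\ge 1$ and $a^4$ is polylog, we need a little extra room. We get it from $d\ge n$: $a^4/(hd)\le a^4/(nh)$ is small unless $nh$ is polylog, in which case $n^3h^2a^4/d^2\le a^4 h^2 n^3/n^2=a^4h^2n\ll1$.
\item The cases $k=3,4,5$ are analogous; $k=5$ uses the overall bound on $\eta_5$ directly, so no leading term is needed there.
\end{itemize}

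I expect some care with polylog factors, possibly using $\zeta^2\asymp pa^2$ rather than $h$ to absorb $a$'s.

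\emph{Leading terms.} For $k=2$, symmetry and \prettyref{thm:posterior_mean} give
\[
n^3\zeta^4\,\E\big[(\E[W_{12}|G])^2\big]\lesssim n\,h^2\cdot n^2h/d=(nh)^3/d=o(1).
\]
For $k=3$ and $k=4$, by symmetry
\[
\E_G\big[(\E_{\mu_G}\mathsf S_{3,2})^2\big]\lesssim n^{-3}\sum_{\text{cherries}}\E_G\big[(\E_{\mu_G}W_{ij}W_{ik})^2\big]
\]
(and $n^{-4}$ over matchings for $\mathsf S_{4,2}$). Both sums are dominated by $\sum_{i<j,\,k<l}\E_G[(\E_{\mu_G}W_{ij}W_{kl})^2]$, which equals the second overlap moment. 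By \prettyref{thm:posterior-overlap-moments} with $k=1$, this is $\lesssim (n^2h/d)^2$. Hence
\[
n^4a^2\zeta^6\, n^{-3}(n^2h/d)^2\lesssim (nh)^3\cdot na^2 p\,h\,/d^2,
\]
and $n^5\zeta^8n^{-4}(n^2h/d)^2= n^5h^6/d^2=((nh)^3/d)^2/n\to0$; the $k=3$ bound also tends to $0$ by $d\ge n$ and $a^2p\lesssim h$.

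\emph{Main obstacle: $k=5$.} Here the third-order sum $\mathsf S_{5,3}$ appears, and a trilinear quantity $\E_G[(\E W_{hi}W_{hj}W_{uv})^2]$ is not directly an even overlap moment. I would bound it by Cauchy–Schwarz, or by Hölder against the fourth overlap moment in \prettyref{thm:posterior-overlap-moments} with $k=2$ together with the second moment. I expect this to give $\sum\lesssim (n^2h/d)^3$, hence
\[
n^6a^2\zeta^{10}n^{-5}(n^2h/d)^3\lesssim n^7h^8a^2/d^3\to0.
\]
If that interpolation fails, the fallback is the crude overall bound $|\eta_5|\lesssim p^{5/2}(a^2\delta_5)^3$, which gives $n^6p^5a^{12}d^{-3}\le (nh)^{5}n\,\polylog/d^3$. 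This suffices since $d\gg(nh)^3$ and $d\ge n$.
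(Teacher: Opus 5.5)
Your plan follows the paper's proof. You split off a bad event of negligible prior probability, handled by the sup bound on $\eta_k$. You control residuals by Jensen plus $\E_{G\sim\PRGG}\E_{\mu_G}=\E_{\Pw}$. For $k=2$ you use Theorem~\ref{thm:posterior_mean}. For $k=3,4$ you use the second overlap moment from Theorem~\ref{thm:posterior-overlap-moments}, which dominates both the cherry and the matching sums. For $k=5$ you use the overall bound on $\eta_5$.

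Using one global event $\{a^2\delta_n\le c\}$ instead of the paper's local events $\mathcal A_k$ is harmless. One small correction: a threshold $\sqrt{\log n/d}$ gives failure probability $n^{-O(1)}$, not $n^{-\omega(1)}$. A large constant multiple of that threshold, or the paper's $c_0/(2a^2)$, still beats the factor $n^{k+1}p^{-k}\le n^{O(1)}$. Your residual bounds for $k=2,\dots,5$ check out. Your fallback for $k=5$ is exactly the paper's argument, so the H\"older treatment of $\mathsf S_{5,3}$ is unnecessary.

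The step that is wrong as written is the $k=3$ main term. Your first expression is right, but it simplifies to
\[
n^4a^2\zeta^6\cdot n^{-3}\Big(\frac{n^2h}{d}\Big)^2=\frac{n^5a^2\zeta^6h^2}{d^2}\lesssim\frac{(nh)^5a^2}{d^2},
\]
not $(nh)^3\cdot na^2ph/d^2$. You have dropped a factor of order $n$, up to logarithms.

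For the correct quantity, your justification via $d\ge n$ and $a^2p\lesssim h$ does not close. Spending one factor of $d$ on $d\gg(nh)^3$ and the other on $d\ge n$ leaves $o(1)\cdot nh^2a^2$, which diverges for, e.g., $h\approx n^{-1/3}$. Instead, use $d\gg(nh)^3$ for both factors of $d$. This gives $(nh)^5a^2/d^2=o\big(a^2/(nh)\big)$. Then $a^2\lesssim\log(1/p)$ and $nh\ge np\log(1/p)$ give $a^2/(nh)\lesssim 1/(np)=O(1)$. This is exactly where the hypothesis $p\gtrsim 1/n$ is needed. With this correction your argument goes through.
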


\begin{proof}
Let $c_0$ denote the constant $c$ in
\prettyref{thm:taylor-eta2345}.  For $2 \le k \le 5$, define the good event
\[
\mathcal A_k=\mathcal A_k(x_1,\ldots,x_k)
\triangleq 
\Big\{a^2\delta_k(x_1,\ldots,x_k)\le c_0\Big\}\,.
\]
Using that $1/n \lesssim p \ll 1$ and $d \gg (nh(p))^3$, we have (cf.\ \prettyref{lmm:m1m2})
    \[
    1 \le a^2 \lesssim \log\frac{1}{p} \ll d^{1/2}\,,
    \]
from which it follows that
\[
(\mathcal A_k)^c
=
\bigg\{
\max_{1\le i<j\le k}|W_{ij}|>\frac{c_0}{a^2}
-\frac{1}{d^{1/2}}
\bigg\}
\subseteq
\bigg\{
\max_{1\le i<j\le k}|W_{ij}|>\frac{c_0}{2a^2}
\bigg\}\,.
\]
It is well known that $d^{1/2}W_{12}$ is $1$-subgaussian (see \prettyref{lem:beta.mgf}), so we conclude
\begin{equation}\label{e:prob.of.good.event.A.k}
\Pw((\mathcal A_k)^c)
\le C_k \exp\bigg(-\frac{c_k d}{a^4}\bigg)
=n^{-\omega(1)}\,,
\end{equation}
where the last bound uses $p \gtrsim 1/n$ and $d \gg (nh(p))^3$. Following the notation of \prettyref{thm:taylor-eta2345}, for $2\le k\le 5$ we can decompose
    \[\eta_k = E_k 
    + R_k \mathbf 1_{\mathcal{A}_k}
    + R_k \mathbf 1_{(\mathcal{A}_k)^c}\,.
    \]
Substituting this into the definition \prettyref{eq:gk} of $g(k)$, we obtain
    \begin{align*}
    \frac{g(k)}{3} 
    &\le 
    g(k)_\textup{main}
    +g(k)_\textup{res}+g(k)_\textup{bad}\,,\\
    g(k)_\textup{main}
    &\equiv \mathbb{E}_{G\sim \PRGG}\left[
\left(\mathbb{E}_{X\sim\mu_G}[E_k]\right)^2\right]\,,\\
    g(k)_\textup{res}
    &\equiv \mathbb{E}_{G\sim \PRGG}\left[
\left(\mathbb{E}_{X\sim\mu_G}[
    R_k \mathbf{1}_{\mathcal{A}_k}
    ]\right)^2\right]\,,\\
    g(k)_\textup{bad}
    &\equiv \mathbb{E}_{G\sim \PRGG}\left[
\left(\mathbb{E}_{X\sim\mu_G}[R_k
    \mathbf{1}_{(\mathcal{A}_k)^c}
    ]\right)^2\right]\,.
    \end{align*}
We bound each of the above contributions separately. 

\paragraph{Contribution from bad events $(\mathcal{A}_k)^c$.}
For the last quantity above, recalling that $|W_{ij}|\le1$ and $\zeta^2 \le O(h(p))$ (cf.\ \prettyref{lmm:m1m2}), it is easy to see that $|\eta_k| + |E_k| \le n^{O(1)}$. It follows using Jensen's inequality that
\begin{align*}
g(k)_\textup{bad}
&=\mathbb{E}_{G\sim \PRGG}\left[
\left(\mathbb{E}_{X\sim\mu_G}
[(\eta_k-E_k)\mathbf 1_{(\mathcal A_k)^c}]\right)^2\right]
\le n^{O(1)} \mathbb{E}_{G\sim \PRGG}\left[
\left(\mathbb{E}_{X\sim\mu_G}[\mathbf 1_{(\mathcal A_k)^c}]\right)^2
\right] \\
&\qquad\le n^{O(1)} \mathbb{E}_{G\sim \PRGG}
\mathbb{E}_{X\sim\mu_G}[\mathbf 1_{(\mathcal A_k)^c}]
=n^{O(1)}\Pw((\mathcal A_k)^c)
=\frac1{n^{\omega(1)}}\,,
\end{align*}
where the last bound holds due to 
\eqref{e:prob.of.good.event.A.k}. 

\paragraph{Contribution from residuals on good events $\mathcal{A}_k$.}
Applying Jensen's inequality again gives
    \begin{align*}
    g(k)_\textup{res}
    &\le 
\mathbb{E}_{G\sim \PRGG}
\mathbb{E}_{X\sim\mu_G}[(R_k)^2
    \mathbf{1}_{\mathcal A_k}]
= \E_{\Pw}[(R_k)^2  \mathbf{1}_{\mathcal A_k} ]\\
&\lesssim p^k a^{4(\lceil k/2\rceil+1)}
\E_{\Pw}\bigg[
\max_{1\le i,j \le k}
\bigg(|W_{ij}|+\frac{1}{d^{1/2}}\bigg)^{2(\lceil k/2\rceil+1)}\bigg]
\lesssim \frac{p^k}{d^{\lceil k/2 \rceil+1}}
    \bigg(\log \frac{1}{p}\bigg)^{2(\lceil k/2 \rceil+1)}\,.
    \end{align*}
This results in
\begin{align*}
    \binom{n}{3}g(2)_\textup{res}
    &\lesssim \frac{n^3 p^2}{d^2}
    \bigg(\log \frac{1}{p}\bigg)^4
    \lesssim \frac{n^{3/2} p^2}{d^{1/2}}
    \bigg(\log \frac{1}{p}\bigg)^4
    \ll p^{1/2}
    \bigg(\log \frac{1}{p}\bigg)^4
    = o(1) ,\\
    \binom{n}{4}g(3)_\textup{res}
    &\lesssim \frac{n^4p^3}{d^3}
    \bigg(\log \frac{1}{p}\bigg)^6
    \lesssim \frac{n^{3/2}p^3}{d^{1/2}}
    \bigg(\log \frac{1}{p}\bigg)^6
    \ll p^{3/2}
    \bigg(\log \frac{1}{p}\bigg)^6
    =o(1) ,\\
    \binom{n}{5}g(4)_\textup{res}
    &\lesssim \frac{n^5p^4}{d^3}
     \bigg(\log \frac{1}{p}\bigg)^6
     \lesssim \frac{n^3p^4}{d}
     \bigg(\log \frac{1}{p}\bigg)^6
     \ll p
     \bigg(\log \frac{1}{p}\bigg)^6
     =o(1) ,
    \end{align*}
where in each line above, we have used $d \gtrsim n$ for the second inequality and $d \gg (n h(p))^3$ for the third inequality.

\paragraph{Contribution from main terms.}
It remains to bound the contribution from the leading terms $E_k$.
For $k=2$, again recalling $\zeta^2 = O(h(p))$, we have
    \begin{align*}
    \binom{n}{3} g(2)_\textup{main} 
    &= \binom{n}{3} \mathbb{E}_{G\sim \PRGG}\left[
\left(\mathbb{E}_{X\sim\mu_G}[\zeta^2W_{12}]\right)^2\right] \\
&=
\frac{\binom{n}{3}\zeta^4}{\binom{n}{2}}
\mathbb{E}_{G\sim \PRGG}
\Big[
\|\mathbb{E}_{X\sim \mu_{G}}[W]\|^2 \Big]
\lesssim 
\frac{(nh(p))^3}{d} + \frac1{n^{\omega(1)}}\,,
\end{align*}
 where the last step follows from \prettyref{thm:posterior_mean} under the condition $d \ge (1+\epsilon)n$. 


For $k=3$ and $k=4$, it follows by symmetry and by expansion of $\langle W, W^*\rangle^2$ that
    \begin{align*}
    \mathbb{E}_{G\sim \PRGG}\Big[
    \Big( \mathbb{E}_{X\sim \mu_G} [\mathsf{S}_{3,2}]
    \Big)^2
\Big]
&\lesssim 
\frac{\mathbb{E}_{G\sim \PRGG}\left[\mathbb{E}_{(W,W^*)\sim (\mu_G)^{\otimes 2}}\left[\langle W,W^*\rangle^2\right]\right]}{n^3}
\lesssim \frac{nh(p)^2}{d^2}\,,\\
    \mathbb{E}_{G\sim \PRGG}\Big[
    \Big( \mathbb{E}_{X\sim \mu_G} [\mathsf{S}_{4,2}]
    \Big)^2
\Big]&\lesssim 
\frac{
  \mathbb{E}_{G\sim \PRGG}\left[\mathbb{E}_{(W,W^*)\sim (\mu_G)^{\otimes 2}}\left[\langle W,W^*\rangle^2\right]\right]}{n^4}
    \lesssim \frac{h(p)^2}{d^2}\,,
    \end{align*}
    where the last inequality in each line follows from \prettyref{thm:posterior-overlap-moments} under condition $d\ge (1+\epsilon)n$ and $nh(p)\ge1$.
It follows that
    \begin{align*}
    \binom{n}{4} g(3)_\textup{main}
    &\lesssim \frac{(nh(p))^5}{d^2}
    \log\frac{1}{p}
    =o(1)
    &\textup{provided $p\gtrsim1/n$}\,,\\
    \binom{n}{5} g(4)_\textup{main}
    &\lesssim \frac{(nh(p))^6}{nd^2}
    \ll \frac1n = o(1)\,,
    \end{align*}
again having used the assumption $d \gg (nh(p))^3$.

\paragraph{Case $k=5$.} In this case, on the event $\mathcal{A}_5$ we apply the overall bound on $\eta_5$ given by the final assertion of \prettyref{thm:taylor-eta2345}: this gives
\[
\binom{n}{6} g(5)
\lesssim\frac{n^6p^5}{d^3}
\bigg(\log \frac{1}{p}\bigg)^6
\lesssim\frac{n^{9/2}p^5}{d^{3/2}}
\bigg(\log \frac{1}{p}\bigg)^6
\ll p^{1/2}
\bigg(\log \frac{1}{p}\bigg)^6
= o(1)\,,
\]
provided $d \gtrsim n$ and $d \gg (n h(p))^3$.
\end{proof}





\begin{lemma} \label{lem:partial-sum-ge-6}
Assume that $d\gg (nh(p))^3$, $d \gtrsim n$, and $1/n \lesssim p \ll 1$. Then we have 
$$
\sum_{k = 6}^{n-1} \binom{n}{k+1} g(k) = o(1).
$$
\end{lemma}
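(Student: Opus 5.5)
The plan is to drop the posterior structure entirely and use Jensen's inequality, as announced before Lemma~\ref{lem:partial-sum-2345}:
\[
g(k)\le \mathbb{E}_{G\sim\PRGG}\mathbb{E}_{X\sim\mu_G}[\eta_k^2]=\mathbb{E}_{\Pw}[\eta_k(x_1,\dots,x_k)^2].
\]
This reduces each term to a prior quantity. Writing $\eta_k=\mathbb{E}_x\prod_i\kappa(\langle x,x_i\rangle)$ and introducing an independent copy $x'$, we get
\[
\mathbb{E}_{\Pw}[\eta_k^2]=\mathbb{E}_{x,x'}\Big[\big(\mathbb{E}_{y}[\kappa(\langle x,y\rangle)\kappa(\langle x',y\rangle)]\big)^k\Big]=\mathbb{E}_{x,x'}\big[\eta_2(x,x')^k\big],
\]
because the $x_i$ are i.i.d.\ given $x,x'$. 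So $g(k)\le \mathbb{E}[\eta_2(x,x')^k]$ with $x,x'$ uniform and independent, and everything is controlled by the tail of the single scalar $\eta_2$.

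Next we bound $\eta_2(x,x')$ as a function of $W=\langle x,x'\rangle$. On the good event $a^2(|W|+d^{-1/2})\le c_0$, Theorem~\ref{thm:taylor-eta2345} for $k=2$ gives
\[
|\eta_2|\le \zeta^2|W|+C\frac{p}{1-p}\,a^4\big(|W|+d^{-1/2}\big)^2\lesssim p\,a^2\Big(|W|+\frac{1}{d^{1/2}}\Big),
\]
using $\zeta^2\asymp pa^2$ and $a^2(|W|+d^{-1/2})\le c_0$. Off the good event we use the crude bound $|\eta_2|\le 1/p$, since $|\kappa|\le \max(\sqrt{(1-p)/p},\sqrt{p/(1-p)})$. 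The bad event has probability $\exp(-cd/a^4)=n^{-\omega(1)}$ by the subgaussian bound from Lemma~\ref{lem:beta.mgf}.

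For the good-event part, $d^{1/2}W$ is $1$-subgaussian, so
\[
\mathbb{E}\big[(|W|+d^{-1/2})^k\big]\le (C\sqrt{k}/\sqrt d)^k .
\]
Combining with $\binom{n}{k+1}\le n^{k+1}/(k+1)!$ gives
\[
\binom{n}{k+1}\mathbb{E}\big[\eta_2^k\mathbf 1_{\rm good}\big]\le \frac{n}{(k+1)!}\Big(\frac{C\sqrt{k}\,n p a^2}{\sqrt d}\Big)^k .
\]
Since $pa^2\lesssim h(p)$ and $d\gg (nh(p))^3\ge (nh(p))^2$ (using $nh(p)\gtrsim 1$ from $p\gtrsim1/n$), the ratio $r:=nh(p)/\sqrt d$ satisfies $r\ll (nh(p))^{-1/2}$. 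The factor $k^{k/2}/(k+1)!$ is summable geometrically, so the total is $\lesssim n r^6\sum_k (Cr)^{k-6}\lesssim n r^6$.

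This is where care is needed. We have $nr^6=n(nh(p))^6/d^3$, and using $d\gg(nh(p))^3$ only on part of it gives
\[
n r^6\le n\,\frac{(nh(p))^6}{d^{3}}\ll \frac{n}{d^{2}}\,(nh(p))^{3}\le \frac{n}{d}\cdot\frac{(nh(p))^3}{d}\ll \frac nd ,
\]
which is $O(1)$ under $d\gtrsim n$ but not obviously $o(1)$. To close this, use $d\gg (nh(p))^3$ once more in the last factor: $n\cdot (nh(p))^6/d^3\le (n/d)\cdot((nh(p))^3/d)^2=o(1)$ since $d\gtrsim n$. So $k\ge6$ is exactly the threshold at which the crude bound suffices, which explains why $k\le5$ needed the refined treatment.

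The bad-event part is $\binom{n}{k+1}p^{-k}n^{-\omega(1)}$, summed over $k\le n$. Because $p^{-k}$ can be as large as $n^{k}$, a fixed $n^{-\omega(1)}$ does not absorb it for large $k$. The remedy is to use the full tail $\mathbb{P}(|W|>t)\le 2e^{-dt^2/2}$ together with a bound on $\eta_2$ that remains good for moderately large $|W|$. Concretely, I would try $|\eta_2|\le \min(1/p,\;\mathbb{P}(T_2\ge a\mid T_1\ge a)/(p(1-p)))$ and argue that $|\eta_2|^k\le p^{-k}$ is only needed when $|W|\gtrsim a^{-2}$, which has probability $e^{-cd/a^4}$. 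Then $d/a^4\gg n^{1}/\log^2 n$, because $d\gtrsim n$ and $a^2\lesssim\log n$. This makes $e^{-cd/a^4}\le e^{-n^{1-o(1)}}$, which beats $\sum_k\binom{n}{k+1}p^{-k}\le (1+n)^{n}\cdot n^n=e^{O(n\log n)}$ only marginally. I expect this uniform-in-$k$ bad-event bookkeeping to be the main obstacle. If needed I would refine it by splitting $|W|$ into dyadic ranges and using the sharper bound $|\eta_2|\lesssim p^{-1}\min(1,e^{a^2|W|})\cdot p$, or by using $\eta_2\ge -1$ so that only positive large values matter.
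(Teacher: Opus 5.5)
Your reduction $g(k)\le \E_{x,y}[\eta_2(x,y)^k]$ and your treatment of the good event $a^2(|\langle x,y\rangle|+d^{-1/2})\le c_0$ are correct and essentially the paper's. Your merged bound $|\eta_2|\lesssim pa^2(|W|+d^{-1/2})$ is fine, and $nr^6=(n/d)\,((nh(p))^3/d)^2=o(1)$ is exactly the paper's $k=6$ estimate.

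The genuine gap is the complementary (large-overlap) event, which you leave open, and the sketch you give there fails. The comparison you state is false: a factor $e^{-n^{1-o(1)}}$ does not beat $e^{O(n\log n)}$ at all, and the product diverges.

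The difficulty is not just bookkeeping. Use the correct crude bound $|\eta_2(x,y)|\le 1$ instead of $1/p$; it follows from Cauchy--Schwarz, since $\E_z[\kappa(\langle z,x\rangle)^2]=1$. This gives $\sum_{k\ge6}\binom{n}{k+1}|\eta_2|^k\le n(n|\eta_2|)^6e^{n|\eta_2|}\le n^7e^n$. Even so, the tail $\Pw(|\langle x,y\rangle|\ge c/a^2)\le 2e^{-cd/a^4}$ is only $e^{-o(n)}$ when $d\asymp n$, because $a\to\infty$ as $p\to0$, so it cannot pay for $e^{n}$. Splitting dyadically with an $O(1)$ bound on $\eta_2$ also fails on the whole range where $d\langle x,y\rangle^2\lesssim n$. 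Note that your suggested bound $p^{-1}\min(1,e^{a^2|W|})\,p$ is just $\lesssim1$.

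The missing ingredient is a decay estimate for $\eta_2$ at moderate overlaps. It must give $n|\eta_2|\ll d/\log^2(1/p)$, i.e.\ $|\eta_2|=o(1/\log^2(1/p))$, uniformly for $c/\log(1/p)\le|\langle x,y\rangle|\le 1-\eta$ with $\eta$ a small constant. The paper proves this in Lemma~\ref{lem:eta2-moderate-overlap} by a geometric argument. If $\langle x_1,x_2\rangle\le 1-\eta$, the event $\{\langle x,x_1\rangle\ge\tau,\ \langle x,x_2\rangle\ge\tau\}$ forces $\langle x,(x_1+x_2)/\|x_1+x_2\|\rangle\ge\sqrt{2/(2-\eta)}\,\tau$. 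This is a cap of probability $\lesssim a^{\eta/(2-\eta)}p^{2/(2-\eta)}$, so $|\eta_2|\lesssim p^{\eta/(2-\eta)}(\log(1/p))^{\eta/[2(2-\eta)]}$. Since $p\ll1$, the factor $e^{n|\eta_2|}$ is then negligible against $e^{-cd/\log^2(1/p)}$.

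The remaining range $|\langle x,y\rangle|>1-\eta$ is handled with $|\eta_2|\le 1$ and the exact cap bound $\Pw(|\langle x,y\rangle|>1-\eta)\lesssim(2\eta)^{(d-1)/2}/\sqrt d$. This beats $n^7e^n$ once $\eta$ is chosen small enough depending on the constant in $d\gtrsim n$. Without these two ingredients, your argument controls the $k\ge 6$ sum only on the good event.
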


\begin{proof}
For $k\ge 6$, the Cauchy--Schwarz inequality gives
\[
g(k)\le \mathbb{E}_{\Pw}[\eta_k(x_1,\dots,x_k)^2]
=\mathbb{E}_{x,y}\Bigg[ 
\mathbb{E}_{x_1,\dots,x_k}\bigg[\prod_{i=1}^k
\kappa(\langle x,x_i\rangle)
\kappa(\langle y,x_i\rangle)
\bigg]\Bigg]
=\mathbb{E}_{x,y}[|\eta_2(x,y)|^k]\,.
\]
As in the proof of \prettyref{lem:partial-sum-2345}, define the event
\[
\mathcal{A}_2 
\equiv \mathcal A_2(x,y)
\equiv \bigg\{|\langle x,y\rangle|+\frac1{d^{1/2}}
\le \frac{c_0}{a^2}\bigg\}\,.
\]
On this event, the expansion of \prettyref{thm:taylor-eta2345} gives
\[
|\eta_2(x,y)|
\lesssim h(p)|\langle x,y\rangle|
+ p \bigg( \log \frac1p \bigg)^2 
\bigg( \langle x,y\rangle^2+\frac{1}{d}\bigg)\,.
\]
The moments of the spherical inner products $\langle x,y\rangle$ can be bounded directly, or using the subgaussian condition (\prettyref{lem:beta.mgf}), yielding
\begin{align*}
\mathbb{E}_{x,y}[|\eta_2(x,y)|^k ; \mathcal{A}_2 ]
&\le (C h(p))^k\mathbb{E}[|W_{12}|^k]
 +(C p(\log(1/p))^2)^k\mathbb{E}[((W_{12})^2+d^{-1})^k] \\
&\le
(C' h(p))^k \frac{k^{k/2}}{d^{k/2}}
+
(C' p(\log(1/p))^2)^k \frac{k^k}{d^k}\,.
\end{align*}
Combining with Stirling's approximation gives
\[
\sum_{k=6}^{n-1}\binom n{k+1}
\mathbb{E}_{x,y}[|\eta_2(x,y)|^k; \mathcal{A}_2]
\lesssim n\cdot \sum_{k=6}^{n-1}
\left[
\left(\frac{C''nh(p)}{\sqrt d}\right)^k
+
\left(\frac{C''np(\log(1/p))^2}{d}\right)^k
\right].
\]
Under $d\gtrsim n$ and $d\gg (nh(p))^3$, the above sum is dominated by the $k=6$ term, which has order
\begin{align*}
n\left(\frac{nh(p)}{\sqrt d}\right)^6
+n\left(\frac{np(\log(1/p))^2}{d}\right)^6
&\lesssim
\left(\frac{(nh(p))^3}{d}\right)^2
+\frac{n^{3/2} p^6(\log(1/p))^{12}}{d^{1/2}} \\
&\ll o(1) + p^{9/2} (\log(1/p))^{21/2}
=o(1) \,,
\end{align*}
where we have used  $d\gtrsim n$ and $d\gg (nh(p))^3$ for the two inequalities respectively.

It remains to control the contribution from the event $(\mathcal A_2)^c$. Here we bound
\[
\sum_{k=6}^{n-1}\binom n{k+1}u^k
\le n(nu)^6\sum_{k=6}^{n-1}\frac{(nu)^{k-6}}{(k+1)!}
\le n(nu)^6 e^{nu}\,.
\]
Since $a^2 \lesssim \log(1/p)$ (\prettyref{lmm:m1m2}), on the event $(\mathcal{A}_2)^c$ we have
$|\langle x,y\rangle|\ge c/\log(1/p)$, so
\begin{align} \nonumber
&\sum_{k=6}^{n-1}\binom n{k+1}
\mathbb{E}_{x,y}[|\eta_2(x,y)|^k; (\mathcal{A}_2)^c] \\
&\qquad\le
Cn\,\mathbb{E}_{x,y}\!\bigg[(n|\eta_2(x,y)|)^6
\exp\{ n|\eta_2(x,y)|\};
|\langle x,y\rangle|\ge \frac{c}{\log(1/p)}
\bigg]\,.\label{eq:kge6bound}
\end{align}
We further decompose \eqref{eq:kge6bound} according to whether $|\langle x,y\rangle|\le 1-\eta$ is a small constant $\eta \in (0,1)$. For
$c/\log(1/p)\le |\langle x,y\rangle|\le 1-\eta$,
\prettyref{lem:eta2-moderate-overlap} gives
$|\eta_2(x,y)|\le C_\eta(\log(1/p))^{\eta/[2(2-\eta)]}p^{\eta/(2-\eta)}$.  
Since $\langle x, y \rangle$ is subaussian with variance proxy $1/d$ (\prettyref{lem:beta.mgf}),  the contribution 
to \eqref{eq:kge6bound} from the range $c/\log(1/p)\le |\langle x,y\rangle|\le 1-\eta$ is
\[
\begin{aligned}
&\le C_\eta n\bigg\{
np^{\eta/(2-\eta)} (\log(1/p))^{\eta/[2(2-\eta)]}\bigg\}^6
\exp\bigg\{
Cn p^{\eta/(2-\eta)} (\log(1/p))^{\eta/[2(2-\eta)]}
-\frac{cd}{\log^2(1/p)}\bigg\} \\
&\le
n^7p^{6\eta/(2-\eta)}\left(\log(1/p)\right)^{3\eta/(2-\eta)}
\exp\bigg\{-\frac{cd}{2 \log^2(1/p)}\bigg\}
=o(1),
\end{aligned}
\]
where the bounds hold because $d\gtrsim n$ and $1/n \lesssim p \ll 1$. 
For the remaining range $1-\eta < |\langle x,y\rangle| \le 1$, we use the Cauchy--Schwarz inequality to obtain the easy bound
\[
|\eta_2(x,y)|
\le
\left(\mathbb E_z[\kappa(\langle z,x\rangle)^2]\right)^{1/2}
\left(\mathbb E_z[\kappa(\langle z,y\rangle)^2]\right)^{1/2}
=1.
\]
Moreover,
$$
\mathbb{P}( \langle x, y \rangle > 1-\eta` )
 = \int_{(1-\eta)\sqrt{d} }^{\sqrt{d}} c_{d,1} \left( 1- \frac{t^2}{d}\right)^{(d-3)/2} \diff t
 \lesssim (2\eta)^{(d-1)/2} /\sqrt{d} .
 $$
Combining the above, we see that the contribution to \eqref{eq:kge6bound} from the range
$1-\eta < |\langle x,y\rangle| \le 1$ is at most
    \[
 C n^7e^n (2\eta)^{(d-1)/2}/\sqrt{d}
    \,.\]
This is $o(1)$ if $d \ge c n$ for any constant $c>0$, by choosing $\eta$ to be a sufficiently small constant depending on $c$. 
\end{proof}

\begin{proof}[\hypertarget{proof.main.thm}{Proof of
\prettyref{thm:main-detection}}]\label{proof:main.thm}
As mentioned above, the main result follows by substituting the bounds from Lemmas~\ref{lem:partial-sum-2345} and \ref{lem:partial-sum-ge-6} into the KL expansion \eqref{eq:KL}.
\end{proof}

\section{Analysis of posterior overlap}
\label{sec:posterior}

\subsection{Bound on posterior mean}
\label{sec:posterior_mean}

In this section we prove 
\prettyref{thm:posterior_mean} on the first moment of the posterior overlap.
Later we extend this result to higher moments using a concentration argument.

We start by recalling some basic results on subgaussianity. A real-valued zero-mean random variable $X$ is called \textbf{subgaussian with variance proxy $\sigma^2$} if
    \[\Expect[e^{tX}] \le \exp\bigg(
        \frac{\sigma^2 t^2}{2}
        \bigg)\]
for all $t\in\mathbb{R}$. An $\mathbb{R}^d$-valued zero-mean random vector $X$ is called \textbf{subgaussian with variance proxy $\sigma^2$} if it holds for every unit vector $v\in\mathbb{S}^{d-1}$ that the scalar random variable $\langle X,v\rangle$ is subgaussian with variance proxy $\sigma^2$~\cite[Section 3.4]{vershynin2018High}.

\begin{lemma}\label{lem:beta.mgf}
Let $x$ is sampled uniformly at random from the unit sphere $\mathbb{S}^{d-1}$. Then $x$ is zero-mean and subgaussian with variance proxy $1/d$.

\begin{proof}
Fix a vector $v\in\mathbb{R}^d$.
Let $t=\|v\|$. Then $Z=\langle v,x\rangle/t$ is a symmetric random variable, and
    \[
    B=Z^2\sim\operatorname{Beta}\bigg(\frac{1}{2},
        \frac{d-1}{2}\bigg)\,.
    \]
It follows from the moment-generating function of the beta distribution that
    \[
    \E_x[\exp(\langle v,x\rangle)]
    = \E [\exp(tZ)]
    = 1 + \sum_{k=1}^\infty \frac{t^{2k}}{(2k)!}
        \prod_{r=0}^{k-1} \frac{1+2r}{d+2r} 
    \le 1 + \sum_{k=1}^\infty \frac{t^{2k}}{k! (2d)^k}
    = \exp\bigg(\frac{\|v\|^2}{2d}\bigg)\,,
    \]
which proves the claim.
\end{proof}
\end{lemma}

\begin{lemma}\label{l:chi.square.lower.tail}
If $X\sim\chi^2(d)$, then it holds for all $x\ge0$ that
    \[
    \mathbb{P}(X \le d-x) \le 
    \exp\bigg(-\frac{x^2}{4d}\bigg)\,.
    \]
\begin{proof}
From the moment-generating function of the chi-square distribution, for any $t<0$ we can bound
    \[
    \mathbb{P}(X \le d-x) 
    \le \frac{\Expect[e^{tX}]}{\exp(t(d-x))}
    = \frac{(1-2t)^{-d/2}}{\exp(t(d-x))}
    \le \exp(tx + dt^2)\,.
    \]
Optimizing over $t<0$ gives the claimed bound.
\end{proof}
\end{lemma}

\begin{lemma}\label{l:subgaus.with.constant}
Suppose $X$ is a real-valued random variable, mean zero, such that
    \[
    \E[e^{tX}] \le c \exp\bigg(\frac{vt^2}{2}\bigg)
    \]
for all $t\in\mathbb{R}$, with $c\ge1$ a constant. Then $X$ is subgaussian with variance proxy $v' \le 2cev$.

\begin{proof}
First note that for large $t$ a subgaussian bound holds, since
    \[
    \E[e^{tX}]
    \le \exp\bigg(\frac{vt^2}{2} + \log c\bigg)
    \le \exp\bigg((1+M)\frac{vt^2}{2}\bigg)
    \]
as long as $|t|\ge h$, where $M>0$ is a parameter to be determined, and 
    \[h \equiv \bigg(\frac{2 \log c}{Mv}\bigg)^{1/2}\,.
    \]
We then consider $|t| \le h$.
For $a,y\in\mathbb{R}$ with $|a|\le1$, we can bound
    \[e^{ay}
    -1-ay
    \le a^2 \sum_{k\ge2}
    \frac{|y|^k}{k!}
    \le a^2 e^{|y|}
    \le a^2 (e^y + e^{-y})\,.
    \]
Since $\E [X]=0$, it follows that, for all $|t| \le h$,
    \[
    \E[e^{tX}]
    \le 1 + \frac{t^2}{h^2}\E[e^{hX} + e^{-hX}]
    \le
    1 + \frac{2}{h^2}
        { c} t^2 \exp\bigg(\frac{vh^2}{2}\bigg)
    = 1 + v t^2 \frac{M  c}{\log c}
    \exp\bigg(\frac{\log c}{M}\bigg)\,.
    \]
Taking $M=\log c$ gives
    \[
    \E[e^{tX}]
    \le 1 + {c} e vt^2 
    \le \exp({c} evt^2)
    \]
for all $|t| \le h$. Combining with the above bound for $|t| \ge h$ shows that $X$ is subgaussian with variance proxy
    \[v' \le 
    \max\{1+\log c, 2ce\} v = 2ce v\,,
    \]
proving the claim.
\end{proof}
\end{lemma}

The proof of \prettyref{thm:posterior_mean} applies an information-theoretic comparison argument. To that end, we recall the 
transportation lemma of Bobkov and G\"otze (see \cite[Lemma 4.18]{BLM13}):
\begin{lemma}
\label{l:transportation}
Under the measure $\mathbb{P}$, suppose $Z$ is a real-valued 
mean-zero
random variable that is subgaussian with variance proxy $\sigma^2$. Then we have
    \[
    |\E_{\mathbb{Q}}[Z]| \le \Big( 2\sigma^2 \operatorname{KL}(
        \mathbb{Q}\|
        \mathbb{P})\Big)^{1/2}
    \]
for any probability measure $\mathbb{Q}$ which is absolutely continuous with respect to $\mathbb{P}$.
\end{lemma}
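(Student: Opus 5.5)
The statement to prove is the Bobkov--G\"otze transportation lemma, which follows from the Donsker--Varadhan variational formula for relative entropy. I will use that formula in the form
\[
\KL(\mathbb{Q}\|\mathbb{P}) \;=\; \sup_{f}\Big\{\E_{\mathbb{Q}}[f]-\log \E_{\mathbb{P}}[e^{f}]\Big\}.
\]
Only the inequality ``$\ge$'' is needed, for the single test function $f=tZ$:
\[
\E_{\mathbb{Q}}[f] \le \KL(\mathbb{Q}\|\mathbb{P}) + \log\E_{\mathbb{P}}[e^{f}].
\]
This direction follows from Jensen's inequality. Write $\phi=\diff\mathbb{Q}/\diff\mathbb{P}$ and integrate over $\{\phi>0\}$. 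Then
\[
\E_{\mathbb{Q}}\big[f-\log\phi\big]=\E_{\mathbb{Q}}\big[\log (e^f/\phi)\big]\le \log \E_{\mathbb{Q}}[e^f/\phi]\le \log\E_{\mathbb{P}}[e^f].
\]
We may assume $\KL(\mathbb{Q}\|\mathbb{P})<\infty$, since otherwise there is nothing to prove.

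Applying this with $f=tZ$ for real $t$ and using the subgaussian hypothesis $\log\E_{\mathbb{P}}[e^{tZ}]\le \sigma^2t^2/2$ gives
\[
t\,\E_{\mathbb{Q}}[Z]\le \KL(\mathbb{Q}\|\mathbb{P})+\frac{\sigma^2t^2}{2}\qquad\text{for all } t\in\mathbb{R}.
\]
For $t>0$, divide by $t$ and minimize over $t$. The optimal choice is $t=(2\KL/\sigma^2)^{1/2}$, which gives $\E_{\mathbb{Q}}[Z]\le (2\sigma^2\KL(\mathbb{Q}\|\mathbb{P}))^{1/2}$. If $\KL=0$, then $\mathbb{Q}=\mathbb{P}$ and the claim is trivial because $Z$ has mean zero. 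Taking $t<0$ in the same way gives the matching lower bound, and the two together give the absolute value bound.

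The main technical point is integrability: $\E_{\mathbb{Q}}[Z]$ has to be well defined, and the Jensen step has to be valid. I would handle this by first running the argument with $Z$ replaced by its truncation $Z\wedge M$, or by noting that the $\mathbb{Q}$-integral of the positive part is controlled by the same inequality. Then I would pass to the limit by monotone convergence. Since $e^{tZ}$ is $\mathbb{P}$-integrable, the bound $\E_{\mathbb{Q}}[tZ]^+<\infty$ follows from the inequality itself, so $\E_{\mathbb{Q}}[Z]$ is finite.
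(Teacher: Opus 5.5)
Your proof is correct and follows the standard route. The paper gives no proof of its own; it cites \cite[Lemma 4.18]{BLM13}, whose proof is the same argument: the Donsker--Varadhan (entropy duality) bound with test function $\lambda Z$, then minimizing $\KL/\lambda+\sigma^2\lambda/2$ over $\lambda$ of each sign. Your integrability remarks suffice, and truncation is not actually needed: finiteness of $\KL$ makes $\log(\diff\mathbb{Q}/\diff\mathbb{P})$ integrable under $\mathbb{Q}$, and the Jensen step itself gives $\E_{\mathbb{Q}}[(tZ)^+]<\infty$.
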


\begin{corollary}\label{c:transportation}
Suppose $A$ is an $\mathbb{R}^N$-valued subgaussian random vector with mean zero and variance proxy $\sigma^2$. Suppose $B$ is a discrete random variable defined on the same probability space. Then
\begin{equation}
    \Big\| \E[A\,|\,B=b]\Big\|^2
    \le 2\sigma^2
    \KL\Big( \Law(A|B=b) \Big\| \Law(A) \Big)
    \label{eq:condmean-pointwise}
\end{equation}
for any $b$ with $\mathbb{P}(B=b)>0$. Taking expectation over $B$ gives
\begin{equation}
    \E\Big[
    \| \E[A\,|\,B]\|^2
    \Big]
    \le 2\sigma^2
    I(A;B).
    \label{eq:condmean}
\end{equation}
\end{corollary}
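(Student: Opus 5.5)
The statement to prove is Corollary~\ref{c:transportation}, and the plan is to deduce it directly from Lemma~\ref{l:transportation} by a duality argument: for a fixed vector, the squared norm is the square of the supremum of its inner products with unit vectors.

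Fix $b$ with $\mathbb{P}(B=b)>0$, and let $\mathbb{P}$ be the law of $A$ and $\mathbb{Q}=\Law(A\,|\,B=b)$. Since $\mathbb{Q}(\cdot)\le \mathbb{P}(\cdot)/\mathbb{P}(B=b)$, we have $\mathbb{Q}\ll\mathbb{P}$. If $m\equiv\E[A\,|\,B=b]=0$ there is nothing to prove. Otherwise, set $v=m/\|m\|\in\mathbb{S}^{N-1}$ and $Z=\langle A,v\rangle$.

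By the definition of a subgaussian vector, $Z$ is mean-zero and subgaussian with variance proxy $\sigma^2$ under $\mathbb{P}$. Lemma~\ref{l:transportation} then gives
\[
\|m\| = \langle m,v\rangle = \E_{\mathbb{Q}}[Z] \le \big(2\sigma^2\KL(\mathbb{Q}\|\mathbb{P})\big)^{1/2}.
\]
Here $Z$ is viewed as a function of $A$, so the relevant divergence is between the laws of $A$, which is exactly $\KL(\Law(A|B=b)\|\Law(A))$. Squaring yields \eqref{eq:condmean-pointwise}.

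For \eqref{eq:condmean}, average \eqref{eq:condmean-pointwise} over $b\sim\Law(B)$. By the standard identity
\[
I(A;B)=\sum_b \mathbb{P}(B=b)\,\KL\big(\Law(A|B=b)\,\big\|\,\Law(A)\big),
\]
valid for discrete $B$ (the mutual information is the KL divergence between the joint law and the product of the marginals, disintegrated along $B$), this average is exactly the right-hand side of \eqref{eq:condmean}.

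There is no real obstacle here. The only points needing care are the absolute continuity $\mathbb{Q}\ll\mathbb{P}$, which follows from $\mathbb{P}(B=b)>0$, and the fact that Lemma~\ref{l:transportation} applies to the scalar projection $Z$ with the divergence measured on the law of $A$ itself.
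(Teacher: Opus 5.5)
Your proof is correct and follows the paper's argument: project onto a unit direction, apply the Bobkov--G\"otze transportation lemma (Lemma~\ref{l:transportation}), and average the pointwise bound over $B$ using the disintegration of $I(A;B)$. The only difference is cosmetic. You apply the lemma once, directly on $\mathbb{R}^N$, with $\mathbb{P}=\Law(A)$ and $Z=\langle\cdot,v\rangle$ for the optimal $v=m/\|m\|$. The paper instead applies it to the scalar laws of $\langle v,A\rangle$ for every unit $v$, then uses data processing to pass to the law of $A$, and finally maximizes over $v$; your route makes the data-processing step unnecessary.
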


\begin{remark}
The inequality \prettyref{eq:condmean} compares the conditional and unconditional means by the KL divergence between the conditional and unconditional distributions (mutual information). This is reminiscent of Tao's inequality \cite[Corollary 7.11]{PW-it}, which states that for any zero-mean $A$ taking values in the interval $(-\sigma,\sigma)$, we have
 $\E[
    (\E[A\,|\,B])^2]
\leq C \sigma^2 I(A;B)$
for some universal constant $C$.
As such, \prettyref{eq:condmean} is a (dimension-free) generalization to subgaussian random vectors.
Results akin to \prettyref{eq:condmean} have previously appeared in the context of generalization bound in machine learning \cite{russo2016controlling,xu2017information}.

Separately, we note  \prettyref{eq:condmean} holds for any random vector $B$,
but it suffices to consider discrete $B$ for the proof. 
Indeed, this follows from discretizing $B$ and applying a continuity argument to the left side and the data processing inequality to the right side.
\end{remark}

\begin{proof}[Proof of Corollary \ref{c:transportation}]
Fix a realization $b$ of $B$ such that $\mathbb{P}(B=b)>0$. For any unit vector $v\in\mathbb{R}^N$, 
applying Lemma~\ref{l:transportation} 
to 
$\Prob = \Law(\Iprod{v}{A})$ 
and $\mathbb{Q} = \Law(\Iprod{v}{A}|B=b)$
yields
\begin{align*}
(\Iprod{v}{\E[A\,|\,B=b]})^2
=
(\E[\Iprod{v}{A}\,|\,B=b])^2
& \leq 2\sigma^2
    \KL(\Law(\Iprod{v}{A}|B=b)\| \Law(\Iprod{v}{A}))\\
& \le 2\sigma^2
    \KL(\Law(A|B=b)\| \Law(A)),    
\end{align*}  
where the last step is by the data processing inequality.
Maximizing the left-hand side over $v$
yields
\[
\Big\| \E[A\,|\,B=b]\Big\|^2
    \le 2\sigma^2
    \KL\Big( \Law(A|B=b) 
    \Big\| \Law(A) \Big)\,,
\]
as claimed. Averaging over $B$ gives the conclusion.
\end{proof}

We aim to apply Corollary \ref{c:transportation} to the inner product matrix $W$. However, $W$ is not subgaussian per se due to its Bernstein-type tail, as given by the Hanson--Wright inequality. Nevertheless, the next lemma shows that $W$ is $O(1/d)$-subgaussian upon truncation on its operator norm.

\begin{lemma}\label{l:W.is.subgaussian}
Let
\begin{equation}
\calH =  \{\|I_n+W\|_{\operatorname{op}}\le 4 \}.
    \label{eq:calH}
\end{equation}
If $d \ge (1+\epsilon) n$ for some arbitrarily small constant $\epsilon>0$, then there exists a constant $c>0$ only depending on $\epsilon$ such that 
$\Prob_0(W \in \calH^c) \leq \exp(-cn)$ 
and,
for any zero-diagonal symmetric matrix $U$,
\[
\mathbb{E}_{X\sim \Pw}[\exp( \langle U,W\mathbf{1}_{\mathcal H}\rangle)]\le \exp\left(\frac{2 \|U\|^2}{d}\right)+1\le 2\exp\left(\frac{2\|U\|^2}{d}\right).
\]

\begin{proof}
Recall that $W=XX^\top -I_n$, so 
$\calH$ is implied by the event
$\{\|X\|_{\rm op} \le 2\}$.
Let $Z$ be an $n\times d$ matrix with i.i.d.\ standard gaussian entries, and let $D$ be the $n\times n$ diagonal matrix with diagonal entries $\|z_1\|,\ldots,\|z_n\|$. Then $X$ is equidistributed as $D^{-1}Z$, so we can bound
    \[\|X\|_{\rm op} \le 
    \frac{\|Z\|_{\rm op}}
        {\min_i \|z_i\|}\,.\]
By the standard concentration inequality for Gaussian matrices~\cite[Cor.~5.35]{vershynin2010introduction}, we have
\[\|Z\|_{\rm op}
\le\sqrt{d} + \sqrt{n}
+t\]
with probability at least $1-2\exp(-t^2/2)$. By the standard concentration bound for chi-squared random variables (Lemma~\ref{l:chi.square.lower.tail}), we have
    \[
    \min\Big\{ \|z_i\|
    : 1\le i\le n\Big\}
    \ge d-(2d)^{1/2} t
    \]
with probability at least $1-n\exp(-t^2/2)$.  Taking $t=cn^{1/2}$, we conclude that with probability at least $1-3n\exp(-c^2 n/2)$, we have
\[\|X\|_{\rm op} \le \frac{1+(1+c)/\sqrt{1+\epsilon}}{[1-\sqrt{2}c/\sqrt{1+\epsilon}]^{1/2}} \le 2\,,\]
where the last inequality holds by choosing $c$ to be a sufficiently small constant  depending on $\epsilon$.

Next, let $W_m$ denote the principal submatrix of $W$ formed by the first $m$ rows and columns. We will truncate on the event
$\mathcal H_m= \{\| I_m+W_m\|_{\operatorname{op}}\le 4 \}$.
Note that $\calH_{m+1} \subset \calH_{m}$ for all $1 \le m \le n-1$ and $\mathcal{H} = \mathcal{H}_n$. Abbreviating $u_n$ for the last column of $U$, we can bound
\begin{align*}
     &\ \mathbb{E}_{X\sim \Pw}\Big[\exp( \langle U,W\mathbf{1}_{\mathcal H}\rangle)\Big]\\
   \le &\ \mathbb{E}_{x_1,\dots,x_n}
   \bigg[\exp\bigg( \sum_{i<j}u_{ij}\langle x_i,x_j\rangle\bigg)
   \mathbf{1}\{W_n \in \mathcal H_n\}\bigg]+1\\
   \le&\ \mathbb{E}_{x_1,\dots,x_{n-1}}\bigg[\exp\bigg(
   \sum_{i<j<n}u_{ij}\langle x_i,x_j\rangle\bigg)\mathbf{1}\{ W_{n-1} \in \mathcal H_{n-1}\} \mathbb{E}_{x_n}\bigg[\exp\bigg(
   \bigg\langle \sum_{i<n} u_{in}x_i,x_n
   \bigg\rangle
   \bigg)\bigg]\bigg]+1\\
    \le&\ \mathbb{E}_{x_1,\dots,x_{n-1}}\bigg[\exp\bigg(\sum_{i<j<n}u_{ij}\langle x_i,x_j\rangle\bigg)\mathbf{1}\{W_{n-1}\in \mathcal H_{n-1}\}\cdot \exp\bigg(\frac{ (u_n)^\top (I_{n-1}+ W_{n-1}) u_n}{2d}\bigg)\bigg]+1\\
    \le&\ \mathbb{E}_{x_1,\dots,x_{n-1}}\bigg[\exp\bigg(\sum_{i<j<n}u_{ij}\langle x_i,x_j\rangle\bigg)\mathbf{1}\{W_{n-1} \in \mathcal H_{n-1}\}\bigg]\cdot \exp\left(\frac{2 \|u_n\|^2
    }{d}\right)+1\,,
\end{align*}
where the second-to-last inequality is by Lemma~\ref{lem:beta.mgf}, and the
 last inequality holds because we have $\|I_{n-1}+W_{n-1}\|_{\operatorname{op} }
\le 4$ on the event $\calH_{n-1}$.
Iterating the above gives the claimed bound.
\end{proof}
\end{lemma}

\begin{proof}[\hypertarget{proof:t.posterior_mean}{Proof of Theorem~\ref{thm:posterior_mean}}]
For $d\ge (1+\epsilon)n$ for some constant $\epsilon>1$, then Lemma \ref{l:W.is.subgaussian} gives $\Pw[W\in \mathcal H]=1-n^{-\omega(1)}$. It follows that 
\begin{align*}
 \mathbb{E}_{G\sim \PRGG}\big[\|\mathbb{E}_{X \sim \mu_G}[W]\|^2]
\le&\ 2\mathbb{E}_{G\sim \PRGG}\big[\|\mathbb{E}_{X \sim \mu_G}[W\mathbf{1}_{\mathcal H}]\|^2\big]+ 2\mathbb{E}_{G\sim \PRGG}\big[\mathbb{E}_{X\sim \mu_G}[\mathbf{1}_{\mathcal H^c}\|W\|^2]\big]\\
\le &\ 2\mathbb{E}_{G\sim \PRGG}\big[\|\mathbb{E}_{X \sim \mu_G}[W\mathbf{1}_{\mathcal H}]\|^2\big]+O(n^{-\omega(1)})\,.
\end{align*}
Lemma~\ref{l:W.is.subgaussian}, combined with Lemma~\ref{l:subgaus.with.constant}, gives that the random vector $W\mathbf{1}_{\mathcal H}$ is subgaussian under the prior $\Pw$ with variance proxy ${16}e/d$.

Furthermore, we claim that $\Expect[W1_\calH]=0$ by symmetry, where we recall the event $\calH$ is defined by  \prettyref{eq:calH}.
Indeed, for any vector of signs $\mathfrak{s}\in \{-1,+1\}^n$ and $D_{\mathfrak{s}} = \diag(\mathfrak{s})$, 
the matrices $W$ and $D_{\mathfrak{s}} W D_{\mathfrak{s}}$ are equidistributed, and also have the same operator norm.
Thus for any pair $i<j$, choosing $\mathfrak{s}_i=-\mathfrak{s}_j$, we have 
\[\Expect[W_{ij} \mathbf{1}_{\{\| I+W\|_{\operatorname{op}} \leq 4\}}] = 
\Expect[(D_{\mathfrak{s}} W D_{\mathfrak{s}})_{ij} \mathbf{1}_{\{\| I+D_{\mathfrak{s}} W D_{\mathfrak{s}}\|_{\operatorname{op}} \leq 4\}}]
= -\Expect[W_{ij} \mathbf{1}_{\{\| I+W\|_{\operatorname{op}} \leq 4\}}]\,,\]
which implies that the above is zero.

We can therefore apply Corollary~\ref{c:transportation} with $A=W\mathbf{1}_{\mathcal H}$ and $B=G$ to obtain
\begin{equation}\label{e:norm.bound.KL.conditional}
    \|\E_{X\sim\mu_G}[W\mathbf{1}_{\mathcal H}]\|^2
    \le \frac{{32}e}{d}
    \KL( \Law(W\mathbf{1}_{\mathcal H}|G) \| \Law(W\mathbf{1}_{\mathcal H}))\,.
    \end{equation}
Taking expectation over $G$ gives 
\[
    \E_{G\sim\PRGG}\Big[
    \|\E_{X\sim\mu_G}[W\mathbf{1}_{\mathcal H}]\|^2
    \Big]
    \lesssim \frac{I(W\mathbf{1}_{\calH};G)}{d}
    \le \frac{H(G)}{d}
    \le \frac{n^2 h(p)}{d}\,,
    \]
    where the last inequality uses \[H(G) \leq \sum_{i < j} H(G_{ij}) = \binom{n}{2}h(p)\,.\]
    This gives the assertion. Finally, we also note that 
    \[
    \KL( \Law(W\mathbf{1}_{\mathcal H}|G) \| \Law(W\mathbf{1}_{\mathcal H}))
    \leq 
    \KL( \Law(W|G) \| \Law(W))  =   \log \frac{1}{\PRGG(G)},
    \]
    using again 
    the data processing inequality and the $W$-measurability of 
    $G$. 
    Thus \eqref{e:norm.bound.KL.conditional} implies    \begin{equation}\label{e:norm.bd.for.later}
    \|\E_{X\sim\mu_G}[W\mathbf{1}_{\mathcal H}]\|^2
    \le \frac{{32}e}{d} \log \frac{1}{\PRGG(G)}
    \le \frac{ {320} e n^2 h(p)}{d}\,,
    \end{equation}
provided $\PRGG(G) \ge \exp(-10 n^2 h(p))$; this will be used below in the proof of Theorem~\ref{thm:posterior-overlap-moments}.
\end{proof}

\subsection{Concentration from log-concavity}
\label{sec:concentration}

We now prove \prettyref{thm:posterior-overlap-moments}, which controls higher moments of the posterior overlap. The main idea is to leverage the strong log-concavity of the posterior distribution of the Gram matrix $W$:

\begin{lemma}\label{l:prior.W.logconcave}
For $d \ge (1+\epsilon)n$, the prior distribution $W\sim\Pw$ conditioned on $\calH$ in \prettyref{eq:calH}
is $(c_0 d)$-strongly log-concave, where $c_0$ is a positive constant depending on $\epsilon$.

\begin{proof}
Under the prior distribution $\Pw$, 
the density of $W$ is given by (see e.g.\ \cite[Sec.~7.6]{anderson2003introduction})
\begin{align*}
\Prob_0(W) 
&\propto e^{-V(W
)} \mathbf{1}\{ I+W \succ 0\}\,, \\
V(W) &= - \frac{d-n-1}{2}\log \det(I+W)\,.
\end{align*}
Abbreviate $\tW=I+W$, and denote by 
$\nabla^2$  the Hessian with respect to the lower triangular elements of $W$. Let $H$ be an $n\times n$ symmetric matrix with zero diagonal. Then
    \begin{align*}
    \log\det(\tW+\epsilon H)
    &=\log\det(\tW)
    +\log\det(I+\epsilon B)\,,\\
    B &= \tW^{-1/2}H\tW^{-1/2}\,.
    \end{align*}
$B$ is a symmetric matrix. Denoting its eigenvalues by $\lambda_1,\ldots,\lambda_n$, we have
    \begin{align*}
    \log\det(\tW+\epsilon H)
    &= \log\det(\tW)
    +\sum_{i=1}^n\log(1+\epsilon\lambda_i) 
    = \log\det(\tW)
    +\epsilon \sum_{i=1}^n \lambda_i
    -\frac{\epsilon^2}{2}
    \sum_{i=1}^n (\lambda_i)^2\\
    &= \log\det(\tW)
    +\epsilon \tr(B)
    -\frac{\epsilon^2}{2}
    \tr(B^2)\,.
    \end{align*}
It follows that
\begin{equation}
-H^\top\nabla^2 \big[\log\det \tilde{W} \big] H =\tr(\tilde{W}^{-1}H\tilde{W}^{-1}H)=\|\tilde{W}^{-1/2}H\tilde{W}^{-1/2}\|_{\operatorname{F}}^2\ge \frac{\|H\|_{\operatorname{F}}^2}{\|\tilde{W}\|_{\operatorname{op}}^2}\,,
\label{eq:hess-prior}
\end{equation}
that is to say, the log-determinant function is strongly log-concave.\footnote{From  \prettyref{eq:hess-prior} we see that 
as long as $d\geq n+1$,
the prior distribution $\Pw$ is always strongly log-concave with constant $\Theta(\frac{d-n-1}{n^2}$); this is tight when all $x_i$'s are aligned. Lemma~\ref{l:prior.W.logconcave} improves the curvature to $\Theta(d)$ by truncating on the typical behavior of $W$.} Next recall from \prettyref{eq:calH} that 
$\calH=\{\|\tilde{W}\|_{\operatorname{op}} \leq 4\}$, which is a convex set in $W$. 
For each $W \in \calH$, the right-hand side of \prettyref{eq:hess-prior} is further lower bounded by $\|H\|^2/16$, and hence
\[
\nabla^2 V(W)\succeq \frac{d-n-1}{16}I_{n(n-1)/2}\succeq c_0 d I_{n(n-1)/2}\,,
\]
for a constant $c_0>0$ depending on $\epsilon$. This proves the claim.
\end{proof}
\end{lemma}

\begin{proof}[\hypertarget{proof:t.posterior-overlap-moments}{Proof of Theorem~\ref{thm:posterior-overlap-moments}}]
Define the event 
\[\mathcal{I} := \bigg\{
\|I_n+W\|_{\operatorname{op}}\le 4
\textup{ and } \|W\|^2\le {2} \frac{n^2}{d}\bigg\}\,.
\]
We also define the $G$-measurable event
\[\mathcal{G}:=
\bigg\{
\PRGG(G)\ge \exp(-10n^2h(p)) \textup{ and }
\mu_G( W \in \mathcal I^c)=n^{-\omega(1)}\bigg\}\,.\]
We will show in Lemma~\ref{eq:rgg-pmf-lower-bound} below that $\PRGG(G\in \mathcal G)=1-n^{-\omega(1)}$; we assume this claim for the remainder of this proof.
For any $G\in \mathcal G$, define $\bar{\mu}_G=\mu_G(\cdot\mid \mathcal I)$. For any fixed $k\ge 1$, we may bound
\begin{align*}
 &\mathbb{E}_{G\sim \PRGG}\left[\mathbb{E}_{(W,W^*)\sim (\mu_G)^{\otimes 2}}\big[|\langle W,W^*\rangle |^{2k}\big]\right] \\
 &\qquad \le\mathbb{E}_{G\sim \PRGG}\left[\mathbf{1}_{\mathcal G}\mathbb{E}_{(W,W^*)\sim \mu_G\otimes \mu_G}\big[|\langle W,W^*\rangle|^{2k}\big]\right]+n^{-\omega(1)}\\
 &\qquad\le 2\mathbb{E}_{G\sim \PRGG}\left[\mathbf{1}_{\mathcal G}\mathbb{E}_{(W,W^*)\sim (\bar{\mu}_G)^{\otimes2}}\big[|\langle W,W^*\rangle|^{2k}\big]\right]+n^{-\omega(1)}\,.
\end{align*}

We next show that for every $G\in \mathcal G$, the inner expectation satisfies the bound
\[
\mathbb{E}_{(W,W^*)\sim (\bar{\mu}_G)^{\otimes2}}\big[\langle W,W^*\rangle^{2k}\big]
\le \bigg(
    O(1) \frac{n^2h(p)}{d}\bigg)^{2k}\,.
\]
To this end, fix a graph $G\in \mathcal G$. We have
\begin{align}\nonumber
&\mathbb{E}_{(W,W^*)\sim (\bar{\mu}_G)^{\otimes2}}\big[\langle W,W^*\rangle^{2k}\big]\\
&\qquad\le 2^{2k} (\mathbb{E}_{(\bar{\mu}_G)^{\otimes2}}[\langle W,W^*\rangle])^{2k}
+2^{2k} \mathbb{E}_{(W,W^*)\sim (\bar{\mu}_G)^{\otimes2}}\big[(\langle W,W^*\rangle-\mathbb{E}_{(\bar{\mu}_G)^{\otimes2}}[\langle W,W^*\rangle])^{2k}\big]\,. \label{e:logconcavity.two.terms}
\end{align}

For the first term on the right-hand side of \eqref{e:logconcavity.two.terms}, since $G\in \mathcal G$, it is easy to see that 
\[
\mathbb{E}_{(W,W^*)\sim (\bar{\mu}_G)^{\otimes2}}[\langle W,W^*\rangle]=\mathbb{E}_{(W,W^*)\sim (\mu_G)^{\otimes2}}[\langle W,W^*\rangle]+n^{-\omega(1)}\,,
\]
and from \eqref{e:norm.bd.for.later} we obtain
\[
\mathbb{E}_{(W,W^*)\sim (\mu_G)^{\otimes2}}[\langle W,W^*\rangle]=\|\mathbb{E}_{W\sim \mu_G}[W]\|^2\le \frac{160 e n^2h(p)}{d}
+ \frac1{n^{\omega(1)}}\,.
\]
(On the right-hand side, the first term is from the contribution of $W\mathbf{1}_\mathcal{H}$, while the second term is from the contribution of $W\mathbf{1}_{\mathcal{H}^c}$, using that $G\in\mathcal{G}$ and $\mathcal{I}\subseteq\mathcal{H}$.)
This implies that the first term on the right-hand side of 
\eqref{e:logconcavity.two.terms} is upper bounded by 
\[\bigg(
O(1) \frac{n^2h(p)}{d} + \frac1{n^{\omega(1)}}\bigg)^{2k}\,.\]

To control the last term in \eqref{e:logconcavity.two.terms}, we use log-concavity. 
The support of $\bar{\mu}_G$ is given by restricting that of $\Prob_0$ to
\[
\left\{\|I_n+W\|_{\operatorname{op}}\le 4\right\}\cap \left\{\|W\|^2\le 2n^2/d\right\}\cap \{W_{ij}\ge \tau,\forall (i,j)\in E(G), W_{ij}< \tau,\forall (i,j)\notin E(G)\}\,,
\]
which is a \textit{convex} set. 
Crucially, a strongly log-concave distribution conditioned on a convex set inherits the 
strong log-concavity with the same constant. Thus, by Lemma~\ref{l:prior.W.logconcave}, the measure $\bar{\mu}_G$ is also $c_0 d$-strongly log-concave and so is $(\bar{\mu}_G)^{\otimes2}$. 
Moreover, the function
\[
(W,W^*)\mapsto \langle W,W^*\rangle
\]
is $\sqrt{2n^2/d}$-Lipschitz continuous in the support of $(\bar{\mu}_G)^{\otimes2}$. The Lipschitz concentration inequality for strongly log-concave measures (see e.g.~\cite[Propns.~5.4.1 and 5.7.1]{bakry2014analysis}) gives that for any $t\ge 0$,
\[
\mathbb{P}_{(W,W^*)\sim (\bar{\mu}_G)^{\otimes2}}\left[ \left|\langle W,W^*\rangle-\mathbb{E}_{\bar{\mu}_G\otimes \bar{\mu}_G}[\langle W,W^*\rangle \right| \geq t\right]\le 2\exp\left( -\frac{c_1 d^2t^2}{n^2}\right)\,,
\]
for some constant $c_1>0,$
meaning that $\langle W,W^*\rangle$ has gaussian-type fluctuations with standard deviation $O(n/d)$. This proves that the last term in 
\eqref{e:logconcavity.two.terms} is upper bounded by
\[ O_k(1) \bigg( \frac{n}{d} \bigg)^{2k}\,,\]
and the claim follows because $n h(p) \ge 1$.
\end{proof}

\begin{lemma} \label{eq:rgg-pmf-lower-bound}
For $d\ge (1+\epsilon)n$, we have $\PRGG(G\in \mathcal G)=1-n^{-\omega(1)}$. 

\begin{proof} Recall that 
    \[\mathcal{I} = 
    \mathcal{H} \cap \bigg\{
        \|W\|^2 \le \frac{2n^2}{d}
        \bigg\}\,,\]
and we already saw in Lemma~\ref{l:W.is.subgaussian} that $\Pw(\mathcal{H}^c) \le n^{-\omega(1)}$. From Lemma~\ref{l:prior.W.logconcave}, the prior distribution $\Pw(\cdot|\calH)$ is $(c_0 d)$-strongly logconcave. The function $W\mapsto\|W\|$ is 1-Lipschitz, and 
\[(\Expect_0[\|W\|])^2 
\le \Expect_0[\|W\|^2] 
\le \frac{n^2}{d}\,,\] so it follows again from the Lipschitz concentration inequality for log-concave measures that 
    \[
    \Pw\bigg(\|W\| \ge \sqrt{\frac{2n^2}{d}}\bigg)
    \leq \Pw\bigg(\|W\| \ge \sqrt{\frac{2n^2}{d}}\bigg|\calH\bigg) + \Pw(\calH^c)
    \le \frac{1}{n^{\omega(1)}}\,.
    \]
This proves $\Pw( W \in \mathcal I^c)=n^{-\omega(1)}$, and it then follows from Markov's inequality that
 \[\PRGG\Big(
 \mu_G( W \in \mathcal I^c)=n^{-\omega(1)}\Big)
 =1-n^{-\omega(1)}\,.\]
It remains to prove that
\[
\PRGG\bigg(\PRGG(G)\ge \exp(-10n^2h(p))\bigg)
\ge 1-n^{-\omega(1)}\,.
\]
For $p=n^{-\Omega(1)}$, this follows from a simple counting argument. First, we have \[\PRGG\bigg(|E(G)|\le \binom{n}{2} \cdot 2p\bigg)=1-n^{-\omega(1)}\,,\] for instance from the coupling between $\PRGG$ and $\PER$ obtained by~\cite[Prop 1.3]{liu2022testing}. The total number of graphs satisfying this edge count constraint is upper-bounded by
    \[
    \exp\bigg\{
    \binom{n}{2} h(2p)
    \bigg\}
    \le \exp\{ 2 n^2 p \log n\}\,.
    \]
As a result, we can bound
    \[
    \PRGG\bigg(\PRGG(G)\le \exp(-10n^2h(p))\bigg)
    \le \frac{\exp\{ 2 n^2 p \log n\}}
        {\exp\{ 10 n^2 p \log n\}}
    \le \frac{1}{n^{\omega(1)}}\,,
    \]
which concludes the proof.
\end{proof}
\end{lemma}

 \section{Concluding remarks}\label{sec:conclusion}

We conclude with a few remarks on the proof techniques and possible extensions.

\paragraph{Comparison of proof techniques.} Following many previous works~\cite{brennan2020phase,liu2022testing,liu2023phase,liu2023probabilistic,mao2026random}, our proof begins with a chain-rule expansion of the KL divergence. Roughly speaking, one reveals the graph sequentially: at step $k$, one exposes the edges between
the new vertex $k$ and the previous vertices $1,\ldots,k-1$. This reduces
the problem to bounding an incremental KL divergence at each step. To obtain a
tight bound, however, one must understand the posterior distribution of these
newly revealed edges given the graph exposed so far, namely the induced graph on
vertices $1,\ldots,k-1$.

Most existing works \cite{liu2022testing,liu2023phase,liu2023probabilistic,bangachev2024detection,bangachev2025random} avoid this posterior analysis by further revealing the latent positions  $x_1, \ldots, x_{k-1}$. While technically convenient, this reveals too much
information and causes the KL bound to be loose. This looseness is precisely why the previous state-of-the-art impossibility result for general $p$ stopped at the condition $d \gg n^3p^2\polylog(n)$~\cite{liu2022testing}. More broadly, this limitation has been
identified in several recent works as a central obstacle to proving tight
detection thresholds for random geometric graph models; see, for example,
\cite[Section 8.1]{bangachev2025random} for a detailed discussion.

Our main departure from this approach is to analyze the posterior distribution
directly. The key type of statement we need is that, when estimating
$W_{12}$, the information contained in the rest of the graph
$A$, beyond the single edge $A_{12}$, is negligible. For soft RGGs, such a result was proved in~\cite{mao2026random} by
approximating the smooth kernel with a polynomial whose coefficients decay rapidly, then
using a change-of-measure argument and controlling the resulting exponential
moments via Hanson--Wright inequalities.

For hard RGGs, this strategy breaks down. First, polynomial
approximations to the indicator kernel do not have sufficiently fast coefficient
decay. Second, and more critically, the change-of-measure argument becomes ineffective, because the posterior measure is uniform over all latent configurations
$X$ that is compatible with the observed graph, rather than a  tilted measure with  weights determined by $W$. The main innovation in our posterior analysis is therefore to
control the posterior overlap by different means: we bound its first moment via
an information-theoretic transportation argument in terms of the entropy of the
random geometric graph, and then control higher moments using the strong
log-concavity of the prior and posterior measures of $W$.

Posterior analysis for hard RGGs has previously been carried out only in the
constant-degree regime $p=\Theta(1/n)$~\cite{liu2022testing}. Although
related in spirit, that analysis has a different nature: it establishes
approximate independence and near-uniformity of $\{x_i:i\in S\}$ under the
posterior for every relatively small subset $S$ of nodes. The proof uses the cavity method and crucially relies
on the locally tree-like structure of neighborhoods in sparse graph, and therefore
appears limited to the constant-degree regime.


\paragraph{Possible extensions.} 
The main ingredients of our proof are quite general. 
The posterior analysis relies primarily on information-theoretic comparison arguments and
strong log-concavity. Therefore, this proof strategy may be
useful for studying many variants of the RGG model such as those considered in~\cite{bangachev2025random,bangachev2024detection,mao2026random}. For instance, we expect the same result to hold  when the latent positions are drawn from an isotropic Gaussian prior. In
that setting, the strong log-concavity (Wishart) continues to hold and the main distribution-specific ingredient that would need to be
rechecked is the expansion of the local interaction function $\eta_k$ in~\prettyref{thm:taylor-eta2345}.

 \paragraph{Challenges when $d \le n$.} Our current proof is restricted to the  $d \ge (1+\epsilon) n$ regime. Establishing the conjectured threshold when $d\le n$ remains an open problem and presents several significant challenges. 
First, although our analysis on the posterior-overlap mean can be extended to
the regime $d=O( n)$, it remains unclear how to prove the sharp bound in \prettyref{thm:posterior_mean} when $d=o(n)$. Second, our current estimate of the posterior-overlap moments in \prettyref{thm:posterior-overlap-moments} crucially relies on the strong log-concavity of the prior and posterior distributions of the Gram matrix $XX^\top$, which holds only for $d>n$. 
Finally, our current analysis controls the contribution of $g(k)$ for large $k$ using a crude estimate that avoids posterior analysis altogether. This approach breaks down for $d=o(n)$ due to rare but significant events that cause the bound to diverge (see Lemma~\ref{lem:partial-sum-ge-6}).

\appendix

\section{Proof of~\prettyref{thm:taylor-eta2345}}
\label{app:etak}

Let $r\ge2$ and $x_1,\ldots,x_r\in \mathbb{S}^{d-1}$ be fixed. Let $W$ be the $r\times r$ zero-diagonal symmetric matrix with  $W_{ij} =\ip{x_i}{x_j}$ for all $1\le i,j\le r$, and let $H \equiv I + W$ be the Gram matrix. Let
    \[\delta=\max_{1 \le i<j \le r}
    \bigg\{
    |W_{ij}|+\frac1{d^{1/2}}
    \bigg\} \,,\]
and assume $\delta \le c_r$ so that $H$ is positive-definite. 

Let $x\sim\Unif(\mathbb{S}^{d-1})$, and consider the random variables
$T_i\equiv \sqrt d\,\ip{x_i}{x}$ for $1\le i\le r$. 
We also let $T$ stand for an independent copy of $T_1$.
Write $\nu(\dd t)=f_d(t)\dd t$ for the law of $T$, whose  density function is given explicitly by
\begin{equation}\label{e:T.density}
f_d(t)
=
c_{d,1}\left(1-\frac{t^2}{d}\right)_+^{(d-3)/2},
\qquad
c_{d,1}
=
\frac{\Gamma(d/2)}
{(\pi d)^{1/2}\Gamma((d-1)/2)}.
\end{equation}
Let $a=\sqrt{d}\tau\ge1$, so that we have  $p=\Pbb(T_i\ge a)$ and $K(t)=\one_{\{t\ge a\}}-p$. Recall that we denote
\[
\kappa(t)=\frac{K(t)}{\sqrt{p(1-p)}}\,.
\]
Let $\nu_H$ denote the joint law of $(T_1,\ldots,T_r)$ as determined by the Gram matrix $H$. Define
\begin{equation}\label{e:def.L.G}
L(t)\equiv \frac{\dd\nu_H}{\dd\nu^{\otimes r}}(t),
\qquad t=(t_1,\ldots,t_r).
\end{equation}
Then we can rewrite the $r$th interaction function as
\begin{equation}
\label{eq:eta-density}
\eta_r(x_1,\ldots,x_r)
=
\E\bigg[\prod_{i=1}^r\kappa(T_i)\bigg]
=
\frac{1}{(p(1-p))^{r/2}}
\int \bigg\{ \prod_{i=1}^r K(t_i) \bigg\}
L(t)\,\nu^{\otimes r}(\dd t).
\end{equation}
Finally put
\begin{align}
m_1=\int tK(t)\,\nu(\dd t)
=p \Expect[T|T\geq a]
= (p(1-p))^{1/2}\zeta\,,
\label{eq:m1}
\end{align}
where we recall that $\zeta$ is defined by \eqref{eq:zeta-def}.
We have 
$m_1\lesssim ap$ and $\zeta\asymp m_1/\sqrt{p} \lesssim a\sqrt{p}$.
(cf.~\prettyref{lmm:m1m2}).

\subsection{An interpolating path}
We define an interpolating model between the correlated projections ($\nu_H$) and independent projections ($\nu^{\otimes r}$). Then we take Taylor approximation of this likelihood ratio and integrate with the kernel to obtain an approximation of $\eta_r$. For $D>r$ and a positive definite $r\times r$ matrix $\Sigma$, define
\[
f_{D,r,\Sigma}(t)
=
\frac{c_{D,r}}{(\det \Sigma)^{1/2}}
\left(1-\frac{t^\top \Sigma^{-1}t}{D}\right)_+^{(D-r-2)/2},
\]
where $c_{D,r}$ is the normalizing constant for the $r$-dimensional
projection of $D^{1/2}x$, with $x$ sampled uniformly at random from $\mathbb{S}^{D-1}$:
\begin{equation}
\label{eq:cDr}
c_{D,r}
= \prod_{j=0}^{r-1} c_{D-j,1}
=
\frac{\Gamma(D/2)}
{(\pi D)^{r/2}\Gamma((D-r)/2)}\,.
\end{equation}
If $x_1,\ldots,x_r$ have Gram matrix $\Sigma$, and $x$ is sampled uniformly at random from $\mathbb{S}^{D-1}$, then $f_{D,r,\Sigma}$ gives the joint density of the random variables
$(D^{1/2}\langle x,x_i\rangle : 1\le i \le r)$.
Let $f_D$ be the corresponding one-dimensional density.  
Now, for $0<\theta\le1$, set
\begin{align*}
    H_\theta
    &=I+\theta(H-I) = I + \theta W\,,\\
D_\theta &= d/\theta^2\,,
\end{align*}
and define the likelihood ratio
\begin{equation}
\label{eq:Ltheta}
L_\theta(t)
=
\frac{f_{D_\theta,r,H_\theta}(t)}
{\prod_{i=1}^r f_{D_\theta}(t_i)}.
\end{equation}
We extend $L_\theta$ continuously to $\theta=0$, where $D_\theta\to\infty$
and $H_\theta\to I$.  In this limit the numerator and denominator both
converge to the corresponding standard Gaussian densities, hence
\[
L_0(t)=1.
\]
At $\theta=1$, $L_1\equiv L$.
For an integer $M\ge0$, define the degree-$M$ Taylor
approximation along this interpolating path:
\begin{equation}
\label{eq:LM-def}
L_M(t)
=
\sum_{j=0}^{M}
\frac{1}{j!}
\left.\frac{\partial^j}{\partial\theta^j}L_\theta(t)\right|_{\theta=0}.
\end{equation}
This gives a decomposition of the interaction function into a Taylor main term and a
Taylor residual: $\eta_r
=
\mathcal M_{r,M}
+
\mathcal R_{r,M}$ where
\begin{align}
\label{eq:main-term-def}
\mathcal M_{r,M}
&:=
\frac{1}{(p(1-p))^{r/2}}
\int \bigg\{ \prod_{i=1}^rK(t_i) \bigg\} L_M(t)\,
\nu^{\otimes r}(\dd t)\,,\\
\label{eq:residual-def}
\mathcal R_{r,M}
&:=
\frac{1}{(p(1-p))^{r/2}}
\int \bigg\{ \prod_{i=1}^rK(t_i)\bigg\}
\Big( L(t)-L_M(t)\Big) \,
\nu^{\otimes r}(\dd t).
\end{align}
This residual term admits the following bound:
\begin{proposition}
\label{prop:generic-residual}
Fix $r\ge2$ and $M\ge1$.  There exist constants $c,C>0$, depending
only on $r$ and $M$, such that if
$1\le a\le \sqrt d/2$ and 
$a^2\delta\le c$, then
\begin{equation}
\label{eq:generic-residual}
|\mathcal R_{r,M}|
\le
C
\left(\frac{p}{1-p}\right)^{r/2}
(a^2\delta)^{M+1}.
\end{equation}
\end{proposition}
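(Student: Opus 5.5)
The plan is to write the residual through Taylor's theorem with integral remainder along the interpolation $\theta\in[0,1]$:
\[
L(t)-L_M(t)=\int_0^1 \frac{(1-\theta)^M}{M!}\,\partial_\theta^{M+1}L_\theta(t)\,\dd\theta\,.
\]
Substituting into \eqref{eq:residual-def} and exchanging integrals gives
\[
|\mathcal R_{r,M}|\le \frac{1}{(p(1-p))^{r/2}}\sup_{\theta\in[0,1]}\frac{1}{(M+1)!}\bigg|\int \prod_{i=1}^r K(t_i)\,\partial_\theta^{M+1}L_\theta(t)\,\nu^{\otimes r}(\dd t)\bigg|\,.
\]
Since $|K|\le 1$ and $K(t_i)=-p$ off $\{t_i\ge a\}$, I would expand $\prod_i K(t_i)=\sum_{S\subseteq[r]}(-p)^{r-|S|}\prod_{i\in S}\mathbf 1\{t_i\ge a\}$. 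This reduces the problem to bounding $\int_{\{t_i\ge a,\,i\in S\}}|\partial_\theta^{M+1}L_\theta|\,\dd\nu^{\otimes r}$ by $C p^{|S|}(a^2\delta)^{M+1}$, which then gives $p^{r}(a^2\delta)^{M+1}/(p(1-p))^{r/2}$, the claimed bound up to constants. Coordinates not in $S$ are integrated over all of $\mathbb R$. The marginal/consistency structure of $f_{D,r,\Sigma}$ (projections of spherical vectors) should make these integrate trivially. Alternatively, I would simply bound them using that the derivative grows at most polynomially in $|t_j|$, against the subgaussian measure $\nu$.

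The core estimate is a pointwise bound on $\partial_\theta^{j}L_\theta(t)$. I would write $\log L_\theta(t)=\Phi(\theta,t)$ explicitly:
\[
\Phi=\log\frac{c_{D_\theta,r}}{c_{D_\theta,1}^r}-\tfrac12\log\det H_\theta+\tfrac{D_\theta-r-2}{2}\log\Big(1-\tfrac{\theta^2 t^\top H_\theta^{-1}t}{d}\Big)-\sum_i\tfrac{D_\theta-3}{2}\log\Big(1-\tfrac{\theta^2t_i^2}{d}\Big)\,.
\]
Next I would expand in $\theta$ and note two facts. First, $H_\theta^{-1}=I-\theta W+\theta^2W^2-\cdots$. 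Second, the $1/D_\theta=\theta^2/d$ corrections enter as $\theta^2(1/d)$. Hence each $\theta$-derivative of $\Phi$ produces either a factor $W_{ij}t_it_j$ or a factor $d^{-1}\cdot(\text{polynomial in }t)$; both are $\lesssim \delta(1+|t|^2)$ and up to degree $O(1)$. Analyticity in $\theta$ on a complex disk of radius $\asymp 1/\delta$, for $|t|\lesssim \sqrt d$ in the region where the log arguments stay away from zero, combined with Cauchy estimates should give
\[
|\partial_\theta^{j}L_\theta(t)|\le C\,\delta^{j}(1+\|t\|^2)^{j}\,L_\theta(t)\,e^{C\delta\|t\|^2}\,.
\]
There is also a correction for large $\|t\|$, handled separately.

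Integration against the tail region is the main obstacle. On $\{t_i\ge a\}$ the typical value is $t_i\approx a$, so a factor $(\delta\|t\|^2)^{M+1}$ should become $(a^2\delta)^{M+1}$. This requires that the tilted measure $L_\theta e^{C\delta\|t\|^2}\nu^{\otimes r}$ still places mass $\lesssim p^{|S|}$ on $\{t_i\ge a,\ i\in S\}$, with polynomial moments concentrated near $a$. For this I would use $a^2\delta\le c$: the tilt $e^{C\delta t^2}$ changes the one-dimensional tail $\nu([a,\infty))\approx\varphi(a)/a$ only by a constant factor, because $\delta a^2\le c$ and the overshoot $t-a$ is $O(1/a)$. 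Correlations $W_{ij}\le\delta$ likewise perturb the joint tail by at most $e^{O(a^2\delta)}$. I would make this rigorous using the explicit density \eqref{e:T.density}, with bounds such as $f_d(t+s)/f_d(t)\le e^{-ts+O(s^2)}$. The assumption $a\le\sqrt d/2$ keeps the $(1-t^2/d)$ factors bounded away from $0$ in the relevant range, and the far region $\|t\|\ge\sqrt d/2$ contributes $e^{-cd}$, which is negligible against $p^{r}(a^2\delta)^{M+1}$ since $\delta\ge d^{-1/2}$ and $p\ge e^{-O(a^2)}$. I expect the careful bookkeeping in this tail-tilting step to be the delicate part.
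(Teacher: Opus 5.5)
You have the paper's skeleton right: an integral-form Taylor remainder along $\theta$, a pointwise bound $|\partial_\theta^{M+1}L_\theta|\lesssim(\delta U)^{M+1}$ with $U(t)=1+\|t\|^2$, and a reduction to one-dimensional moments $\E[|K(T)|(1+T^2)^m]\le C_m\,p\,a^{2m}$ (your subset expansion amounts to $|K(t)|\le\mathbf 1\{t\ge a\}+p$). The gap is the range on which you use the pointwise bound. Your justification is that each $\theta$-derivative of $\Phi$ costs a factor $\lesssim\delta(1+\|t\|^2)$, together with analyticity on a disk of radius $\asymp 1/\delta$. That holds only on $\{U(t)\lesssim 1/\delta\}$, not for all $\|t\|\lesssim\sqrt d$.

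Two things go wrong outside that set. First, the $\theta^2/d$ corrections have unbounded degree in $t$. They include $-\theta^2(\|t\|^4-\sum_i t_i^4)/(4d)$, whose $\theta$-derivative has size $\|t\|^4/d$; this is not $\lesssim\delta\|t\|^2$ once $\|t\|^2\gg\delta d$, a range inside your cut $\|t\|^2\le d/4$. Second, $z\mapsto\log(1-z^2\langle t,H_z^{-1}t\rangle/d)$ has a branch point near $|z|=\sqrt d/\|t\|$. This lies inside the disk of radius $1/\delta$ as soon as $\|t\|^2\gtrsim\delta^2 d$. The natural Cauchy radius is $\asymp 1/(\delta U(t))$, and that covers $[0,1]$ only when $\delta U\lesssim 1$.

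So on $c_0/\delta<U(t)\le d/4$ your argument does not establish the bound you later integrate. On $\{\|t\|\ge\sqrt d/2\}$, a globally applied remainder formula would need control of $\partial_\theta^{M+1}L_\theta$ near the edge of the support, where $1-\theta^2\langle t,H_\theta^{-1}t\rangle/d\to 0$. You assert an $e^{-cd}$ contribution there but give no such bound.

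The missing idea is to cut at $U(t)=c_0/\delta$ instead, which is what the paper does.

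On $\Omega_0=\{U\le c_0/\delta\}$:
Lemma~\ref{l:log.likelihood.local.region.bounds} gives $|\Lambda_\theta|\le C\delta U\le Cc_0$, so $L_\theta=O(1)$. Hence $|\partial_\theta^{M+1}L_\theta|\le C(\delta U)^{M+1}$ with no $L_\theta e^{C\delta\|t\|^2}$ factor. The integral against $\nu^{\otimes r}$ then factorizes coordinatewise, and Lemma~\ref{l:cap-tail-moment} finishes. The correlated, tilted tail analysis you single out as the delicate step never arises.

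On $\Omega_1$:
The paper does not differentiate at all. It uses $|L-L_M|\le L+|L_M|$. Here $|L_M|\le CU^M$, because the $\theta^j$-coefficient of $L_\theta$ is a polynomial of degree $\le 2j$ in $t$ (Lemma~\ref{lmm:taylor-polynomial-growth}). Also $\int_{\Omega_1}L\,\dd\nu^{\otimes r}=\nu_H(\Omega_1)$, whose marginals are those of $\nu$ (Lemma~\ref{lmm:tail-contribution}). This gives $Ce^{-c/\delta}$ rather than $e^{-cd}$. The hypothesis $a^2\delta\le c$, together with $\log(1/p)\lesssim a^2$ (from $p\gtrsim f_d(a)/a$), is what makes $e^{-c/\delta}\lesssim p^r(a^2\delta)^{M+1}$.

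Your tilted route can be salvaged, but it takes more work. You would first prove, on $\{\|t\|^2\le d/8\}$, that $\Lambda_\theta\le C\delta U$. You would also need the derivative bound with an $e^{C\delta U}$ factor, via Cauchy estimates of radius $\asymp 1/(\delta U)$ centred at each $\theta\in[0,1]$. Then you would factorize and apply the $e^{\lambda T^2}$, $\lambda\le c/a^2$, form of Lemma~\ref{l:cap-tail-moment}. Even then you would still need the $L+|L_M|$ device near the boundary of the support.
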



Next, we compute the main term
$\mathcal M_{r,M}$ by identifying the degree $M$ of the first nonzero term for each $r$, then apply \prettyref{prop:generic-residual} to prove \prettyref{thm:taylor-eta2345}.

\subsection{Analysis of main term in Taylor expansion}
\label{sec:main-term}

Substituting
\eqref{eq:LM-def} into \eqref{eq:main-term-def}, 
the main term in general is 
    \[\mathcal M_{r,M}
    =
    \sum_{j=0}^{M} \mathcal{M}_r(j)\]
where the $j$-th term of the expansion is 
\begin{equation}
\label{eq:main-term-general}
\mathcal{M}_r(j)
:=
\frac{1}{j!(p(1-p))^{r/2}}
\int
\bigg\{ \prod_{i=1}^r K(t_i)\bigg\} 
\left.
\frac{\partial^j}{\partial\theta^j}L_\theta(t)
\right|_{\theta=0}
\nu^{\otimes r}(\dd t)\,.
\end{equation}
It is straightforward to verify that each Taylor coefficient of $L_\theta(t)$ is a polynomial in $t$. Because $K$ is centered, 
any non-zero contribution to $\mathcal{M}_r(j)$ must come from monomials $t_1^{a_1} \ldots t_r^{a_r}$ with all degrees $a_i\geq 1$. Any such monomial gives a contribution
\[\prod_{i=1}^r \Expect[T_i^{a_i}K(T_i)] = 
\prod_{i=1}^r m_{a_i}\,,\] where 
$m_a \equiv \Expect[T^a K(T)]$,  and $\Expect$ denotes expectation over $T\sim\nu$.  As we will see below, the dominant contributions come from individual degrees $a_i\in\{1,2\}$. We note the following lemma, which gives a simple relation between $m_1$ and $m_2$:

\begin{lemma}
\label{lmm:m1m2}
With the above notations, we have
    \[
    m_2 = \frac{d-1}{d}am_1
    \]
and $1 \le a \le ( 2 \log (1/p))^{1/2}$.
Furthermore, assuming $a\le \sqrt d/2$, we have
\[ \zeta^2 \asymp pa^{2} \lesssim h(p)\,,
\]
where we recall that $\zeta$ is defined by \eqref{eq:zeta-def}.

\begin{proof}
In this proof, we fix $d$ and suppress it from the notation.  We abbreviate $\mathbb{P}=\nu$, and write $\Expect$ for expectation with respect to $\nu$.  We also abbreviate $c\equiv c_{d,1}$ and $f(t)\equiv f_d(t)$. Define
    \[ g(t)
    = c \bigg(1-\frac{t^2}{d}\bigg)^{(d-1)/2}\,,
    \]
and note that
    \begin{align*}
    \frac{d}{dt} g(t) &= -\frac{d-1}{d}  t f(t)\,,\\
    \frac{d}{dt} [tg(t)]
        &= -(t^2-1) f(t)\,.
    \end{align*}
It follows that
    \begin{align*}
    &m_1
    = \Expect[T;T\ge a]
    = \int_a^{d^{1/2} } t f(t)\,dt
    = \frac{d}{d-1} g(a)
    = \frac{cd}{d-1} \bigg(1-\frac{a^2}{d}
        \bigg)^{(d-1)/2}\,,\\
    &\E[T^2; T \ge a]-p
    =\int_a^{d^{1/2}} (t^2-1) f(t)\,dt
    =ag(a)
    = \frac{d-1}{d}am_1\,.
    \end{align*}
Rearranging the last expression gives
    \[
    m_2
    = \E[T^2 K(T)]
    = \E[T^2;T\ge a]-p
    = \frac{d-1}{d}am_1\,,
    \]
which proves the first claim. Next, recalling that $p=\mathbb{P}(T\ge a)$, to bound $a\ge1$ it suffices to prove that $\mathbb{P}(T\ge 1)> p$. In fact it is easy to argue that $\mathbb{P}(T\ge 1)$ is lower bounded by an absolute constant: $T$ is equidistributed as
    \[
    \frac{d^{1/2} Z}{(Z^2 + S)^{1/2}}
    \]
where $Z$ is a standard gaussian random variable, and $S$ is a chi-square random variable with $d-1$ degrees of freedom, independent of $Z$. If $Z \ge 2^{1/2}$ and $S \le 2(d-1)$ then the above ratio is $\ge1$, so we conclude
    \[
    \mathbb{P}(T\ge1)
    \ge \mathbb{P}(Z \ge 2^{1/2}) \mathbb{P}(S \le 2(d-1))
    \ge \frac{\mathbb{P}(Z \ge 2^{1/2})}{2}
    \ge 0.03\,.
    \]
This proves $a\ge1$, as claimed. On the other hand the upper bound on $a$ is equivalent to the bound 
\[p = \mathbb{P}(T\ge a) \le \exp\bigg(-\frac{a^2}{2}\bigg)\,,\] which follows from the fact that $T$ is $1$-subgaussian (Lemma~\ref{lem:beta.mgf}). Next, recalling the definition \eqref{eq:zeta-def} of $\zeta$, it is clear that $\zeta\gtrsim p^{1/2}a$. For the upper bound, we recall that the integration by parts above gives
    \[m_1 = \frac{d}{d-1} f_d(a) 
    \bigg(1-\frac{a^2}{d}\bigg)
    \leq 2 f_d(a)\,.
    \]
It is shown later in 
Lemma~\ref{l:cap-tail-moment}
that under the assumption $1\le a\le \sqrt d/2$ we have
$p\geq c f_d(a)/a$ for some numerical constant $c$ (see display \eqref{eq:p-lb}). It  follows that
\[\zeta^2 = \frac{m_1^2}{p(1-p)}
\lesssim \frac{(ap)^2}{p} = pa^2
\lesssim h(p)\,,
\] as claimed.
\end{proof}
\end{lemma}

The following lemma provides a uniform bound on $\eta_2(x_1,x_2)$ whenever $\langle x_1,x_2\rangle\le 1-\epsilon$ for $\epsilon \in (0,1)$, and is used in the proof of \prettyref{lem:partial-sum-ge-6}.

\begin{lemma}\label{lem:eta2-moderate-overlap}
Assume that $d\gg (nh(p))^3$ and $1/n \lesssim p\ll 1$.  If fixed vectors 
$x_1,x_2\in\mathbb S^{d-1}$ satisfy $\langle x_1,x_2\rangle\le 1-\epsilon $ for any $\epsilon\in (0,1)$, then there is a constant $C>0$ such that 
\[
|\eta_2(x_1,x_2)|\le C
p^{\epsilon/(2-\epsilon)}\bigg(\log\frac1p\bigg)^{\epsilon/[2(2-\epsilon)]}
\,.
\]
\end{lemma}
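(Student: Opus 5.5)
The plan is to bound $\eta_2$ through the joint upper tail of the two projections. With $T_i=\sqrt d\,\langle x,x_i\rangle$ and $x\sim\Unif(\mathbb S^{d-1})$, the definition \eqref{eq:eta-def} gives
\[
\eta_2(x_1,x_2)=\frac{\Pbb(T_1\ge a,T_2\ge a)-p^2}{p(1-p)},\qquad\text{so}\qquad |\eta_2|\le \frac{\Pbb(T_1\ge a,T_2\ge a)}{p(1-p)}+\frac{p}{1-p}.
\]
Since $p\ll1$ and $\epsilon/(2-\epsilon)<1$, the term $p/(1-p)$ is already of smaller order than $p^{\epsilon/(2-\epsilon)}$. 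It therefore suffices to show
\[
\Pbb(T_1\ge a,T_2\ge a)\lesssim p^{2/(2-\epsilon)}\Big(\log\tfrac1p\Big)^{\epsilon/[2(2-\epsilon)]}.
\]

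The key reduction is to a one-dimensional tail. On the event $\{T_1\ge a,\,T_2\ge a\}$ we have $\sqrt d\,\langle x,x_1+x_2\rangle\ge 2a$. Write $\rho=\langle x_1,x_2\rangle\le 1-\epsilon$, so that $\|x_1+x_2\|^2=2+2\rho\le 4-2\epsilon$. (If $x_1+x_2=0$ the event is empty and there is nothing to prove.) Setting $u=(x_1+x_2)/\|x_1+x_2\|$, the event is contained in $\{T\ge b\}$, where $T=\sqrt d\,\langle x,u\rangle\sim\nu$ and
\[
b:=a\sqrt{2/(2-\epsilon)}>a .
\]
If $b>\sqrt d$ the probability is zero. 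Otherwise I need a sharp Mills-type bound $\Pbb(T\ge b)\lesssim f_d(b)/b$, obtained by comparing with the density and integrating by parts as in Lemma~\ref{lmm:m1m2}: the identity $\int_b^{\sqrt d} t f_d(t)\,dt=\frac{d}{d-1}g(b)\lesssim f_d(b)$ together with $t\ge b$ on the range of integration gives $\Pbb(T\ge b)\le b^{-1}\E[T;T\ge b]\lesssim f_d(b)/b$. Then I would use $(1-t^2/d)^{(d-3)/2}\le \exp(-t^2(d-3)/(2d))\le C e^{-t^2/2}$, which holds because $b^2\lesssim a^2\lesssim\log(1/p)\ll d$. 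Together these give
\[
\Pbb(T\ge b)\lesssim \frac{e^{-a^2/(2-\epsilon)}}{a}.
\]

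Next I convert $e^{-a^2/2}$ into $p$ via the lower bound $p\ge c f_d(a)/a$ from display \eqref{eq:p-lb}, which requires $1\le a\le\sqrt d/2$; that is guaranteed by Lemma~\ref{lmm:m1m2} and $\log(1/p)\ll d$. I also need the matching lower bound on the density, $f_d(a)\gtrsim e^{-a^2/2}$. This holds because $\log(1-a^2/d)\ge -a^2/d-a^4/d^2$, so the correction in the exponent is $O(a^2/d+a^4/d)=o(1)$ since $a^4\lesssim\log^2(1/p)\ll d$; here I use $d\gg(nh(p))^3$ and $p\gtrsim 1/n$. It also uses $c_{d,1}\asymp1$. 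Hence $e^{-a^2/2}\lesssim ap$, and raising this to the power $2/(2-\epsilon)$,
\[
\Pbb(T_1\ge a,T_2\ge a)\lesssim \frac{(ap)^{2/(2-\epsilon)}}{a}=a^{\epsilon/(2-\epsilon)}\,p^{2/(2-\epsilon)}.
\]
Dividing by $p(1-p)$ and using $a\le(2\log(1/p))^{1/2}$ yields exactly the claimed bound $p^{\epsilon/(2-\epsilon)}(\log(1/p))^{\epsilon/[2(2-\epsilon)]}$.

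I expect the main obstacle to be getting the logarithmic factor exactly right. A plain subgaussian bound (Lemma~\ref{lem:beta.mgf}) for $\Pbb(T\ge b)$ loses the $1/b$ Mills factor and only gives the power $(\log(1/p))^{1/(2-\epsilon)}$. So the two-sided comparisons of $f_d$ with the Gaussian density, at the two points $a$ and $b$, must be done carefully, with the constants controlled uniformly in the regime $a^2\lesssim\log(1/p)\ll\sqrt d$.
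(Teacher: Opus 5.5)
Your proof is correct and follows essentially the paper's route: you reduce to the cap in direction $(x_1+x_2)/\|x_1+x_2\|$ at level $b=a\sqrt{2/(2-\epsilon)}$, use the integration-by-parts Mills bound $\Pbb(T\ge b)\lesssim f_d(b)/b$, and use the lower bound $p\gtrsim f_d(a)/a$. The only difference is how you compare $f_d(b)$ with $f_d(a)^{2/(2-\epsilon)}$. The paper applies Bernoulli's inequality $1-\alpha x\le(1-x)^{\alpha}$ directly to the spherical density, which needs only $a^2\le d/4$. You instead sandwich both densities between Gaussian ones, which additionally requires $a^4\lesssim d$; that condition does hold under the stated assumptions.
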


\begin{proof}
Take $a=d^{1/2}\tau$ and $T$ as before, so $p=\mathbb{P}(T\ge a)$. 
Recall the probability density function given in~\eqref{e:T.density}.
We use similar calculations as in the proof of \prettyref{lmm:m1m2}. It is straightforward to verify that $c_{d,1}\asymp1$.
It follows from the assumptions that $1 \le a \ll d^{1/4}$. For $t\in[a,a+1/a]$, we have
    \[
    f_d(t) \ge c_{d,1} \bigg(
    1-\frac{a^2+2+1/a^2}{d}\bigg)^{(d-1)/3}
    \ge c' f_d(a)\,,
    \]
where $c'$ is an absolute constant. It follows that
    \[
    p=\mathbb{P}(T\ge a)
    \ge \int_{a}^{a+1/a} f_d(t)\,dt
    \ge \frac{c' f_d(a)}{a}\,.
    \]
On the other hand, with $g$ is as in the proof of \prettyref{lmm:m1m2}, we have
    \begin{align*}
    \mathbb{P}\bigg(T\ge \sqrt{\frac{2}{2-\epsilon}} a\bigg)
    &= \int_{(2/(2-\epsilon))^{1/2}a}^{d^{1/2}} f_d(t)\,dt
    \le \frac{((2-\epsilon)/2)^{1/2}}{a} \int_{(2/(2-\epsilon))^{1/2}a}^{d^{1/2}} t f_d(t)\,dt \\
    &=\frac{d}{d-1}
    \frac{((2-\epsilon)/2)^{1/2}}{a} g\bigg( \sqrt{\frac{2}{2-\epsilon}} a\bigg)
    \le \frac{C_\epsilon }{a} \bigg(1-\frac{2a^2}{(2-\epsilon)d}\bigg)^{(d-1)/2}\,.
    \end{align*}
Applying the bound $1-\alpha x \le (1-x)^{\alpha}$ for $\alpha>1$ gives
    \[
    \mathbb{P}\bigg(T\ge \sqrt{\frac{2}{2-\epsilon}} a\bigg)
    \le  \frac{C_\epsilon }{a}  \bigg(1-\frac{a^2}{d}\bigg)^{(d-3)/(2-\epsilon)}
    \le C_\epsilon a^{\epsilon/(2-\epsilon)} p^{2/(2-\epsilon)}\,,
    \]
where the last inequality follows from the previous lower bound on $p$.


Now suppose $x_1,x_2\in\mathbb{S}^{d-1}$ with $\langle x_1,x_2\rangle\le 1-\epsilon$. If $\langle x,x_i\rangle\ge\tau$ for both $i=1,2$, then
    \[\bigg\langle x, \frac{x_1+x_2}{\|x_1+x_2\|}\bigg\rangle
    \ge \frac{2\tau}{\|x_1+x_2\|}
    = \frac{2\tau}{[2(1+\langle x_1,x_2\rangle)]^{1/2}}
    \ge \sqrt{\frac{2}{2-\epsilon}}\tau\,.
    \]
It follows from the preceding bounds that
    \[
    \mathbb{P}_x(\langle x,x_1\rangle\ge\tau,\langle x,x_2\rangle\ge\tau)
    \le \mathbb{P}_x\bigg(
        \bigg\langle x, \frac{x_1+x_2}{\|x_1+x_2\|}\bigg\rangle
        \ge \sqrt{\frac{2}{2-\epsilon}}\tau
        \bigg) \le C_\epsilon a^{\epsilon/(2-\epsilon)} p^{2/(2-\epsilon)}\,.
    \]
Since $a^2 \lesssim \log(1/p)$ by \prettyref{lmm:m1m2}, we conclude
    \[
|\eta_2(x_1,x_2)|
\le
\frac{
\mathbb P_x\{\langle x,x_1\rangle\ge\tau, \langle x,x_2\rangle\ge\tau\}
+p^2}{p(1-p)}
\lesssim p^{\epsilon/(2-\epsilon)}\bigg(\log\frac1p\bigg)^{\epsilon/[2(2-\epsilon)]}\,,
\] as claimed.
\end{proof}


We refer to  \eqref{eq:logL-explicit} for the log-likelihood ratio $\Lambda_\theta(t)=\log L_\theta(t)$. 
We now formally expand this expression in $\theta$. For the normalizing constants, we use the asymptotic expansion of the log-gamma function to obtain
    \[\log \frac{c_{D_\theta,r}}{(c_{D_\theta,1})^2}
    =\log \bigg\{ 
    \frac{\Gamma(D_\theta/2) \Gamma((D_\theta-1)/2)^{r-1}}{\Gamma(D_\theta/2)^r} \bigg\} 
    = -\frac{r(r-1)}{4d}\theta^2 + O(\theta^4)\,.
    \]
Since $H_\theta=I+\theta W$,
we can expand
    \begin{align*}
    \frac12 \log \det H_\theta
    &= \frac12 \log \bigg\{
    1 - \theta^2 \sum_{i<j} W_{ij} W_{ji}
        + 2 \theta^3  \sum_{i<j<k} W_{ij} W_{jk} W_{ki}  + O(\theta^4)
    \bigg\} \\
    &= -\theta^2 \frac{\tr(W^2)}{4}
    + \theta^3 \frac{\tr(W^3)}{6}
    + O(\theta^4)\,.
    \end{align*}
For the remaining terms of $\log L_\theta(t)$, direct calculation gives
    \begin{align*}
    \frac{(D_\theta-3)}{2} \log \bigg(
        1-\frac{(t_i)^2}{D_\theta}
        \bigg)
    &= -\frac{(t_i)^2}{2}
    - \theta^2\bigg\{
    \frac{(t_i)^4}{4d}
    -\frac{3(t_i)^2}{2d}
    \bigg\} + O(\theta^4)\,,
    \\
    \frac{D_\theta-r-2}{2}
    \log \bigg(1 - 
    \frac{\langle t,(H_\theta)^{-1} t\rangle}{D_\theta}
    \bigg)
    &= -\frac{\|t\|^2}{2}
    + \theta \frac{\langle t, Wt\rangle}{2}
    + \theta^2 \bigg\{
    - \frac{\langle t, W^2 t\rangle}{2}
    + \frac{(r+2)\|t\|^2}{2d} 
    - \frac{\|t\|^4}{4d}
    \bigg\}\\
    &\qquad + \theta^3 \bigg\{
    -\frac{(r+2) \langle t,W t\rangle}{2d}
    + \frac{\langle t,W^3 t\rangle }{2}
    + \frac{\|t\|^2 \langle t,W t\rangle }{2d}
    \bigg\}
    + O(\theta^4)\,.
    \end{align*}
Combining the above calculations gives
\[
\Lambda_\theta(t) \triangleq \log L_\theta(t)
=
\theta B(t)
+\theta^2 B_2(t)+\theta^3 B_3(t)+O(\theta^4),
\]
where $B(t) \equiv \langle t,Wt\rangle/2$, 
    \begin{align*}
    B_2(t)
    &=
    \frac{\tr(W^2)}4
    -\frac{\langle t, W^2t\rangle } 2
    -\frac{1}{4d}\bigg\{\|t\|^4-\sum_{i=1}^r (t_i)^4\bigg\}
    +\frac{r-1}{2d}\|t\|^2
    -\frac{r(r-1)}{4d}\,, \\
    B_3(t)
    &= - \frac{\tr(W^3)}{6}
    -\frac{(r+2) \langle t,W t\rangle}{2d}
    + \frac{\langle t,W^3 t\rangle }{2}
    + \frac{\|t\|^2 \langle t,W t\rangle }{2d}\,.
    \end{align*}
Note that every monomial in $B_2$ involves at most two coordinates $i\in[r]$, while every monomial in $B_3$ involves at most three coordinates.
It follows that
\begin{equation}\label{e:formal.expansion.L.theta}
L_\theta(t)
=
1+\theta B(t)
+\theta^2\left\{\frac12 B(t)^2+B_2(t)\right\}
+\theta^3\left\{\frac16 B(t)^3+B(t)B_2(t)+B_3(t)\right\}
+O(\theta^4)\,. 
\end{equation}
Thus, for $r=3$ and $r=4$, the only second-order terms that can survive
centering come from $B(t)^2/2$.  For $r=5$, the only third-order terms that
can depend on all five coordinates come from $B(t)^3/6$, since
$B(t) B_2(t)$ involves at most four coordinates and $B_3(t)$ involves at most
three.


\begin{proof}[\hypertarget{proof:t.taylor-eta2345}{Proof of \prettyref{thm:taylor-eta2345}}]
Recall \eqref{eq:main-term-def},
\eqref{eq:residual-def}, and \eqref{eq:main-term-general}.
Since $K(T_i)$ is centered, we have $\mathcal{M}_r(0)=0$ for all $r\ge1$. For $r=2$, we take $M=1$.  The residual estimate \prettyref{prop:generic-residual} gives
\[
|\mathcal R_{2,1}|
\le
C\frac{p}{1-p}a^4\delta^2\,.
\]
For the main term, recalling \eqref{e:formal.expansion.L.theta},
the only contribution to $\mathcal{M}_{2,1}$ comes from $j=1$:
    \[
    \mathcal{M}_{2,1}
    =\mathcal{M}_2(1)
    =
    \int
    [W_{12} t_1 t_2]
    \prod_{i=1}^2 \kappa(t_i) f_d(t_i) \,dt_i
    = \frac{W_{12} m_1^2}{p(1-p)}
    =\zeta^2 W_{12}\,,
    \]
where we recall that $\zeta$ is defined by \eqref{eq:zeta-def}. This proves the assertion for $r=2$.

For $r=3$, we take $M=2$. Then \prettyref{prop:generic-residual} gives
\[
|\mathcal R_{3,2}|
\le
C\left(\frac{p}{1-p}\right)^{3/2}a^6\delta^3.
\]
Recalling \eqref{e:formal.expansion.L.theta},
each monomial in the $j=1$ coefficient involves at most two coordinates, so we have $\mathcal{M}_3(1)=0$. In the $j=2$ term, the only monomials involving all three coordinates come from $B(t)^2/2$: thus
    \begin{align*} 
    \mathcal{M}_{3,2}
    & =\mathcal{M}_3(2)
    = 
    \int 
    \Big\{
    W_{12}W_{13}t_1^2t_2t_3
+
W_{12}W_{23}t_1t_2^2t_3
+
W_{13}W_{23}t_1t_2t_3^2
\Big\} \prod_{i=1}^3 \kappa (t_i) f_d(t_i)\,dt_i
    \\
    &= \frac{(m_1)^2 m_2}{(p(1-p))^{3/2}}
    \Big\{ W_{12}W_{13}
+
W_{12}W_{23}
+
W_{13}W_{23}\Big\} \\
    &= \zeta^2 
    \frac{d-1}{d} a \zeta 
        \Big\{ W_{12}W_{13}
+
W_{12}W_{23}
+
W_{13}W_{23}\Big\}
    \end{align*}
This proves the assertion for $r=3$.

For $r=4$, we again take $M=2$. Then \prettyref{prop:generic-residual} gives
\[
|\mathcal R_{4,2}|
\le
C\left(\frac{p}{1-p}\right)^2a^6\delta^3\,.
\]
By similar reasoning as above, we have $\mathcal{M}_4(1)=0$. In the $j=2$ term, the only monomials involving all four coordinates come from $B(t)^2/2$, and involve pairs of disjoint edges:
    \begin{align*}
    \mathcal{M}_{4,2}
    & =\mathcal{M}_4(2)
    =\int \Big\{
    W_{12}W_{34}t_1t_2t_3t_4
+
W_{13}W_{24}t_1t_2t_3t_4
+
W_{14}W_{23}t_1t_2t_3t_4
    \Big\}
    \prod_{i=1}^4 \kappa(t_i) f_d(t_i)\,dt_i \\
    &= \frac{(m_1)^4}{(p(1-p))^2}
    \Big\{ W_{12}W_{34}
+
W_{13}W_{24}
+
W_{14}W_{23}\Big\}
    = \zeta^4 \Big\{ W_{12}W_{34}
+
W_{13}W_{24}
+
W_{14}W_{23}\Big\}\,.
    \end{align*}
This proves the assertion for $r=4$.

For $r=5$, we take $M=3$. Then \prettyref{prop:generic-residual} gives
\[
|\mathcal R_{5,3}|
\le
C\left(\frac{p}{1-p}\right)^{5/2}a^8\delta^4\,.
\]
By similar reasoning as above, we have $\mathcal{M}_5(1)=\mathcal{M}_5(2)=0$.
For the $j=3$ term, the only monomials involving all five coordinates come from $B(t)^3/6$, and involve choosing three distinct edges covering all the vertices $\{1,\ldots,5\}$.
Such a graph has one vertex of degree two and one disjoint edge on the two
remaining vertices, and the sum of the corresponding edge products is exactly
$\mathsf{S}_{5,3}$.  The associated monomial has one coordinate squared and the
other four coordinates to the first power, so we conclude
\[
\mathcal M_{5,3}
=\mathcal M_5(3)
= \frac{m_2(m_1)^4}{(p(1-p))^{5/2}} \mathsf{S}_{5,3}
=\frac{d-1}{d} a \zeta^5 \mathsf{S}_{5,3}\,.
\]
This proves the assertion for $r=5$.
Moreover, for $r=5$, if we choose $M=2$, then the main term in the Taylor expansion vanishes, i.e., $\mathcal M_{5,2}=0$.
Indeed, the $j=0,1,2$ terms vanish after centering, because no monomial there can
depend on all five coordinates.
This, together with Proposition~\ref{prop:generic-residual}, then implies
\[
|\eta_5(x_1,\ldots,x_5)|=|\mathcal R_{5,2}|\le
C\left(\frac{p}{1-p}\right)^{5/2}(a^2\delta)^3\,,
\]
which concludes the proof.
\end{proof}

\subsection{Bound on residual term in Taylor expansion}
\label{sec:residual-bound}

In this section we prove \prettyref{prop:generic-residual}.
Throughout this section write
\[
U(t)=1+\sum_{i=1}^r t_i^2 .
\] For what follows, we recall the product rule
    \begin{equation}\label{e:deriv.product.rule}
    \bigg(\frac{d}{dx}\bigg)^m
    [f(x) g(x)]
    = \sum_{\ell=0}^m
    \binom{m}{\ell} f^{(\ell)}(x)
        g^{(m-\ell)}(x)\,.
    \end{equation}
We also recall Fa\`a di Bruno's chain rule:
    \begin{equation}\label{e:faa.di.bruno}
    \bigg(\frac{d}{dx}\bigg)^m
    f(g(x))
    = \sum_{\underline{k}}
    \frac{m!}{(k_1)! \cdots (k_m)!}
    f^{(k)}(g(x))
    \prod_{j=1}^m
    \bigg(\frac{g^{(j)}(x)}{j!}\bigg)^{k_j}\,,
    \end{equation}
where the sum is over all nonnegative tuples $\underline{k}\equiv (k_1,\ldots,k_m)$ satisfying
    \[
    \sum_{j=1}^m j k_j=m\,,
    \]
and we abbreviate $k\equiv k_1 + \ldots + k_m$. Now, recalling  \eqref{eq:cDr}, we denote
\begin{equation}\label{e:rewrite.gamma.factors}
\Gamma_{r,d}(\theta)
:=\log \frac{c_{D_\theta,r}}
    {(c_{D_\theta,1})^r}
=\log \bigg\{
\frac{\Gamma((D_\theta-1)/2)^r }
    {\Gamma((D_\theta-r)/2) 
    \Gamma(D_\theta/2)^{r-1}}
\bigg\}
\,.
\end{equation}
The logarithm of \eqref{eq:Ltheta} can be written as
\begin{equation}
\label{eq:logL-explicit}
\Lambda_\theta(t):=\log L_\theta(t)
\equiv \textup{I}_\theta(t)+\textup{II}_\theta(t)
+\textup{III}_\theta(t)\,,
\end{equation}
where $\Lambda_0\equiv 0$ by continuity, and we decompose
\begin{align} \nonumber 
\textup{I}_\theta(t)
&\equiv \Gamma_{r,d}(\theta)
-\frac12\log\det H_\theta\,,\\ \nonumber 
\textup{II}_\theta(t)
&\equiv \frac{d}{2\theta^2}
\bigg\{ 
\log\bigg(1-\frac{\theta^2}{d}
\langle t, (H_\theta)^{-1}t \rangle \bigg)
-\sum_{i=1}^r\log\bigg(1-\frac{\theta^2}{d}t_i^2\bigg)
\bigg\}\,,\\
\textup{III}_\theta(t)
&\equiv -\frac{r+2}{2}
\log\bigg(1-\frac{\theta^2}{d}
\langle t, (H_\theta)^{-1}t \rangle
\bigg)
+\frac32\sum_{i=1}^r\log\bigg(1-\frac{\theta^2}{d}t_i^2\bigg)\,.
\label{eq:logL-rewritten}
\end{align}
We will refer to this decomposition in some of the analysis that follows.

\begin{lemma}\label{l:log.likelihood.local.region.bounds}
Fix $r\ge2$ and $N\ge1$. For $\Lambda_\theta=\log L_\theta$ as defined by \eqref{eq:logL-explicit}, we have the bounds
\begin{align}
\label{eq:log-size-bound}
\sup\bigg\{|\Lambda_\theta(t)|
:0\le\theta\le1
\bigg\}
&\le
C_{r}\delta U(t)\,,\\
\label{eq:log-derivative-bound}
\sup\bigg\{ \bigg|
\frac{\partial^j}{\partial\theta^j}\Lambda_\theta(t)
\bigg| : 0 \le \theta \le 1\bigg\}
&\le
C_{r,j}[\delta U(t)]^j \,,
\end{align}
for all $U(t)\le c_0/\delta$, where the second line holds for all $1\le j\le N$.
\end{lemma}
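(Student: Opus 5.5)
We bound each of the three pieces $\textup{I}_\theta$, $\textup{II}_\theta$, $\textup{III}_\theta$ in \eqref{eq:logL-explicit} separately, treating them as analytic functions of $\theta$ on a complex neighborhood of $[0,1]$ and reading off derivative bounds from Cauchy estimates. Concretely, we set $s=\theta^2/d$ and $A_\theta=(H_\theta)^{-1}-I$. Since $H_\theta=I+\theta W$ with $\|W\|_{\operatorname{op}}\le r\delta$, for $\delta\le c_r$ we have the Neumann expansion $A_\theta=\sum_{k\ge1}(-\theta W)^k$. Hence $\langle t,A_\theta t\rangle$ is analytic in $\theta$ for $|\theta|\le 2$, bounded by $C_r\delta\|t\|^2$ there. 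Each $\theta$-derivative of it is also bounded by $C\delta\|t\|^2$, and in fact the $j$th derivative carries $\delta^j$.

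For $\textup{I}_\theta$: $\log\det H_\theta=\tr\log(I+\theta W)$ is a power series in $\theta W$ starting at degree $2$ (since $\tr W=0$), and its $j$th derivative is $O_r(\delta^{\max(j,2)})$. The Gamma term $\Gamma_{r,d}(\theta)$ is a smooth function of $\theta^2/d\le 1/d\le\delta^2$ vanishing at $0$, by Stirling's expansion of $\log\Gamma$. Its derivatives are therefore $O_r(\delta^2)$, which is at most $C(\delta U)^j$ once we use $U\ge1$ and $\delta\le1$ and keep $j\le 2$. For higher $j$ we use the fact that $\Gamma_{r,d}$ is a function of $\theta^2/d$, so its $j$th $\theta$-derivative is $O(d^{-\lceil j/2\rceil})\le O(\delta^j)$.

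The main work is $\textup{II}_\theta$. We write it as
\[\textup{II}_\theta=\frac{1}{2s}\Big\{\log\big(1-s\|t\|^2-s\langle t,A_\theta t\rangle\big)-\sum_i\log(1-st_i^2)\Big\}.\]
We Taylor-expand in $s$: the order-$s^{-1}\cdot s$ term is $-\tfrac12\langle t,A_\theta t\rangle$, which is $O(\delta U)$. The higher terms are $s^{k-1}$ times polynomials of degree $2k$ in $t$. We then check that the surviving combinations are controlled by $s^{k-1}U^k\le (U/d)^{k-1}U\le\delta^{2(k-1)}U^k$ times constants. This is summable exactly when $sU\lesssim \delta^2U\le c_0\delta$ and $\delta U\le c_0$; the same condition makes the arguments of all logarithms stay in a disk around $1$ for complex $|\theta|\le 2$. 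Because $\textup{II}_\theta$ is analytic in $\theta$ on a fixed complex disk with modulus $\le C\delta U$, Cauchy's estimate bounds every derivative by $C_j\delta U$. To get the sharper $(\delta U)^j$ we rescale: $\Lambda$ depends on $\theta$ only through $\theta W$ and $\theta^2/d$. Both remain small for complex $|\theta|\le \rho:=c/(\delta U)^{1/2}$, or more cleanly $|\theta|\le c/\delta$ with $U$ fixed, as long as the arguments of the logs stay near $1$. On that larger disk we would prove $|\Lambda|\le C$, and then Cauchy gives $|\partial^j_\theta\Lambda|\le C_j\rho^{-j}$ on $[0,1]$.

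Choosing the radius is the main obstacle. The natural bound on the disk of radius $\rho$ is $|\Lambda|\lesssim \rho\delta U+\rho^2 U/d+\dots$. Taking $\rho=c/(\delta U)$ makes the $\theta W$ contributions $O(1)$. The $\theta^2/d$ contributions become $\rho^2U\cdot U/d=O(U^2/(d\delta^2U^2))=O(1)$, since $d\delta^2\ge1$. We must also verify $\|\rho W\|_{\operatorname{op}}<1/2$, which holds because $U\ge1$. So the plan is to show $|\Lambda_\theta(t)|\le C_r$ for complex $|\theta|\le c/(\delta U)$ whenever $\delta U\le c_0$. Cauchy then gives \eqref{eq:log-derivative-bound}, and \eqref{eq:log-size-bound} follows from $\Lambda_0=0$ together with the $j=1$ bound. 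The term $\textup{III}_\theta$ is handled identically and is in fact smaller: its argument is $O(\rho^2U/d)=O(1)$ on the same disk.
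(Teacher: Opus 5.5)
Your argument is correct, but for the $t$-dependent pieces it takes a different route from the paper.

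The paper works only with real $\theta\in[0,1]$. It writes $\partial_\theta^j\log\det H_\theta$ and $\partial_\theta^j (H_\theta)^{-1}$ explicitly via the eigendecomposition of $W$, and pushes these through Fa\`a di Bruno and the product rule for $\textup{III}_\theta$. It then differentiates the series for $\textup{II}_\theta$ term by term, tracking bounds like $(C_rU)^k k^j\delta^{\max\{2(k-1),j\}}$ that are summed over $k$.

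You instead note that these pieces depend on $\theta$ only through $\theta W$ and $\theta^2/d$, and continue them holomorphically to $|\theta|\le\rho=c/(\delta U)$. On that disk, $|\theta|\,\|W\|_{\operatorname{op}}\,U\le rc$ and $|\theta|^2U^2/d\le c^2/(d\delta^2)\le c^2$. These keep every logarithm's argument near $1$ and make the pieces $O_r(1)$. Cauchy's estimate on disks of radius $\rho/2\ge 1$ about points of $[0,1]$ (which requires $c_0\le c/2$) then gives $j!\,C_r(2\delta U/c)^j$ for all $j$ at once. The size bound follows from $\Lambda_0=0$. This is much shorter, gives explicit $j$-dependence, and explains why $1/(\delta U)$ is the natural scale in $\theta$.

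The one limitation is the normalizing term. $\Gamma_{r,d}(\theta)=F_r(\theta^2/d)$ is not holomorphic in any neighborhood of $\theta=0$. The poles of $\Gamma(z)$ at $z=-n$, $n\ge1$, create logarithmic branch points at $\theta=\pm i\sqrt{d/(2n)}$, which accumulate at the origin. Equivalently, the Stirling series for $\log\Gamma(z-1/2)-\log\Gamma(z)$ diverges. So your closing plan of bounding $|\Lambda_\theta(t)|$ on the complex disk must be applied to $\Lambda_\theta-\Gamma_{r,d}(\theta)$ only.

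Your separate real-variable bound $|\partial_\theta^j\Gamma_{r,d}|\lesssim d^{-\lceil j/2\rceil}\le\delta^j$ is therefore genuinely needed; it is the same as the paper's. The uniform control of $F_r^{(m)}$ on $s\in[0,1/d]$ that it relies on comes from the differentiated Stirling-remainder estimates the paper cites, not from the asymptotic expansion alone.
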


\begin{proof}
On the region $U(t)\le c_0/\delta$, all logarithmic arguments in
\eqref{eq:logL-explicit} stay bounded away from zero. We now proceed to analyze separately each of the terms in the decomposition~\eqref{eq:logL-rewritten}:



\paragraph{Term I.}
Recall that $H_\theta=I+\theta W$. Since
$\norm{W}_{\operatorname{op}}\le C_r\delta$, $(H_\theta)^{-1}$ has operator
norm bounded by $C_r$. Let $\lambda_i$ ($1\le i\le r$) be the eigenvalues of $W$. Then, for $j\ge1$, we have
    \begin{align*}
    \frac{\partial^j}{\partial \theta^j}
    \log \det H_\theta
    &= 
    \frac{\partial^j}{\partial \theta^j}
    \sum_{i=1}^r \log (1+\theta\lambda_i)\\
    &= (-1)^{j-1}(j-1)!
    \sum_{i=1}^r
    \bigg(\frac{\lambda_i}{1+\theta\lambda_i}
    \bigg)^j 
    = (-1)^{j-1}(j-1)!
    \tr[((H_\theta)^{-1} W)^j]\,,
    \end{align*}
using that $H_\theta$ and $W$ commute.
Therefore the determinant term satisfies
\begin{align*}
& |\log\det H_\theta|\le C_r\delta\,, \\
&\max_{1\le j\le N} \bigg|
\frac{\partial^j}{\partial\theta^j}
\log\det H_\theta
\bigg|
\le C_{r,j}\delta^j \le C_{r,j} [\delta U(t)]^j\,.
\end{align*}
We now turn to the normalization term $\Gamma_{r,d}(\theta)$. The shifted log-gamma Stirling expansion
(see 
\cite[\href{https://dlmf.nist.gov/5.11.E8}{(5.11.8)}]{NIST:DLMF} and 
\cite[Eq.~(2) and Theorem~3]{Nemes2013HermiteGamma})
gives
    \begin{equation}\label{e:shifted.log.gamma.expansion}
    H_u(z) 
    \equiv \log\Gamma(z-u)-\log\Gamma(z)+u\log z
    =H_{u,\le N}(z) + H_{u,>N}(z)\,,
    \end{equation}
where $H_{u,\le N}(z)$ gives the expansion up to order $N$, and $H_{u,>N}(z)$ is the remainder term:
    \begin{align*}
    H_{u,\le N}(z)
    &\equiv \sum_{m=1}^N \frac{a_m(u)}{z^m}\,,\\
    H_{u,>N}(z)
    &= O_{r,N}\bigg(\frac1{z^{N+1}}\bigg)\,,
    \end{align*}
where the last estimate holds uniformly over $0\le u\le r/2$. Moreover, the same estimate holds after differentiating up to order $N$, that is to say,
    \[
    \frac{d^\ell}{dz^\ell} H_{u,>N}(z)
    = O_{r,N}\bigg(\frac{1}{z^{N+1+\ell}}\bigg)\,,
    \]
again uniformly over $0\le u\le r/2$. Then, by the chain rule \eqref{e:faa.di.bruno},
    \[
    \bigg|\frac{d^m}{ds^m}
    H_{u,>N}\bigg(\frac{1}{2s}\bigg)\bigg|
    \le \sum_{\underline{k}}
    \frac{m! O_{r,N}(s^{N+1+m})}{(k_1)! \cdots (k_m)!}
    \prod_{j=1}^m \bigg( \frac{1}{s^{j+1}}\bigg)^{k_j}
    \le C_{r,N}
    \sum_{k=1}^m \frac{s^{N+1+m}}{s^{m + k}}
    \le C_{r,N} s^{N+1-m}\,,
    \]
uniformly over $0\le s\le 1/d$ and $1 \le m \le N$. 
Meanwhile, $H_{u,\le N}(1/(2s))$ is a polynomial in $s$, and is easy to differentiate directly with respect to $s$. It follows that
    \begin{equation}\label{e:H.u.s.derivative.bound}
    \bigg|
    \frac{d^m}{ds^m}H_u\bigg(\frac{1}{2s}\bigg)\bigg|
    \le 
    \bigg|
    \frac{d^m}{ds^m}H_{u,\le N}
        \bigg(\frac{1}{2s}\bigg)\bigg|
    +    \bigg|
    \frac{d^m}{ds^m}H_{u,>N}\bigg(\frac{1}{2s}\bigg)\bigg|
    \le C_{r,N}\,,
    \end{equation}
uniformly over $0\le s\le 1/d$ and $1 \le m \le N$.
Now, recalling \eqref{e:rewrite.gamma.factors}, we set $z\equiv D_\theta/2 \equiv 1/(2s)$, and rewrite
\begin{align} \nonumber
\Gamma_{r,d}(\theta)
&=
r\bigg\{\log\Gamma(z-1/2)-\log\Gamma(z)+\frac12\log z\bigg\}
-
\bigg\{\log\Gamma(z-r/2)-\log\Gamma(z)+\frac r2\log z\bigg\} \\
&= rH_{1/2}(z)-H_{r/2}(z) \equiv F_r(s)\,.
\label{eq:Gamma}
\end{align}
From the expansion \eqref{e:shifted.log.gamma.expansion}, we have
    \[
|\Gamma_{r,d}(\theta)|
= |F_r(s)|
\le C_r s \le \frac{C_r}{d} \le C_r\delta U(t)\,,
\]
uniformly over $0 \le \theta\le 1$, using that $\delta \ge d^{-1/2}$. From the estimate \eqref{e:H.u.s.derivative.bound}, we have
\[
\bigg|
\frac{d^m}{ds^m} F_r(s) \bigg|\le C_{r,N}\,,
\]
uniformly over $0\le s\le 1/d$ and $1 \le m \le N$. Another application of the chain rule \eqref{e:faa.di.bruno} gives
    \[
    \bigg|\frac{\partial^j}{\partial \theta^j}\Gamma_{r,d}(\theta)\bigg|
    =\bigg|\frac{\partial^j}{\partial \theta^j} F_r\bigg(\frac{\theta^2}{d}\bigg)
    \bigg|
    \le \sum_{k_1,k_2}
    \frac{j!}{(k_1)!(k_2)!}
    \bigg|(F_r)^{(k)}\bigg(\frac{\theta^2}{d}\bigg)\bigg|
    \bigg(\frac{2\theta}{d}\bigg)^{k_1}
    \bigg(\frac{1}{d}\bigg)^{k_2}\,,
    \]
where the sum goes over $k_1,k_2\ge0$ with $k_1+2k_2=j$ and $k_1+k_2=k$. It follows that $\lceil j/2 \rceil \le k \le j$, and therefore
    \[
    \bigg|\frac{\partial^j}{\partial \theta^j}\Gamma_{r,d}(\theta)\bigg|
    \le \frac{C_{r,j}}{d^{\lceil j/2 \rceil}}
    \le C_{r,j} \delta^j
    \le C_{r,j} [\delta U(t)]^j\,,
    \]
for all $1\le j\le N$. Altogether, this verifies that the term $\textup{I}_\theta$ satisfies the bounds
\eqref{eq:log-size-bound} and \eqref{eq:log-derivative-bound}.




\paragraph{Term III.} 
Let $A_\theta(t)\equiv \langle t,(H_\theta)^{-1}t\rangle$.
Recall that $\|W\|_\textup{op} \le C_r\delta$ and $\|(H_\theta)^{-1}\|_\textup{op} \le C_r$. As before, let $\lambda_i$ ($1\le i\le r$) denote the eigenvalues of matrix $W$, and now let $(u_1,\ldots,u_r)$ be the corresponding orthonormal eigenbasis. Since the matrices  $(H_\theta)^{-1}$ and $W$
commute, we can express
    \begin{align*}
    \frac{\partial^j}{\partial\theta^j}(H_\theta)^{-1}
    &=\frac{\partial^j}{\partial\theta^j}
    \sum_{i=1}^r \frac{1}{1+\theta\lambda_i}
    (u_i)(u_i)^\top\\
    &=\sum_{i=1}^r (-1)^j j!
    \frac{1}{1+\theta\lambda_i}
    \bigg(\frac{\lambda_i}{1+\theta\lambda_i}
    \bigg)^j (u_i)(u_i)^\top
    = (-1)^jj!
    (H_\theta)^{-1}[W(H_\theta)^{-1}]^j\,.
    \end{align*}
It follows that, for $0\le j\le N$, we have
\[
    \bigg|
    \frac{\partial^j}{\partial \theta^j} A_\theta(t)
    \bigg|
 \le
\bigg\|\frac{\partial^j}{\partial\theta^j}(H_\theta)^{-1}
\bigg\|_{\operatorname{op}}
\|t\|^2
\le C_{r,j}\delta^jU(t)\,. \]
Recall that $\delta \ge d^{-1/2}$. If we define $Z_\theta(t) \equiv \theta^2 A_\theta(t)/d$, then for $U(t) \le c_0/\delta$ we have
    \[|Z_\theta(t)|
    \le \frac{|A_\theta(t)|}{d}
    \le C_r \delta^2 U(t)
    \le C_r c_0\delta \le \frac12\,.\]
Applying the product rule \eqref{e:deriv.product.rule} gives 
    \[
    \bigg|\frac{\partial^j}{\partial\theta^j}
    Z_\theta(t)\bigg|
    = \frac{1}{d} \bigg| \frac{\partial^j}{\partial\theta^j}
    [ \theta^2 A_\theta(t) ] \bigg|
    =\frac1d \bigg| \sum_{\ell=0}^j
    \binom{j}{\ell}
    \frac{\partial^\ell}{\partial\theta^\ell}
    (\theta^2)
    \frac{\partial^{j-\ell}}{\partial\theta^{j-\ell}}
    A_\theta(t)\bigg|
    \le \frac{1}{d} C_{r,j} \delta^{j-2} U(t)
    \le C_{r,j} \delta^j U(t)\,,
    \]
since the only non-zero contributions to the above sum come from $0\le \ell\le 2$. It is straightforward to verify that the estimates of the last two displays are also valid if we replace $Z_\theta(t)$ with $\theta^2 (t_i)^2/d$ for any $1\le i\le r$. From this we deduce
    \[
    |\textup{III}_\theta(t)|\le C_r\delta^2U(t)\le C_r\delta U(t)\,.\]
By the chain rule \eqref{e:faa.di.bruno},
for $1\le j\le N$ we obtain
    \[\bigg|
    \frac{\partial^j}{\partial\theta^j}
    \log (1-Z_\theta(t))\bigg|
    \le C_{r,j} \sum_{\underline{k}}
    \prod_{\ell=1}^j
    (\delta^\ell U(t))^{k_\ell}
    \le C_{r,j} \delta^j
    \sum_{k=1}^j U(t)^k
    \le C_{r,j} [\delta U(t)]^j\,.
    \]
Altogether, this verifies that the term $\textup{III}_\theta$ satisfies the bounds
\eqref{eq:log-size-bound} and \eqref{eq:log-derivative-bound}.

\paragraph{Term II.} Recall that we continue to restrict ourselves to the region $U(t) \le c_0/\delta$. In this regime, the expansion
    \begin{equation}
\textup{II}_\theta(t)
=
-\frac12\left(A_\theta(t)-\|t\|^2\right)
-\sum_{k=2}^\infty 
\frac{1}{2k}
\underbrace{\bigg(\frac{\theta^2}{d}\bigg)^{k-1}
\bigg\{A_\theta(t)^k
- \sum_{i=1}^r (t_i)^{2k}
\bigg\}}_{\equiv T_{k,\theta}(t)}.
\label{eq:II}
\end{equation}
is uniformly convergent. For the first term, we note that
    \[
\bigg|A_\theta(t)-\sum_{i=1}^r (t_i)^2\bigg|
= \Big| \langle t,
[(H_\theta)^{-1}-I]t \rangle \Big|
\le C_r \delta U(t)\,.
\]
Moreover, we recall from the preceding calculations (for the $\textup{III}_\theta$ term) that
    \[\bigg|
    \frac{\partial^j}{\partial\theta^j}
    \bigg(
    A_\theta(t)-\sum_{i=1}^r (t_i)^2
    \bigg)\bigg|
    \le C_{r,j} \delta^j U(t) 
    \le C_{r,j} [\delta U(t)]^j\,.\]
Again recalling that $U(t) \le c_0/\delta$, we can bound
    \[
    \sum_{k=2}^\infty |T_{k,\theta}(t)|
    \le\sum_{k=2}^\infty C_r U(t)
    \bigg( \frac{C_r U(t)}{d}\bigg)^{k-1}
    \le \sum_{k=2}^\infty C_r U(t)
    (C_r\delta)^{k-1}
    \le C_r \delta U(t)\,.
    \]
We next proceed to bound the derivatives of $T_{k,\theta}(t)$ (with respect to $\theta$). To this end, we first apply the chain rule \eqref{e:faa.di.bruno} to bound 
    \[
    \bigg|\frac{\partial^\ell}{\partial\theta^\ell}
    [A_\theta(t)^k]\bigg|
    \le \sum_{\underline{q}}\bigg| 
    \frac{\ell!}{q_1! \cdots q_\ell!}
    (k)_q A_\theta(t)^{k-q}
    \prod_{j=1}^\ell
    \bigg(
    \frac{(\partial_\theta)^j A_\theta(t)}{j!}
    \bigg)^{q_j}\bigg|\,,
    \]
where the sum goes over nonnegative tuples $q_1,\ldots,q_\ell$ with $q_1 + \ldots + q_\ell=q$ and $q_1 \cdot 1 + \ldots + q_\ell \cdot \ell = \ell$. Simplifying gives 
    \[\bigg|\frac{\partial^\ell}{\partial\theta^\ell}
    [A_\theta(t)^k]\bigg|
    \le C_{r,\ell} k^\ell 
    \sum_{\underline{q}}
    [C_r U(t)]^{k-q}
    \prod_{j=1}^\ell 
    (C_{r,j} \delta^j U(t))^{q_j}
    \le C_{r,\ell} ({C_r} U(t))^k
    (k \delta)^\ell 
    \]
We conclude that for all $\ell\ge0$,
    \[
    \bigg|\frac{\partial^\ell}{\partial\theta^\ell}
    \bigg( A_\theta(t)^k
    - \sum_{i=1}^r (t_i)^{2k}\bigg)\bigg|
    \le C_{r,\ell} ({C_r} U(t))^k
    (k \delta)^\ell \,.
    \]
Then, applying the product rule \eqref{e:deriv.product.rule} gives 
    \begin{align}\nonumber
    \bigg|
    \frac{\partial^j}{\partial\theta^j}
    T_{k,\theta}(t)\bigg|
    &\le \frac{1}{d^{k-1}}\sum_{\ell=0}^j
    \binom{j}{\ell} \bigg| 
    \frac{\partial^\ell}{\partial\theta^\ell}
    (\theta^{2k-2})
    \cdot
    \frac{\partial^{j-\ell}}
    {\partial\theta^{j-\ell}}
    \bigg(A_\theta(t)^k-\sum_{i=1}^r (t_i)^{2k}
    \bigg) \bigg|  \\
    &\le \frac{C_{r,j}}{d^{k-1}}
    \sum_{\ell=0}^{\min\{j, 2k-2\}}
    k^{\ell} \theta^{2k-2-\ell}
    [C_r U(t)]^k (k\delta)^{j-\ell} \nonumber \\
    &\le \frac{C_{r,j} ({C_r} U(t))^k k^{j}
        \delta^{\max\{0,j-2k+2\}} }{d^{k-1}}
    \le C_{r,j} [C_r U(t)]^k k^j
         \delta^{\max\{2(k-1),j\}}\,.
    \label{eq:Tk-derivative-term}
    \end{align}
We now sum \eqref{eq:Tk-derivative-term} over $k\ge2$: for $k\le j$, we can simply bound
    \[\bigg|
    \frac{\partial^j}{\partial\theta^j}
    T_{k,\theta}(t)\bigg|
    \le C_{r,j} [C_r U(t)]^k \delta^j
    \le C_{r,j} [\delta U(t)]^j\,.
    \]
We bound the sum over $k >j$ by
    \[\sum_{k=j+1}^\infty
    \bigg|\frac{\partial^j}{\partial\theta^j}
    T_{k,\theta}(t)\bigg|
    \le C_{r,j}
    \sum_{k=j+1}^\infty
    [C_r U(t)]^k k^j \delta^{2(k-1)}
    \le C_{r,j} [C_r U(t)] [C_r U(t)\delta^2]^j
    \le C_{r,j} [\delta U(t)]^j\,.
    \]
Altogether we conclude
\[
\sum_{k\ge2}\bigg|
\frac{\partial^j}{\partial \theta^j} T_{k,\theta}(t)\bigg|
\le
C_{r,j}[\delta U(t)]^j\,.
\]
Combining with the calculations for the first term in \eqref{eq:II} gives $|\textup{II}_\theta(t)|\le C_r\delta U(t)$, and
\[
\bigg|\frac{\partial^j}{\partial\theta^j} \textup{II}_\theta(t)
\bigg|
\le C_{r,j}[\delta U(t)]^j
\]
for all $1\le j\le N$. Altogether, we have now verified that the terms 
$\textup{I}_\theta$, $\textup{II}_\theta$, and $\textup{III}_\theta$
from the decomposition~\prettyref{eq:logL-rewritten} all satisfy the bounds
\eqref{eq:log-size-bound} and \eqref{eq:log-derivative-bound}. This concludes the proof.
\end{proof}

\begin{lemma}\label{l:cap-tail-moment}
Recall that $T$ is equidistributed as $d^{1/2} x_1$, where $x$ is sampled uniformly at random from $\mathbb{S}^{d-1}$.
If $1 \le a^2 \le d/4$, 
then we have
\begin{equation}
\label{eq:cap-tail-moment}
\Expect\Big[
|K(T)|(1+T^2)^m e^{\lambda T^2}\Big]
\le
C_m p a^{2m}
\end{equation}
for all $\lambda\le c/a^2$, and for each fixed
integer $m\ge0$.
\end{lemma}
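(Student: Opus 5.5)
We need to bound $\E[|K(T)|(1+T^2)^m e^{\lambda T^2}]$ where $|K(T)| = (1-p)\mathbf 1\{T\ge a\} + p\mathbf 1\{T<a\}$. The plan is to split the expectation according to $T<a$ and $T\ge a$.

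\textbf{Region $T<a$.} Here $|K(T)|=p$, so the contribution is at most
\[
p\,\E[(1+T^2)^m e^{\lambda T^2}; T<a].
\]
On $\{|T|\le a\}$ we have $e^{\lambda T^2}\le e^{c}$ and $(1+T^2)^m\le (2a^2)^m$, using $a\ge1$; this gives $O_m(pa^{2m})$. For $T\le -a$ we use the subgaussian tail of $T$ from Lemma~\ref{lem:beta.mgf}. Since $\lambda\le c/a^2\le c$ with $c$ small, say $c<1/4$, the moment $\E[(1+T^2)^m e^{\lambda T^2}]$ is $O_m(1)$, which is at most $O_m(a^{2m})$. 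So the whole region contributes $O_m(pa^{2m})$.

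\textbf{Region $T\ge a$.} Here $|K|\le1$, and we need
\[
\int_a^{\sqrt d}(1+t^2)^m e^{\lambda t^2} f_d(t)\,dt\lesssim_m p\,a^{2m}.
\]
This requires two estimates.

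First, a lower bound on $p$. Exactly as in the proof of Lemma~\ref{lem:eta2-moderate-overlap}, for $t\in[a,a+1/a]$ we have $1-t^2/d\ge 1-(a^2+3)/d$, and hence $f_d(t)\ge c' f_d(a)$. This step needs the ratio $\big((1-(a^2+3)/d)/(1-a^2/d)\big)^{(d-3)/2}$ to be bounded below. That ratio is $\exp(-O(d\cdot 3/(d-a^2)))$, which is bounded below because $a^2\le d/4$. Therefore
\[
p\ge c' f_d(a)/a. \qquad\text{(this is the display (eq:p-lb) cited earlier)}
\]

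Second, a sub-exponential decay bound for the density above $a$. For $t\ge a$,
\[
\frac{f_d(t)}{f_d(a)}=\Big(\frac{1-t^2/d}{1-a^2/d}\Big)^{(d-3)/2}\le \exp\Big(-\frac{d-3}{2}\cdot\frac{t^2-a^2}{d}\Big)\le \exp\big(-c''(t^2-a^2)\big),
\]
using $\log(1-x)-\log(1-y)\le -(x-y)$ for $x\ge y$, together with $d\ge4$ (guaranteed since $a^2\ge1$ and $a^2\le d/4$). Write $t=a+s$. Then $t^2-a^2\ge 2as$, and $\lambda t^2\le \lambda a^2+\lambda(2as+s^2)\le c+c(2s/a+s^2/a^2)$. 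With $c$ small relative to $c''$, the exponent is at most $-c''as$ for $s\le a$. For $s>a$, $t^2-a^2\ge s^2$, and the Gaussian factor dominates $e^{\lambda t^2}$ with $\lambda\le c$. Also $(1+t^2)^m\lesssim_m a^{2m}+s^{2m}$. Hence
\[
\int_a^{\sqrt d}\cdots\lesssim f_d(a)\Big(\int_0^\infty (a^{2m}+s^{2m})e^{-c''as}\,ds+\int_a^\infty s^{2m}e^{-c''s^2/2}\,ds\Big)\lesssim_m f_d(a)\,a^{2m-1}.
\]
Combining with $f_d(a)\lesssim ap$ gives $O_m(pa^{2m})$.

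\textbf{Main obstacle.} The delicate step is the second estimate: verifying the density-ratio decay uniformly up to $t=\sqrt d$, and making sure the tilt $e^{\lambda t^2}$ with $\lambda\le c/a^2$ does not overwhelm the $e^{-c''as}$ decay. The choice $\lambda\le c/a^2$ is exactly what keeps the tilt bounded on the scale $s\lesssim a$ that matters, so I will fix $c$ small relative to $c''$ at that point.
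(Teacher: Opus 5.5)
Your proposal is correct and follows essentially the paper's route: the $p$-part of $|K|$ is controlled by the subgaussian moment bound, and the $\{T\ge a\}$ part by combining the window lower bound $p\gtrsim f_d(a)/a$ with the density-ratio decay $f_d(t)/f_d(a)\le e^{-(t^2-a^2)/8}$, which gives $f_d(a)\,a^{2m-1}$. The differences are cosmetic: the paper writes $e^{\lambda t^2}=e^{\lambda a^2}e^{\lambda(t^2-a^2)}$ and substitutes $y=t^2-a^2$ with $\mathrm{d}t\le \mathrm{d}y/(2a)$, avoiding your case split at $s=a$; it also uses a window $[a,a+c_1/a]$ with $c_1$ small, which sidesteps the degenerate case $d=4$, $a=1$, where your pointwise bound on $[a,a+1/a]$ fails at the endpoint (harmlessly, since $p$ is then a constant).
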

\begin{proof}
We have $|K(t)|\le \one\{t\ge a\}+p$, so
    \begin{equation}
    \label{e:K.T.power.tail.bound}
    \Expect
    \Big[
    |K(T)|(1+T^2)^m e^{\lambda T^2}\Big]
    \le
    p \bigg(\Expect
    \Big[(1+T^2)^m e^{\lambda T^2}\Big]
    + 
    \Expect
    \Big[(1+T^2)^m e^{\lambda T^2}
    \,\Big|\, T\ge a\Big]\bigg)\,.
    \end{equation}
For the first term on the above right-hand side, we recall from Lemma~\ref{lem:beta.mgf} that $T$ is $1$-subgaussian, which implies
    \[\Expect
    \Big[(1+T^2)^m e^{\lambda T^2}\Big]
    \le C_m\,.\]
For the second term on the right-hand side of 
\eqref{e:K.T.power.tail.bound}, we will argue that
\begin{equation}
 \Expect\Big[
 (1+T^2)^m e^{\lambda T^2}\,\Big|\,T\ge a\Big] 
 \leq C_m a^{2m}\,,
    \label{eq:cap-tail-moment2}
\end{equation}
which implies the desired bound \prettyref{eq:cap-tail-moment}. Recall that $T$ has density $f_d(t)$ given by \eqref{e:T.density}.
Indeed, the
assumptions $a^2\delta\le c$ and $\delta\ge d^{-1/2}$ imply (loosely) that
$a^2\le d/4$.
Fix $c_1>0$ a small constant.
On the interval
$[a,a+c_1/a]$, we have
$t^2-a^2\le 2c_1+c_1^2$, and
\[
\frac{f_d(t)}{f_d(a)}
=
\left(1-\frac{t^2-a^2}{d-a^2}\right)^{(d-3)/2}
\ge
\exp\left(
-\frac{d(t^2-a^2)}{d-a^2}
\right)
\ge c_0\,,
\] 
from which it follows that
\begin{equation}
p=\int_a^{d^{1/2}}f_d(t)\,\dd t
\ge
\int_a^{a+c_1/a}f_d(t)\,\dd t
\ge c_0 c_1 \frac{f_d(a)}{a}
\equiv c_2 \frac{f_d(a)}{a}\,.
    \label{eq:p-lb}
\end{equation}
On the other hand, we have $(d-3)/(2(d-a^2))\ge 1/8$, so for all $t \ge a$ we can bound
\[
\frac{f_d(t)}{f_d(a)}
=
\bigg(1-\frac{t^2-a^2}{d-a^2}\bigg)^{(d-3)/2}
\le
\exp\bigg(-\frac{(t^2-a^2)}{8}\bigg)\,.
\]
This allows us to bound
\begin{align*}
\Expect\Big[(1+T^2)^m e^{\lambda T^2}
\,\Big|\, T\ge a\Big]
&=\frac{1}{p}\int_a^{\sqrt d}(1+t^2)^m e^{\lambda t^2}f_d(t)\,\dd t \\
&\le
\frac{f_d(a)}{p} e^{\lambda a^2}
\int_a^\infty (1+t^2)^m e^{-(1/8-\lambda)(t^2-a^2)}\,\dd t.
\end{align*}
Let $y=t^2-a^2$.  Since
$1/8-\lambda\ge 1/8 - c/a^2 \ge 1/16$ and
$\dd t=\dd y/(2\sqrt{a^2+y})\le \dd y/(2a)$, the above can be upper bounded by
\begin{align*}
\frac{f_d(a)}{p} e^{\lambda a^2}
\frac1{2a}
\int_0^\infty (1+a^2+y)^m e^{-y/16}\,\dd y
&\le\frac{f_d(a)}{p} e^{\lambda a^2}
 (2a)^{2m-1}
\int_0^\infty (1+y)^m e^{-y/16}\,\dd y \\
&\le
\frac{f_d(a)}{p} e^{\lambda a^2}
C_m a^{2m-1}
\le C_m a^{2m}\,,
\end{align*}
where the last step uses the lower bound 
\prettyref{eq:p-lb} together with the assumption $\lambda \le c/a^2$.
This proves the claim \prettyref{eq:cap-tail-moment2}, and as we noted above the desired bound \prettyref{eq:cap-tail-moment} follows. 
\end{proof}

\begin{lemma}[local Taylor derivative bound]
\label{lmm:taylor-derivative}
Fix $r\ge2$ and $N\ge1$.  There exist constants $c_0,c,C>0$, depending
only on $r$ and $N$, such that the following holds.  If $1 \leq a^2 \leq d/4$,
then
\begin{equation}
\label{eq:derivative-integrated}
\int_{U(t)\le c_0/\delta}
\bigg\{ \prod_{i=1}^r |K(t_i)|\bigg\}
\sup_{0\le\theta\le1}
\bigg|
\frac{\partial^N}{\partial\theta^N}L_\theta(t)
\bigg|
\nu^{\otimes r}(\dd t)
\le
C p^r (a^2\delta)^N \,.
\end{equation}
\end{lemma}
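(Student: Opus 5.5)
The plan is to write $L_\theta=e^{\Lambda_\theta}$ and control the $N$th $\theta$-derivative pointwise on the region $\{U(t)\le c_0/\delta\}$ using Lemma~\ref{l:log.likelihood.local.region.bounds}, and then integrate against $\prod_i|K(t_i)|\,\nu(\dd t_i)$ using Lemma~\ref{l:cap-tail-moment}.

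\textbf{Pointwise bound.} By Fa\`a di Bruno's formula \eqref{e:faa.di.bruno} applied to $\exp(\Lambda_\theta)$,
\[
\frac{\partial^N}{\partial\theta^N}L_\theta(t)
=L_\theta(t)\sum_{\underline{k}}\frac{N!}{k_1!\cdots k_N!}
\prod_{j=1}^N\bigg(\frac{\partial_\theta^j\Lambda_\theta(t)}{j!}\bigg)^{k_j},
\]
where the sum is over tuples with $\sum_j jk_j=N$. By \eqref{eq:log-derivative-bound}, each product is at most $C_{r,N}\prod_j[\delta U(t)]^{jk_j}=C_{r,N}[\delta U(t)]^N$. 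By \eqref{eq:log-size-bound}, $L_\theta(t)\le\exp(C_r\delta U(t))$. Hence, uniformly in $\theta\in[0,1]$ on the region,
\[
\sup_{\theta}\big|\partial^N_\theta L_\theta(t)\big|\le C_{r,N}\,\delta^N U(t)^N\exp\big(C_r\delta U(t)\big).
\]

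\textbf{Integration.} Write $U(t)=1+\sum_i t_i^2$ and use $U(t)^N\le C_{r,N}\prod_{i=1}^r(1+t_i^2)^N$. The exponential factorizes as $e^{C_r\delta}\prod_i e^{C_r\delta t_i^2}$. Dropping the restriction $U\le c_0/\delta$ only enlarges the integral, so it factorizes into
\[
C\delta^N\prod_{i=1}^r\Expect\Big[|K(T)|(1+T^2)^N e^{\lambda T^2}\Big],\qquad \lambda=C_r\delta.
\]
To apply Lemma~\ref{l:cap-tail-moment} we need $\lambda\le c/a^2$, i.e.\ $a^2\delta\le c/C_r$. This is the natural standing assumption, and it is the one used in Proposition~\ref{prop:generic-residual}. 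We also need $1\le a^2\le d/4$, which is given. The lemma then bounds each factor by $C_Np\,a^{2N}$.

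\textbf{Where the main difficulty lies.} The naive count gives $C\delta^N(pa^{2N})^r=Cp^r\delta^N a^{2Nr}$, which carries too many powers of $a$: we want only $(a^2\delta)^N$. So the distribution of the power $U^N$ must be handled more carefully. Expand
\[
U(t)^N=\Big(1+\sum_i t_i^2\Big)^N
\]
as a sum of monomials $\prod_i t_i^{2m_i}$ with $\sum_i m_i\le N$. Each such monomial integrates to at most
\[
\prod_i C\,p\,a^{2m_i}\le C\,p^r a^{2N},
\]
using Lemma~\ref{l:cap-tail-moment} with exponent $m_i$ in each coordinate and $a\ge1$. This yields $Cp^r(a^2\delta)^N$, as claimed.

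The expected obstacle, then, is not the calculus but exactly this bookkeeping. First, the tilt $e^{C\delta U}$ must be kept within the range $\lambda\lesssim1/a^2$ allowed by Lemma~\ref{l:cap-tail-moment}. Second, the polynomial weight must be split across coordinates so that the total power of $a^2$ matches the total degree $N$.
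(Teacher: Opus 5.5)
Your argument follows the paper's proof of this lemma in structure. You apply Fa\`a di Bruno to $L_\theta=e^{\Lambda_\theta}$ with the bounds of Lemma~\ref{l:log.likelihood.local.region.bounds} to get $|\partial_\theta^N L_\theta(t)|\lesssim e^{C_r\delta U(t)}[\delta U(t)]^N$. You then split $U(t)^N$ into products $\prod_i(1+t_i^2)^{m_i}$ with $\sum_i m_i=N$ and apply Lemma~\ref{l:cap-tail-moment} coordinatewise, so the powers of $a^2$ total exactly $N$.

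The one place you diverge is unnecessary and, as written, costs you a hypothesis. You keep the factor $e^{C_r\delta U(t)}$ and absorb it as a tilt $\lambda=C_r\delta$. That forces $a^2\delta\le c/C_r$, which is not among the lemma's hypotheses (only $1\le a^2\le d/4$ is assumed). The paper instead uses the integration region itself: on $\{U(t)\le c_0/\delta\}$ one has $e^{C_r\delta U(t)}\le e^{C_r c_0}=O(1)$. The tilt then disappears, and Lemma~\ref{l:cap-tail-moment} is only needed with $\lambda=0$.

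With that one-line change your proof gives the lemma as stated. As you noted, your current version already suffices for the lemma's use in Proposition~\ref{prop:generic-residual}, where $a^2\delta\le c$ is assumed.
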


\begin{proof}
Recall the bounds for $\Lambda_\theta$ from Lemma~\ref{l:log.likelihood.local.region.bounds}. We now convert these to bounds for $L_\theta=\exp(\Lambda_\theta)$: by the chain rule \eqref{e:faa.di.bruno}, for $1\le m\le N$,
\[\bigg|
\frac{\partial^m}{\partial\theta^m}
L_\theta(t)\bigg|
=\bigg|
L_\theta(t)
\sum_{\underline{\alpha}}
\frac{m!}{\alpha_1! \cdots \alpha_m!}
\prod_{q=1}^m
\bigg(
\frac{(\partial_\theta)^q\Lambda_\theta(t)}{q!}
\bigg)^{\alpha_q}\bigg|\,,
\]
where the sum goes over non-negative tuples $\underline{\alpha}=(\alpha_1,\ldots,\alpha_m)$
with $\alpha_1 + 2 \alpha_2 + \ldots + m \alpha_m=m$. Applying Lemma~\ref{l:log.likelihood.local.region.bounds} gives
\begin{equation}\bigg|
\frac{\partial^m}{\partial\theta^m}
L_\theta(t)\bigg|
\le
C_m L_\theta(t)
\sum_{\underline{\alpha}} 
\prod_{q=1}^m \Big( C_{r,q}[\delta U(t)]^q\Big)^{\alpha_q}
\le C_{r,m}
\exp\{ C_r \delta U(t)\}
[\delta U(t)]^m
\le C_{r,m} [\delta U(t)]^m\,.
\label{eq:L-derivative-pointwise}
\end{equation}
for $1\le m \le N$ and $U(t) \le c_0/\delta$.


It remains to integrate \eqref{eq:L-derivative-pointwise}. To this end, note that
\[
U(t)^N
\le
C_{r,N}\sum_{\underline{m}}
\prod_{i=1}^r(1+t_i^2)^{m_i}] \, ,
\] where the sum goes over nonnegative tuples $(m_1,\ldots,m_r)$ with $m_1 + \ldots + m_r = N$. Then, combining with \eqref{eq:L-derivative-pointwise}, the left-hand side of \eqref{eq:derivative-integrated} is
    \begin{align*}
    &\le C_{r,N} \delta^N 
    \sum_{\underline{m}}
    \int_{U(t) \le c_0/\delta}
    \bigg\{\prod_{i=1}^r |K(t_i)|
    (1+(t_i)^2)^{m_i}
    \bigg\}  \nu^{\otimes r}(\dd t)\\
    &\le C_{r,N} \delta^N
    \sum_{\underline{m}}
    \prod_{i=1}^r \E[|K(T)|(1+T^2)^{m_i}]
    \le C_{r,N} p^r (\delta a^2)^N\,,
    \end{align*}
where the last bound is by Lemma~\ref{l:cap-tail-moment}. This proves the claimed bound 
\eqref{eq:derivative-integrated}.
\end{proof}

To curb the tail contribution, we also need a global control on the Taylor polynomial without requiring $U(t) \leq c_0/\delta$.
\begin{lemma}
\label{lmm:taylor-polynomial-growth}
Fix $r\ge2$ and $M\ge1$.  There are constants $c,C_{r,M}>0$ such that,
if $\delta\le c$, then
\[
|L_M(t)|\le C_{r,M}U(t)^M
\]
for all $t$, where we recall that $L_M$ is defined by
\eqref{eq:LM-def}.
\end{lemma}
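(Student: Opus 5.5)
The plan is to show that each Taylor coefficient $\frac{1}{j!}\partial_\theta^j L_\theta(t)|_{\theta=0}$, for $0\le j\le M$, is a polynomial in $t$ of total degree at most $2j$. Its coefficients should be bounded by $C_{r,j}$ uniformly in $d$ and in $W$ with $\delta\le c$. Once this is in place, the bound $|L_M(t)|\le \sum_{j\le M} C_{r,j}(1+\|t\|^2)^{j}\le C_{r,M}U(t)^M$ follows immediately, since $U(t)\ge 1$.

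To get the polynomial structure, I would work with $\Lambda_\theta=\log L_\theta$ in the decomposition \eqref{eq:logL-explicit} and compute its Taylor coefficients at $\theta=0$. Term I does not depend on $t$. Its $j$-th derivative at $0$ is $-\tfrac12(-1)^{j-1}(j-1)!\tr(W^j)$ plus the derivatives of $\Gamma_{r,d}$. Both are bounded: the first because $\|W\|_{\rm op}\le C_r\delta$, the second by the estimate on $\partial_\theta^j\Gamma_{r,d}$ from the proof of Lemma~\ref{l:log.likelihood.local.region.bounds}. For Terms II and III I would expand the logarithms and $(H_\theta)^{-1}=\sum_\ell(-\theta W)^\ell$ as formal power series in $\theta$. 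Term III contributes $\theta^{2k}$ times a polynomial of degree $2k$ with coefficients $O(d^{-k})$. In Term II, the $k$-th summand $T_{k,\theta}$ of \eqref{eq:II} carries the prefactor $(\theta^2/d)^{k-1}$ and is a polynomial of degree $2k$ in $t$; the leading part $-\tfrac12(A_\theta-\|t\|^2)$ contributes $\theta^\ell\langle t,W^\ell t\rangle$, of degree $2\le 2\ell$. So the $\theta^j$ coefficient of $\Lambda_\theta$ is a polynomial in $t$ of degree at most $2j$. Indeed, the $\theta^{j}$ terms in $T_{k,\theta}$ require $2(k-1)\le j$, so their degree is $2k\le j+2\le 2j$ when $j\ge 2$, and for $j=1$ only the $\langle t,Wt\rangle$ term appears. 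The coefficients are bounded by $C_{r,j}$ because $|W_{ik}|\le\delta\le c$ and $d\ge1$.

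Next I would exponentiate. Since $\Lambda_0=0$, the degree-$j$ Taylor coefficient of $L_\theta=e^{\Lambda_\theta}$ is, by Fa\`a di Bruno \eqref{e:faa.di.bruno}, a finite sum of products of coefficients $\lambda_{q}$ of $\Lambda$ with $\sum q\alpha_q=j$. Each such product has degree $\le\sum 2q\alpha_q=2j$ and bounded coefficients. This gives the polynomial claim, and the bound on $L_M$ follows as above.

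The main obstacle is justifying that the formal power series manipulations agree with the actual $\theta$-derivatives of $L_\theta$ at $\theta=0$ for \emph{all} $t$, not just on $U(t)\le c_0/\delta$. I would handle this with an analyticity argument. For fixed $t$ and small complex $\theta$, say $|\theta|\le \rho(t)$ with $\rho$ chosen so that $\theta^2\|t\|^2/d\le 1/4$ and $|\theta|\,\|W\|_{\rm op}\le 1/2$, the function $\Lambda_\theta$ is analytic in $\theta$. This is because $\log\Gamma$ is analytic at large arguments, and Stirling's expansion gives analyticity in $s=\theta^2/d$ near $0$ once one checks that the $1/z$ corrections are analytic in $s$. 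Hence the Taylor coefficients at $0$ are exactly the coefficients of the convergent series. Polynomiality in $t$ then follows because each coefficient is extracted from a series whose $\theta^j$ term collects only finitely many contributions, namely those with $k\le j/2+1$. In particular, no restriction on $t$ is needed, since the derivatives at $\theta=0$ depend only on an arbitrarily small neighbourhood of $\theta=0$.
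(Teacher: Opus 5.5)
Your argument is the paper's: you show that each Taylor coefficient $Q_j$ is a polynomial in $t$ of degree at most $2j$ with bounded coefficients. You do this by going through Terms I, II, III of \eqref{eq:logL-rewritten}, using the $2(k-1)\le j$ count for $T_{k,\theta}$ and the $2k\le j$ count for Term III, and then exponentiating with Fa\`a di Bruno. This matches the paper's proof essentially step for step.

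The one step that fails as written is the analyticity claim in your last paragraph. The map $\theta\mapsto\Gamma_{r,d}(\theta)$, and hence $\Lambda_\theta$, is \emph{not} analytic at $\theta=0$, because the Stirling series for $\log\Gamma$ is only asymptotic, not convergent. Concretely, take $\theta=\pm i\sqrt{d/(2m)}$ with $m=1,2,\dots$. Then $D_\theta/2=-m$, which is a pole of $\Gamma(D_\theta/2)^{r-1}$ in the denominator of \eqref{e:rewrite.gamma.factors}. The numerator $\Gamma(-m-1/2)^r$ is finite and nonzero there, so nothing cancels the pole. Hence $\Gamma_{r,d}$ has logarithmic singularities accumulating at $\theta=0$; equivalently, $F_r(s)$ has branch points at $s=-1/(2m)$. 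So "the Taylor coefficients are the coefficients of the convergent series" cannot be justified this way for $\Lambda_\theta$ as a whole.

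The repair is easy, and it is what the paper effectively does by using a finite expansion with remainder for the Gamma factor:
\begin{itemize}
\item Terms II and III do have the analyticity you describe for $|\theta|<\rho(t)$.
\item Term I does not depend on $t$. It is $C^\infty$ in real $\theta$ near $0$, by the asymptotic expansion \eqref{e:shifted.log.gamma.expansion} with differentiable remainder. Its derivatives at $0$ are controlled by the bounds you already cite.
\end{itemize}
Thus $\Lambda_\theta(t)$ is $C^\infty$ near $\theta=0$ for each fixed $t$, and its Taylor coefficients are the termwise sums. Fa\`a di Bruno then applies to $e^{\Lambda_\theta}$ with no analyticity needed. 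With that replacement your proof goes through.
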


\begin{proof}
Recall from \eqref{eq:LM-def} that
    \[L_M(t)
    = \sum_{j=0}^M
    \frac1{j!}
\left.
\frac{\partial^j}{\partial\theta^j}L_\theta(t)
\right|_{\theta=0}
    \equiv 
    \sum_{j=0}^M
    Q_j(t)\,.\]
We claim that for $0\le j\le M$, the $\theta^j$-coefficient $Q_j(t)$
is a polynomial in $t$ of degree at most $2j$, whose coefficients are polynomials 
in $W$ and $d^{-1}$, with
numerical coefficients depending only on $r$ and $j$.  
This implies
$|Q_j(t)|\le C_{r,j} U(t)^j$, and the assertion follows.

It remains to justify the polynomial-coefficient claim. We address separately each of the terms in the  decomposition \eqref{eq:logL-rewritten}:
\begin{itemize}
\item First consider $\textup{I}_\theta$, which does not depend on $t$.  Recall that $H_\theta=I+\theta W$.  The determinant part satisfies
\[
\log\det H_\theta
=
\tr\log(I+\theta W)
=
\sum_{n=1}^{M}(-1)^{n+1}\frac{\tr(W^n)}{n}
\theta^n
+O_{r,M}(\theta^{M+1}),
\]
with Taylor coefficients being polynomial in $W$.
For the normalizing factor, it follows from  \prettyref{eq:Gamma}, along with the expansion of $H_u(z)$ given in Lemma~\ref{l:log.likelihood.local.region.bounds}, that
\[
\Gamma_{r,d}(\theta)
=
\sum_{m=1}^{M}
\bigg(\frac{2\theta^2}{d}\bigg)^m
\bigg\{
ra_m(1/2)-a_m(r/2)\bigg\}
+O_{r,M}\left(\frac{\theta^{2M+2}}{d^{M+1}}\right)\,.
\]
Thus the $\theta^j$-coefficient of $\textup{I}_\theta$ is a polynomial in $W$ and $d^{-1}$.

\item 
Next consider $\textup{II}_\theta$. Recall \prettyref{eq:II}, namely,
\[
\textup{II}_\theta(t)
=
-\frac12\bigg(A_\theta(t)-\sum_i t_i^2\bigg)
-\sum_{k\ge2}
\frac{1}{2k}
\bigg(\frac{\theta^2}{d}\bigg)^{k-1}
\bigg\{A_\theta(t)^k-\sum_i t_i^{2k}\bigg\}\,,\]
where we recall that
    \[A_\theta(t)=\langle t,(H_\theta)^{-1}t\rangle
    = \sum_{n\ge0} (-\theta)^n
    \langle t,W^n t \rangle
    \,.\]
In particular, the $\theta^j$-coefficient of $A_\theta(t)$ is a quadratic polynomial in $t$, whose coefficients with respect to $t$ are polynomial in $W$.
Consequently, the first term in $\textup{II}_\theta$ has $\theta^j$-coefficients
that are quadratic in $t$, hence of degree at most $2j$ for $j\ge1$. For the term in 
$\textup{II}_\theta$ involving the sum over $k\ge2$, note that a contribution to the
$\theta^j$-coefficient requires $2k-2\le j$.  Then necessarily $j\ge2$, and
$2k\le j+2\le2j$.  Since the factor in braces has degree at most $2k$ in
$t$, hence at most $2j$, and its coefficients with respect to $t$
are polynomials in $W$ and $d^{-1}$.

\item 
Finally consider $\textup{III}_\theta$.  Unlike $\textup{II}_\theta$, there is no prefactor
$d/\theta^2$.  Expanding the logarithms directly,
\begin{align*}
\textup{III}_\theta(t)
&=
\frac{r+2}{2}\sum_{k\ge1}\frac1k
\left(\frac{\theta^2}{d}\right)^k A_\theta(t)^k
-\frac32\sum_{i=1}^r\sum_{k\ge1}\frac1k
\left(\frac{\theta^2}{d}\right)^k t_i^{2k}.
\end{align*}
A contribution to the
$\theta^j$-coefficient therefore requires $2k\le j$.  Its degree in $t$ is at
most $2k$, hence at most $j$, and its coefficients are polynomials in $W$ and $d^{-1}$.

\end{itemize}
Combining the three parts, we conclude that the $\theta^j$ coefficient of $\Lambda_\theta(t)$ is 
\[
    \lambda_j(t)
    := \frac{1}{j!} 
    \frac{\partial^j}{\partial\theta^j}
    \Lambda_\theta(t)\bigg|_{\theta=0}\,,
    \]
where $\lambda_j(t)$ is a polynomial in $t$ of degree at most $2j$, and its
coefficients with respect to $t$ are polynomials in $W$ and $d^{-1}$. Applying the chain rule \eqref{e:faa.di.bruno} gives
    \[
    Q_j(t)
    = \sum_{\underline{k}}
    \frac{j!}{k_1!\cdots k_j!}
    \prod_{s=1}^j \lambda_s(t)^{k_s}\,,
    \]
where the sum goes over nonnegative tuples $(k_1,\ldots,k_j)$ with $k_1 + 2 k_2 + \ldots + jk_j=j$. This implies the polynomial coefficient claim for $Q_j(t)$. \end{proof}


\begin{lemma}[tail contribution]
\label{lmm:tail-contribution}
Fix $r\ge2$ and $M\ge1$.  With $c_0$ as in
\prettyref{lmm:taylor-derivative}, there exist constants $c,C>0$,
depending only on $r$ and $M$, such that if
$a\geq 1$ and $a^2\delta\le c$,
then
\begin{equation}
\label{eq:tail-contribution-exponential}
\int_{\{U(t)>c_0/\delta\}}
\bigg\{\prod_{i=1}^r |K(t_i)|\bigg\}
\Big\{
L(t)+|L_M(t)|
\Big\}
\nu^{\otimes r}(\dd t)
\le
C\exp\{-c/\delta\}.
\end{equation}
\end{lemma}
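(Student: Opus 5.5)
We split the integrand into the $L$ part and the $|L_M|$ part and bound each by a Gaussian-type tail estimate on the event $\{U(t)>c_0/\delta\}$. We only use the crude bound $|K(t_i)|\le 1$, since the target $C\exp\{-c/\delta\}$ has no factor of $p$.

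For the $L$ part, note that $L\,\nu^{\otimes r}=\nu_H$ is the law of $(T_1,\ldots,T_r)$ with $T_i=\sqrt d\langle x_i,x\rangle$. Hence
\[
\int_{\{U>c_0/\delta\}}\prod_i|K(t_i)|\,L\,\dd\nu^{\otimes r}\le \nu_H\big(U(T)>c_0/\delta\big)\le \sum_{i=1}^r \Pbb\Big(T_i^2>\frac{c_0}{r\delta}-1\Big).
\]
Each $T_i$ is marginally distributed as $T$, which is $1$-subgaussian by Lemma~\ref{lem:beta.mgf}. Therefore each term is at most $2\exp(-c_0/(2r\delta)+1/2)$. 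Once $\delta\le c$ is small, this is $\le C\exp(-c'/\delta)$. This part needs neither the Taylor structure nor the condition on $a$.

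For the $|L_M|$ part, Lemma~\ref{lmm:taylor-polynomial-growth} applies because $\delta\le a^2\delta\le c$ (using $a\ge1$). It gives $|L_M(t)|\le C_{r,M}U(t)^M$, so the integral is at most
\[
C_{r,M}\,\E_{\nu^{\otimes r}}\big[U(T)^M\mathbf 1\{U(T)>c_0/\delta\}\big],
\]
where the $T_i$ are now i.i.d.\ copies of $T$. We bound $U^M\mathbf 1\{U>s\}\le U^M e^{\lambda(U-s)}$ with a small constant $\lambda$, say $\lambda=1/(4r)$. This yields
\[
e^{-\lambda s}\,\E\big[U^M e^{\lambda U}\big].
\]
By independence and $1$-subgaussianity, $\E[e^{2\lambda T^2}]\le C$ for $\lambda<1/4$. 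Bounding $U^M\le C_{r,M}e^{\lambda U}$ then gives $\E[U^Me^{\lambda U}]\le C_{r,M}\,\E[e^{2\lambda U}]=C_{r,M}e^{2\lambda}\big(\E[e^{2\lambda T^2}]\big)^r\le C$. With $s=c_0/\delta$, this gives a bound $C\exp(-\lambda c_0/\delta)$. Alternatively, the moment bound of Lemma~\ref{l:cap-tail-moment} with $m=0$ gives the same control, and that route uses $a^2\le d/4$, which follows from $a^2\delta\le c$ and $\delta\ge d^{-1/2}$.

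Adding the two parts yields \eqref{eq:tail-contribution-exponential}, with $c=\min(c',\lambda c_0)$ after shrinking the constant. The only delicate points are checking that the hypothesis $\delta\le c$ of Lemma~\ref{lmm:taylor-polynomial-growth} is implied by $a^2\delta\le c$, and that the subgaussian exponential moment $\E[e^{2\lambda T^2}]$ is finite uniformly in $d$ for the chosen $\lambda$. Both follow from earlier results, so I expect no real obstacle. The main care is keeping all constants depending only on $r$ and $M$.
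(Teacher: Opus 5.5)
Your proof is correct and follows the paper's argument: the $L$ part is bounded via $L\,\nu^{\otimes r}=\nu_H$ and a union bound over the $1$-subgaussian marginals, and the $|L_M|$ part via Lemma~\ref{lmm:taylor-polynomial-growth} plus a polynomially weighted subgaussian tail under $\nu^{\otimes r}$. You justify that tail explicitly with an exponential Chebyshev bound, where the paper simply asserts it. The only inaccuracy is your aside about Lemma~\ref{l:cap-tail-moment}, which restricts to $\lambda\le c/a^2$ and so would not give the same $\exp\{-c/\delta\}$ decay; your main argument does not rely on it.
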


\begin{proof}

Let $U_*=c_0/\delta$. Applying the union bound and the marginal subgaussian tail,
\[
\nu^{\otimes r}(t : U(t) \ge U_*)
\le
\sum_{i=1}^r
\nu\bigg(
t_i : |t_i| \ge \bigg( \frac{U_*-1}{r}\bigg)^{1/2}
\bigg)
\le
C_r e^{-c_rU_*}\,.
\]
A similar bound holds if we include a polynomial factor $U(t)^M$:
\begin{equation}
\label{eq:product-tail-large-U}
\int_{\{U>U_*\}}U(t)^M\,\nu^{\otimes r}(\dd t)
\le
C_{r,M}e^{-c_rU_*}\,.
\end{equation}
Since $|K|\le1$, combining with \prettyref{lmm:taylor-polynomial-growth} gives
\[
\int_{\{U>U_*\}}
\bigg\{ \prod_{i=1}^r
|K(t_i)|\bigg\}
|L_M(t)|\,\nu^{\otimes r}(\dd t)
\le
C_{r,M}e^{-c_rU_*}\,.
\]
Next, recall from \eqref{e:def.L.G} that $L$ is defined as the likelihood ratio between $\nu_H$ and $\nu^{\otimes r}$, where $\nu_H$ is the joint law of $(T_1,\ldots,T_r)$. Under $\nu_H$, each $T_i$ has the same marginal law $\nu$ as under $\nu^{\otimes r}$. A similar union bound thus gives
\begin{align*}
&\int_{\{U>U_*\}}
\bigg\{\prod_{i=1}^r |K(t_i)|
\bigg\}L(t)\,\nu^{\otimes r}(\dd t)
\leq 
\int_{\{U>U_*\}}
 L(t)\,\nu^{\otimes r}(\dd t)
=
\nu_H\{U>U_*\} \\
&\qquad\le \sum_{i=1}^r
\nu\{|T_i|>\sqrt{(U_*-1)/r}\}
\le
C_re^{-c_rU_*}\,,
\end{align*}
thereby proving the claim.
%
\end{proof}

\begin{proof}[Proof of \prettyref{prop:generic-residual}]
Divide $\mathbb{R}^r$ into the regions
\begin{align*}
\Omega_0 &:=\{t:U(t)\le c_0/\delta\}\,,\\
\Omega_1 &:= \{t:U(t)>c_0/\delta\}\,.
\end{align*}
Recalling
\eqref{eq:residual-def}, this gives a corresponding decomposition 
$\mathcal{R}_{r,M}=\mathcal{R}_{r,M}(\Omega_0)
+\mathcal{R}_{r,M}(\Omega_1)$.
On $\Omega_0$, the whole path $\theta\mapsto L_\theta(t)$ is smooth, and
Taylor's theorem gives
\[
L(t)-L_M(t)
=
\frac{1}{M!}
\int_0^1
(1-\theta)^M
\frac{\partial^{M+1}}{\partial\theta^{M+1}}L_\theta(t)\,\dd\theta .
\]
Therefore, the $\Omega_0$-contribution to $\mathcal R_{r,M}$ satisfies
\[
\begin{split}
|\mathcal R_{r,M}(\Omega_0)|
&\le
\frac{1}{[p(1-p)]^{r/2}}
\int_{\Omega_0}
\bigg\{\prod_{i=1}^r |K(t_i)|\bigg\}
|L(t)-L_M(t)|
\nu^{\otimes r}(\dd t)
\\
&\le
\frac{1}{[p(1-p)]^{r/2}}
\int_{\Omega_0}
\bigg\{\prod_{i=1}^r |K(t_i)|\bigg\}
\frac{1}{M!}
\sup_{0\le\theta\le1}
\bigg|
\frac{\partial^{M+1}}{\partial\theta^{M+1}}L_\theta(t)
\bigg|
\nu^{\otimes r}(\dd t).
\end{split}
\]
Applying \prettyref{lmm:taylor-derivative} yields
\[
|\mathcal R_{r,M}(\Omega_0)|
\le
C_{r,M}\bigg(\frac{p}{1-p}\bigg)^{r/2}(a^2\delta)^{M+1}.
\]
For the $\Omega_1$-contribution, 
\prettyref{lmm:tail-contribution} gives
\begin{align*}
|\mathcal R_{r,M}(\Omega_1)|
&\le
\frac{1}{[p(1-p)]^{r/2}}
\int_{\Omega_1}
\bigg\{\prod_{i=1}^r |K(t_i)|\bigg\}
\{L(t)+|L_M(t)|\}
\nu^{\otimes r}(\dd t)\\
& \leq 
\frac{C_{r,M}\exp\{-c/\delta\}}{[p(1-p)]^{r/2}}
\le
C_{r,M}'\left(\frac{p}{1-p}\right)^{r/2}(a^2\delta)^{M+1}.
\end{align*}
where the last step follows because $\frac{1}{\delta} \gtrsim a^2 \gtrsim \log \frac{1}{p}$ (up to constants depending on $r,M$), thanks to the assumption $a^2\delta \leq c$ and \prettyref{eq:p-lb}.
Combining the two pieces completes the proof.
\end{proof}


\section*{Declaration of AI use}
AI assistance was used in developing and drafting the proof of Theorem~2, as
well as in editing the exposition of the paper. All mathematical arguments,
statements, and final text were reviewed, verified, and approved by the authors.

\section*{Acknowledgment}
H.~Du and N.~Sun are supported in part by NSF-Simons collaboration grant DMS-2031883 and NSF grant DMS-2347177.
C.~Mao is supported in part by NSF CAREER Award 2338062.  J.~Xu is supported in part by NSF CAREER Award CCF-2144593. Part of this research was conducted during J.~Xu's visit to the MIT Institute for Data, Systems, and Society in Spring 2026, hosted by Victor Chernozhukov.

\bibliographystyle{alpha}
\bibliography{ref,etak_new_refs}

\end{document}